\documentclass[12pt,reqno]{amsart}

\usepackage{amssymb, amsmath, amsfonts, latexsym}
\usepackage{enumerate}

\usepackage{color}
\setlength{\topmargin}{0cm} \setlength{\oddsidemargin}{0cm}
\setlength{\evensidemargin}{0cm} \setlength{\textwidth}{15truecm}
\setlength{\textheight}{22.8truecm}
\usepackage{diagbox}
\newtheorem{thm}{Theorem}[]
\newtheorem{lem}{Lemma}[section]

\newtheorem{rmk}{Remark}[section]
\theoremstyle{definition}

\usepackage{graphicx}
\usepackage{subcaption}
\numberwithin{equation}{section} \theoremstyle{remark}

\title[Product of Ginibre]{\bf Phase transition and precise convergence rate of  spectral radius of product of complex Ginibre}

\author{Y\MakeLowercase{utao} M\MakeLowercase{a} and X\MakeLowercase{ujia} M\MakeLowercase{eng}}
\address{School of Mathematical Sciences $\&$ Laboratory  of Mathematics and Complex Systems of Ministry of Education, Beijing Normal University, 100875 Beijing, China.} 
\email{mayt@bnu.edu.cn, 202321130122@mail.bnu.edu.cn} 

\begin{document}
\maketitle

\begin{abstract} Let $Z_1, \cdots, Z_n$ denote the eigenvalues of the product $\prod_{j=1}^{k_n} \boldsymbol{A}_j$, where $\{\boldsymbol{A}_j\}_{1 \le j \le k_n}$ are independent $n\times n$ complex Ginibre matrices. Define $\alpha = \lim\limits_{n \to \infty} \frac{n}{k_n}$. We prove that $X_n,$ a suitably rescaled version of $\max_{1 \le j \le n} |Z_j|^2,$ converges weakly as follows: to a non-trivial distribution $\Phi_\alpha$ for $\alpha \in (0, +\infty)$, to the Gumbel distribution when $\alpha = +\infty$, and to the standard normal distribution when $\alpha = 0$. This result reveals a phase transition at the boundaries of $\alpha$. Furthermore, we establish the exact rates of convergence in each regime.
\end{abstract} 

{\bf Keywords:} Ginibre ensemble; spectral radius;  Berry-Esseen bound; Wasserstein distance; phase transition.   

{\bf AMS Classification Subjects 2020:} 60G70, 60B20, 15B52

\section{Introduction}
The study of Products of random matrices, dating back to Bellman \cite{bellman} and \cite{FurKesten}, have garnered significant interest in recent research. Ipsen \cite{Ipsen} outlines several applications, including wireless telecommunication, disordered spin chains, the stability of large complex systems, quantum transport in disordered wires, symplectic maps, Hamiltonian mechanics, and quantum chromodynamics at non-zero chemical potential. Research has focused on products from various ensembles, such as Ginibre ensembles (\cite{AkBur, Bordenave, BurdaJW10, Forrester15, Ipsen, JQ17, JQ19, LW15, RS11}), spherical ensembles (\cite{ChangQ17, ChangLQ, Zeng16}), truncated unitary ensembles (\cite{JQ19, LW15}), and rectangular matrices (\cite{BJLN10, IpsenKie14, Tik, Zeng17}). For further information on products of independent random matrices and related topics, readers are referred to \cite{AkIp15, BaiY98, Bordenave, BouLa, Burda13, Crisanti12, GT, Kopel2020, LWW23, LW24, RS11, RRSV15}.

For a given integer $k \ge 1$, let $\boldsymbol{A}_1, \cdots, \boldsymbol{A}_{k}$ be independent and identically distributed (i.i.d.) $n \times n$ complex Ginibre matrices. Denote the eigenvalues of the product $\prod_{j=1}^k \boldsymbol{A}_j$ by $Z_1, \cdots, Z_n$. Akemann and Burda \cite{AkBur} derived the joint density function of $Z_1, \cdots, Z_n$. For fixed $k$, using methods related to determinantal point processes, Burda {\it et al}. \cite{BurdaJW10} and Burda \cite{Burda13} showed that the empirical distribution of $(Z_j / n^{k/2})_{1 \le j \le n}$ converges to a new distribution on the unit disk in the mean sense. G$\ddot{o}$tze and Tikhomirov \cite{GT} extended this result to the case where the matrix entries are i.i.d. random variables satisfying certain moment conditions. Bordenave \cite{Bordenave} and O'Rourke and Soshnikov \cite{RS11} strengthened this convergence to hold almost surely. Jiang and Qi \cite{JQ19} further obtained the limiting distribution for the empirical spectral distributions, in a framework that, for the first time, allows the number of factors $k_n$ to vary with the matrix size $n$.

Regarding the spectral radius, let  
$$\alpha_n = \frac{n}{k_n} \quad \text{and} \quad \alpha = \lim_{n \to \infty} \alpha_n.$$  
Jiang and Qi \cite{JQ17} established that, under appropriate scaling, the spectral radius of the product of Ginibre ensembles converges weakly to the Gumbel distribution, a distribution $\Psi_{\alpha}$, or the standard normal distribution, depending on whether $\alpha = +\infty$, $\alpha \in (0, +\infty)$, or $\alpha = 0$, respectively.  
However, $\Psi_{\alpha}$ itself does not exhibit phase transitions. This naturally leads to the question of how the scaling should be modified to reveal such transitions.

On the other hand, the first author, together with collaborators, has carried out a series of studies on the precise convergence rates of extreme eigenvalues for non-Hermitian random matrices toward the Gumbel distribution. These works analyze the convergence behavior under two metrics: the Berry-Esseen bound and the $W_1$-Wasserstein distance, as detailed in \cite{HuMa25, MaMeng2025, Ma2025c, MaWang25}. It is therefore also of interest to investigate the corresponding convergence rate for the spectral radius of the product of Ginibre ensembles.

To capture the phase transition behavior, we rescale the spectral radius by defining
\[
X_n = b_n^{-1}\left( \sqrt{\alpha_n}\left( \max_{1 \leq j \leq n} \log |Z_j|^2 - k_n \psi(n) \right) - a_n \right),
\]
where
\[
a_n = \sqrt{\log(\alpha_n + 1)} - \frac{\log\left(\sqrt{2\pi} \log\left(\alpha_n + e^{1/\sqrt{2\pi}}\right)\right)}{\sqrt{\log(\alpha_n + e)}}, \quad 
b_n = \frac{1}{\sqrt{\log(\alpha_n + e)}},
\]
and \(\psi(x) = \Gamma'(x)/\Gamma(x)\) is the digamma function, with \(\Gamma\) denoting the Gamma function.

We further define the limiting parameters
\[
a = \sqrt{\log(\alpha + 1)} - \frac{\log\left(\sqrt{2\pi} \log\left(\alpha + e^{1/\sqrt{2\pi}}\right)\right)}{\sqrt{\log(\alpha + e)}}, \quad 
b = \frac{1}{\sqrt{\log(\alpha + e)}},
\]
which correspond to the limits of \(a_n\) and \(b_n\), respectively.

For \(\alpha \in (0, \infty)\), we introduce
\[
v_\alpha(j, x) = \frac{j}{\sqrt{\alpha}} + a + b x,
\]
and define the distribution function via the infinite product:
\[
\Phi_\alpha(x) = \prod_{j=0}^{\infty} \Phi\left(v_\alpha(j, x)\right),
\]
where \(\Phi\) is the cumulative distribution function of the standard normal distribution.

Next, we define
\[
q_1(m, x) = \frac{1}{12\sqrt{\alpha}} \left( 2\alpha\left(v^2_\alpha(m, x) - 1\right) - 3\sqrt{\alpha}(2m + 1)v_\alpha(m, x) + 6m(m + 1) \right),
\]
and
\[
q_2(m, x) = c_1 - c_2 x - \frac{m}{2\alpha^{3/2}},
\]
where, letting \(w(t) = 2t \log t\),
\[
\begin{aligned}
c_1 &:= \frac{\sqrt{\log(\alpha + 1)}}{w(\alpha + 1)} + \frac{2}{ w(\alpha + e^{1/\sqrt{2\pi}})\sqrt{\log(\alpha + e)} } - \frac{\log(\sqrt{2\pi} \log(\alpha + e^{1/\sqrt{2\pi}}))}{w(\alpha + e) \sqrt{\log(\alpha + e)}}, \\
c_2 &:= \frac{1}{2(\alpha + e)\left(\log(\alpha + e)\right)^{3/2}}.
\end{aligned}
\]

We are now in a position to state our main results.

\begin{thm}\label{main}
    Let \( X_n \), \( \alpha \), \( \alpha_n \), \( v_\alpha \), \( q_{1}(m, \cdot) \), \( q_{2}(m, \cdot) \), and \( \Phi_\alpha \) be defined as above.  
    The Berry-Esseen bounds for the three regimes are given as follows:

    \textbf{(1) Case \( \alpha = 0 \).}  
    Let \( \beta := \lim_{n \to \infty} \frac{n^3}{k_n} \). The convergence rate depends on \( \beta \) in the following way:
    
    \begin{itemize}
        \item If \( \beta = +\infty \), then  
        \[
        \sup_{x \in \mathbb{R}} \left| \mathbb{P}(X_n \leq x) - \Phi(x) \right| = \frac{\sqrt{\alpha_n}}{\sqrt{2\pi}} (1 + o(1)).
        \]
        
        \item If \( \beta \in (0, +\infty) \), then  
        \[
        \sup_{x \in \mathbb{R}} \left| \mathbb{P}(X_n \leq x) - \Phi(x) \right| = \frac{(2\sqrt{\beta} + \sqrt{4\beta + 1})(1 + o(1))}{4\sqrt{2\pi e} \, n} \exp\left( \frac{2\sqrt{\beta}}{\sqrt{4\beta + 1} + 2\sqrt{\beta}} \right).
        \]
        
        \item If \( \beta = 0 \), then  
        \[
        \sup_{x \in \mathbb{R}} \left| \mathbb{P}(X_n \leq x) - \Phi(x) \right| = \frac{1 + o(1)}{4\sqrt{2\pi e} \, n}.
        \]
    \end{itemize}

    \textbf{(2) Case \( \alpha \in (0, \infty) \).}  
    Let \( \eta := \lim_{n \to \infty} (\alpha_n - \alpha) n \). The result depends on the asymptotic behavior of \( \eta \):
    
    \begin{itemize}
        \item If \( |\eta| = +\infty \), then  
        \[
        \sup_{x \in \mathbb{R}} \left| \mathbb{P}(X_n \leq x) - \Phi_\alpha(x) \right| = (1 + o(1)) |\alpha_n - \alpha| \sup_{x \in \mathbb{R}} \Phi_\alpha(x) \left| \sum_{m=0}^{\infty} \frac{q_2(m, x)\phi(v_\alpha(m, x))}{\Phi(v_\alpha(m, x))} \right|.
        \]
        
        \item If \( |\eta| \in (0, +\infty) \), then  
        \[
        \sup_{x \in \mathbb{R}} \left| \mathbb{P}(X_n \leq x) - \Phi_\alpha(x) \right| = \frac{1 + o(1)}{n} \sup_{x \in \mathbb{R}} \Phi_\alpha(x) \left| \sum_{m=0}^{\infty} \frac{\phi(v_\alpha(m, x))}{\Phi(v_\alpha(m, x))} \big( q_1(m, x) + \eta q_2(m, x) \big) \right|.
        \]
        
        \item If \( \eta = 0 \), then  
        \[
        \sup_{x \in \mathbb{R}} \left| \mathbb{P}(X_n \leq x) - \Phi_\alpha(x) \right| = \frac{1 + o(1)}{n} \sup_{x \in \mathbb{R}} \Phi_\alpha(x) \left| \sum_{m=0}^{\infty} \frac{q_1(m, x)\phi(v_\alpha(m, x))}{\Phi(v_\alpha(m, x))} \right|.
        \]
    \end{itemize}

    \textbf{(3) Case \( \alpha = +\infty \).}  
    \[
    \sup_{x \in \mathbb{R}} \left| \mathbb{P}(X_n \leq x) - e^{-e^{-x}} \right| = \frac{(\log \log \alpha_n)^2}{2e \log \alpha_n} (1 + o(1)).
    \]
\end{thm}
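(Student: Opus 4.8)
The plan is to start from the explicit description of the eigenvalue moduli of a product of complex Ginibre matrices. By the result of Akemann–Burda, the set $\{|Z_j|^2\}_{1\le j\le n}$ has the same distribution as $\{Y_{1,j}\cdots Y_{k_n,j}\}$ where all $Y_{i,j}$ are independent and $Y_{i,j}\sim \Gamma(j,1)$ for $1\le j\le n$; equivalently, after taking logarithms, $\log|Z_j|^2 \overset{d}{=} \sum_{i=1}^{k_n}\log Y_{i,j}$, and these $n$ sums are independent across $j$. Hence
\[
\mathbb P\Big(\max_{1\le j\le n}\log|Z_j|^2 \le t\Big)=\prod_{j=1}^n \mathbb P\Big(\textstyle\sum_{i=1}^{k_n}\log Y_{i,j}\le t\Big).
\]
For $\alpha=+\infty$ we have $k_n/n\to\infty$, so each factor is a sum of a large number $k_n$ of i.i.d.\ copies of $\log\Gamma(j,1)$ and is amenable to a sharp CLT/Edgeworth expansion: $\sum_i \log Y_{i,j}$ has mean $k_n\psi(j)$ and variance $k_n\psi'(j)$, and after the affine rescaling defining $X_n$ the $j$-th factor becomes $\Phi$ of an argument that, for the relevant range of $j$, is pushed far into the tail. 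The first step is therefore to fix the normalization: show that with $a_n,b_n,\psi(n)$ as defined, $\mathbb P(X_n\le x)=\prod_{j=1}^n F_{n,j}(x)$ where $F_{n,j}(x)=\Phi$ evaluated at a point of the form (roughly) $\sqrt{\log\alpha_n}-(\text{shift depending on }j,x)$ plus a correction of order $k_n^{-1/2}=o$ of everything else.

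The core analytic step is a precise Gaussian tail expansion. Writing $L=\sqrt{\log\alpha_n}$ (large), one uses $1-\Phi(L - s)\sim \frac{1}{\sqrt{2\pi}\,L}e^{-L^2/2}e^{Ls}$ for the dominant term and then $\log\prod_j F_{n,j}(x) = -\sum_j \big(1-F_{n,j}(x)\big)(1+o(1))$; the sum over $j$ is a geometric-type series in $j$ because shifting $j\mapsto j+1$ multiplies $1-\Phi(\cdot)$ by a factor close to $e^{-1/\sqrt{\alpha_n}\cdot L}\to 0$. Summing the series, the leading behavior of $\mathbb P(X_n\le x)$ is $\exp(-e^{-x}(1+o(1)))$, i.e.\ the Gumbel law, and—crucially for the rate—the next-order term is governed by (i) the $k_n^{-1/2}$-Edgeworth correction in each factor and (ii) the discrepancy between the true $a_n,b_n$ and the idealized Gumbel centering, together with the error in approximating the sum over $j$ by a single dominant term. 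The claim is that, in the $\alpha=+\infty$ regime, contribution (ii)—specifically the $j=1$ term and the sub-leading corrections to the Mills-ratio expansion—dominates (i), and it contributes exactly $\frac{(\log\log\alpha_n)^2}{2e\log\alpha_n}$. Concretely, one expands $a_n$ in powers of $1/\sqrt{\log\alpha_n}$: the choice of $a_n$ is rigged to kill the $O(1)$ and $O\big(\frac{\log\log\alpha_n}{\sqrt{\log\alpha_n}}\big)$ discrepancies, leaving a residual of order $\frac{(\log\log\alpha_n)^2}{\log\alpha_n}$, and one tracks the constant $\frac{1}{2e}$ through $\sup_x \phi_{\mathrm{Gumbel}}(x)\cdot(\text{residual coefficient})$ where the supremum of $e^{-x}e^{-e^{-x}}$ equals $1/e$.

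The last step is to turn the pointwise asymptotics into the uniform (Kolmogorov) statement: show the error term $\mathbb P(X_n\le x)-e^{-e^{-x}}$ factors as $e^{-e^{-x}}\cdot e^{-x}\cdot\big(\varepsilon_n + o(\varepsilon_n)\big)$ uniformly on $\mathbb R$, with $\varepsilon_n=\frac{(\log\log\alpha_n)^2}{2\log\alpha_n}$, so that the sup over $x$ picks up the factor $\sup_x e^{-e^{-x}}e^{-x}=1/e$; this requires uniform control of the remainder for $x$ in a growing window (say $|x|\le C\log\log\alpha_n$) plus crude tail bounds outside it, exactly as in the companion papers \cite{HuMa25,MaMeng2025,Ma2025c,MaWang25}. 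The main obstacle I anticipate is the bookkeeping in the tail expansion: one must carry the Mills-ratio expansion of $1-\Phi$ to enough orders, combine it with the $\Gamma(j,1)$-Edgeworth correction uniformly in $j$ (the small-$j$ terms, where $\Gamma(j,1)$ is far from Gaussian, need separate and careful treatment since $\psi'(j)$ is not small there), and verify that all these competing corrections of size $\frac{\log\log\alpha_n}{\log\alpha_n}$, $\frac{1}{\log\alpha_n}$, $\frac{1}{\sqrt{k_n}}$ are genuinely lower order than $\frac{(\log\log\alpha_n)^2}{\log\alpha_n}$ in the $\alpha=+\infty$ regime—that is, that $k_n\gg (\log\log\alpha_n)^{-4}(\log\alpha_n)^2$ is automatic, which follows from $k_n\to\infty$ faster than any needed rate once $\alpha_n\to\infty$ is unpacked.
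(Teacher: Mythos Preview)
Your proposal addresses only the case $\alpha=+\infty$, leaving the other two regimes of the theorem untouched; but more seriously, the sketch for $\alpha=+\infty$ rests on a direction error that undermines the approach. You write ``For $\alpha=+\infty$ we have $k_n/n\to\infty$,'' but the definition is $\alpha_n=n/k_n$, so $\alpha=+\infty$ means $k_n=o(n)$ --- in particular $k_n$ may stay bounded, even $k_n\equiv1$ (the single Ginibre matrix). There is then no Edgeworth expansion in $k_n$ for $\sum_{i=1}^{k_n}\log Y_{i,j}$: the ``correction of order $k_n^{-1/2}$'' you invoke need not be small, and your closing assertion that ``$k_n\to\infty$ faster than any needed rate once $\alpha_n\to\infty$ is unpacked'' is false in this regime.

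The paper circumvents this by a different linearization. Writing $j=n-m$ with $m$ small, it replaces $\log S_{j,r}$ by $(S_{j,r}-j)/j$ (the first Taylor term of $\log$ about~$1$) and uses the distributional identity $\sum_{r=1}^{k_n}S_{j,r}\stackrel{d}{=}\sum_{i=1}^{jk_n}\xi_i$ with $\xi_i$ i.i.d.\ $\mathrm{Exp}(1)$. Now $jk_n\sim nk_n\to\infty$ regardless of whether $k_n$ itself grows, so Petrov's precise moderate-deviation theorem applies to this sum (Lemma~\ref{tailforsumnew}). The residual $\sum_{r}\bigl(\log(S_{j,r}/j)-(S_{j,r}/j-1)\bigr)$ is handled separately by a second-moment bound and shown to be negligible (Lemma~\ref{333}). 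This is the missing idea: the CLT is applied to $jk_n$ exponentials, not to $k_n$ log-Gammas. Once that switch is made, the remainder of your outline --- Mills-ratio expansion, geometric-type summation over~$m$, the residual of size $(\log\log\alpha_n)^2/\log\alpha_n$, and extraction of the constant $1/(2e)$ via $\sup_x e^{-x-e^{-x}}=e^{-1}$ with crude tail bounds outside a window $|x|\lesssim\log\log\alpha_n$ --- does match the paper's argument.
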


This theorem provides an answer to the second question concerning the convergence rate of the distribution of \( X_n \) to its limiting distributions. We next present the rate at which \( \Phi_\alpha \) converges to the Gumbel distribution as \( \alpha \to +\infty \), and to the standard normal distribution as \( \alpha \to 0^+ \). These results confirm the phase transition behavior of \( \Phi_\alpha \), as anticipated earlier.

\begin{rmk}\label{transition}
    As the tuning parameter \( \alpha \) varies from \( 0 \) to \( +\infty \), the following asymptotic bounds hold:
    \[
    \sup_{x \in \mathbb{R}} \left| \Phi_\alpha(x) - \Phi(x) \right| = \frac{\sqrt{\alpha}}{\sqrt{2\pi}}(1 + o(1)), \quad \text{for } \alpha \ll 1,
    \]
    and
    \[
    \sup_{x \in \mathbb{R}} \left| \Phi_\alpha(x) - e^{-e^{-x}} \right| = \frac{(\log \log \alpha)^2}{2e \log \alpha}(1 + o(1)), \quad \text{for } \alpha \gg 1.
    \]
\end{rmk}

\begin{rmk}
    We observe three distinct asymptotic regimes for both the cases \( \alpha = 0 \) and \( \alpha \in (0, +\infty) \). In fact, these asymptotic behaviors can be unified into a single expression in each case.

    \begin{enumerate}
        \item For \( \alpha = 0 \), we have
        \begin{equation}\label{alphasum}
            \sup_{x \in \mathbb{R}} \left| \mathbb{P}(X_n \leq x) - \Phi(x) \right| = (1 + o(1)) \sup_{x \in \mathbb{R}} \phi(x) \left| \sqrt{\alpha_n} - \frac{x}{4n} \right|.
        \end{equation}
        The right-hand side of \eqref{alphasum} exhibits three different asymptotic behaviors depending on whether \( \beta = 0 \), \( \beta \in (0, +\infty) \), or \( \beta = +\infty \).

        \item For \( \alpha \in (0, +\infty) \), it holds that
        \begin{equation}\label{alphafinite}
            \begin{aligned}
            & \sup_{x \in \mathbb{R}} \left| \mathbb{P}(X_n \leq x) - \Phi_\alpha(x) \right| \\
            & = (1 + o(1)) \sup_{x \in \mathbb{R}} \Phi_\alpha(x) \left| \sum_{m=0}^{\infty} \frac{\phi(v_\alpha(m, x))}{\Phi(v_\alpha(m, x))} \left( n^{-1} q_1(m, x) + (\alpha_n - \alpha) q_2(m, x) \right) \right|.
            \end{aligned}
        \end{equation}
    \end{enumerate}
\end{rmk}

\begin{rmk}\label{upperbound}
    The supremum expressions appearing in Theorem~\ref{main} for \( \alpha \in (0, +\infty) \) do not admit a closed form. We therefore provide simple-though not sharp-upper bounds as follows:
    \[
    \sup_{x \in \mathbb{R}} \Phi_\alpha(x) \left| \sum_{m=0}^{\infty} \frac{q_1(m, x) \phi(v_\alpha(m, x))}{\Phi(v_\alpha(m, x))} \right| \leq \frac{4}{3} \left( \alpha + \sqrt{\alpha} + 1 \right),
    \]
    and
    \[
    \sup_{x \in \mathbb{R}} \Phi_\alpha(x) \left| \sum_{m=0}^{\infty} \frac{q_2(m, x) \phi(v_\alpha(m, x))}{\Phi(v_\alpha(m, x))} \right| \leq \frac{2}{e \ln 2} \left( c_1 + \frac{c_2(\alpha - 1)}{b} + \frac{1}{\alpha} \right) (1 + \sqrt{\alpha}).
    \]
\end{rmk}

The key estimates employed in the proof of Theorem \ref{main} can be readily adapted to establish the convergence rate in the $W_1$-Wasserstein distance.

\begin{rmk}\label{w1cor}
		Under the same framework as Theorem \ref{main}, 
	the $ W_1 $-Wasserstein distances  are as follows.	
	\item[(1).] Case $\alpha = 0:$
	
		$$  W_{1}\left(\mathcal{L}(X_n),\Phi\right)=[\sqrt{\alpha_n}(2\Phi(4n\sqrt{\alpha_n})-1)+\frac{1}{2n}\phi(4n\sqrt{\alpha_n})](1+o(1)).$$
	
	\item[(2).] Case $\alpha \in (0,\infty):$
	$$  W_{1}\left(\mathcal{L}(X_n),\Phi_{\alpha}\right)=\int_{-\infty}^{+\infty}\Phi_\alpha(x) | \sum_{m=0}^{\infty} \frac{\phi(v_\alpha(m, x))}{\Phi(v_\alpha(m, x))} \left( n^{-1} q_1(m, x) + (\alpha_n - \alpha) q_2(m, x) \right) |dx.$$
	\item[(3).] Case $\alpha =+\infty:$	
	$$ W_{1}\left(\mathcal{L}(X_n),\Lambda\right)=\frac{(\log \log \alpha_n)^{2}}{2\log \alpha_n}(1+o(1)).$$
\end{rmk}

\begin{rmk}
It is worth noting that when $k_n = 1,$ the product of complex Ginibre ensembles reduces to a single complex Ginibre ensemble. In this case, the Berry-Esseen bound and the $W_1$-Wasserstein distance for the rescaled spectral radius relative to the Gumbel distribution are of order $O\left( \frac{(\log \log n)^2}{\log n} \right).$  This result differs from the order $O\left( \frac{\log \log n}{\log n} \right)$ obtained in \cite{MaMeng2025}. As elucidated in \cite{HuMa25, MaTian}, this discrepancy is attributed to the use of different rescaling coefficients for the spectral radius.
\end{rmk}

Given the complexity of the proof of Theorem \ref{main}, we give an outline to improve its accessibility. 

{\bf Sketch of the proof}. 
We begin by following the approach of \cite{MaMeng2025} and partition the real line into three intervals using thresholds $\ell_{1,\alpha}(n)$ and $\ell_{2,\alpha}(n)$, where $\alpha \in [0, +\infty]$. The asymptotic behavior of $\mathbb{P}(X_n \le x)$ is analyzed in detail for $x$ in the central interval $[-\ell_{1,\alpha}(n), \ell_{2,\alpha}(n)],$ which yields the precise convergence rate. Meanwhile, the contributions from the two tail intervals are shown to be negligible. We now describe the derivation of a precise asymptotic expression for $\mathbb{P}(X_n \le x)$ when $x \in [-\ell_{1,\alpha}(n), \ell_{2,\alpha}(n)]$.

For simplicity, define for $0 \le m \le n-1$:
\begin{equation}\label{cnmx}
c_n(m, x) = \mathbb{P}\left( \log Y_{n-m} > k_n \psi(n) + \alpha_n^{-1/2}(a_n + b_n x) \right),
\end{equation}
so that
\[
\mathbb{P}(X_n \le x) = \prod_{m=0}^{n-1} \left(1 - c_n(m, x)\right).
\]
 The proof strategy is tailored to the value of \(\alpha\):
\begin{enumerate}

\item Case \(\alpha = +\infty\):  Here, it includes the possibility that \(k_n = O(1)\). We simplify the term \(\log Y_{n-m}\) in the expression \eqref{cnmx} by approximating it as a sum of i.i.d. exponential variables, \(\sum_{i=1}^{(n-m)k_n} \xi_i\) and then applying the central limit theorem (Lemmas \ref{tailforsumnew} and \ref{333}) to give a precise asymptotic for \(c_n(m, x)\) for \(0 \le m \le j_n - 1\). The tail sum \(\sum_{m=j_n}^{n-1} c_n(m, x)\) is negligible, allowing us to express the distribution function of $X_n$ as $$\mathbb{P}(X_n \le x) = \exp(-\sum_{m=0}^{j_n-1} c_n(m, x))(1+o(1)),$$ where the sum provides the leading-order behavior.

\item Case \(\alpha \in (0, +\infty)\): An Edgeworth expansion (Lemma \ref{m1}) yields the asymptotics of \(1 - c_n(m, x)\) for \(0 \le m \le j_n - 1\) within the range \(-\ell_{1,\alpha}(n) \le x \le \ell_{2,\alpha}(n)\). This leads to the key approximations in Lemma \ref{jn1}:
    \[\aligned
    \mathbb{P}(X_n \le x)& = \prod_{m=0}^{j_n-1}(1 - c_{n}(m, x))(1+o(1)); \\
    \Phi_{\alpha}(x) &= \prod_{m=0}^{j_n-1} \Phi(v_{\alpha}(m, x))(1+o(1)), \endaligned 
    \]
    from which the Berry-Esseen bound is derived.

\item Case \(\alpha = 0\): The asymptotic analysis shows that (Lemma \ref{88})\[\prod_{m=1}^{n-1}(1 - c_{n}(m, x))=1 + o(1),\] so the dominant contribution comes from the precise asymptotics of \(1 - c_{n}(0, x)\) given in Lemma \ref{ed2}. The Berry-Esseen bound follows.

\end{enumerate}

The paper is structured as follows. Section 2 contains preliminary lemmas, including upper bounds for \(c_n(m, x)\) and relevant Edgeworth expansions. The detailed analysis of \(c_n(m, x)\) and the proof of Theorem \ref{main} are divided into Sections 3, 4, and 5, which address the cases \(\alpha = +\infty\), \(\alpha \in (0, +\infty)\), and \(\alpha = 0\), respectively.

 We adopt standard asymptotic notation: \(t_n = O(z_n)\) if \(t_n/z_n\) tends to a non-zero constant, and \(t_n = o(z_n)\) if the ratio tends to zero. The notation \(f \lesssim g\) for \(f, g \geq 0\) means \(f \leq C g\) for an absolute constant \(C>0\). We also write \(t_n \ll z_n\) (or \(z_n \gg t_n\)) to denote \(t_n/z_n \to 0\).

\section{Premininaries}
We commence this section by presenting key lemmas. The initial one is pivotal, as it permits a reduction to a much simpler, decoupled framework. This phenomenon was initially observed by Kostlan \cite{Kostlan1992} in the context of complex Ginibre ensembles and has subsequently been generalized to a wide range of complex non-Hermitian random matrices.  

The details are as follows:

\begin{lem}\label{ctor}
	Let \(Z_1, \cdots, Z_n\) be the eigenvalues of \(\prod_{j=1}^{k_n} \boldsymbol{A}_j\), and let \(\{S_{j, \,r},\ 1\le j\le n,\ 1\le r\le k_n\}\) be independent random variables such that \(S_{j, \,r}\) has density function \(y^{j-1}e^{-y}/(j-1)!\) for $y>0.$ Defining \(Y_j = \prod_{r=1}^{k_n} S_{j, \,r}\) for \(1 \le j \le n\), we have
 \[
 \max_{1\le j\le n} \log|Z_j|^2 \stackrel{d}{=} \max_{1\le j\le n} \log Y_j.
 \]

\end{lem}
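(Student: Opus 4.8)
The plan is to reduce the spectral problem to an explicit radial point process via the joint eigenvalue density, and then apply a Kostlan-type decoupling. First I would recall from Akemann and Burda \cite{AkBur} that the eigenvalues $Z_1,\dots,Z_n$ of $\prod_{j=1}^{k_n}\boldsymbol{A}_j$ admit a rotationally invariant joint density of Vandermonde type,
\[
p(z_1,\dots,z_n)=\frac{1}{C_n}\prod_{1\le i<j\le n}|z_i-z_j|^2\prod_{j=1}^n w(|z_j|^2),\qquad z_j\in\mathbb{C},
\]
where $w=w_{k_n}$ is an explicit weight (a Meijer $G$-function) and $C_n$ a normalizing constant. Since only the moduli enter the statement, it suffices to describe the law of $(|Z_1|^2,\dots,|Z_n|^2)$, and in fact I would prove the stronger assertion that this collection equals $(Y_1,\dots,Y_n)$ in distribution as unordered sets.

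Next I would pass to polar coordinates $z_j=\sqrt{u_j}\,e^{i\theta_j}$ and integrate out the angles. Writing $\prod_{i<j}|z_i-z_j|^2=\bigl|\det(z_i^{\,l-1})_{1\le i,l\le n}\bigr|^2=\sum_{\sigma,\tau}\mathrm{sgn}(\sigma)\,\mathrm{sgn}(\tau)\prod_i z_i^{\sigma(i)-1}\bar z_i^{\tau(i)-1}$ and using $\int_0^{2\pi}e^{i(\sigma(i)-\tau(i))\theta}\,d\theta=2\pi\,\mathbf{1}_{\{\sigma(i)=\tau(i)\}}$, all cross terms vanish and the angular integral leaves a permanent, $(2\pi)^n\,\mathrm{per}\!\bigl(u_i^{\,l-1}\bigr)_{1\le i,l\le n}$. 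Combining with the Jacobian and the weight $w$, and normalizing each function $u\mapsto u^{\,l-1}w(u)$ to a probability density $p_l$ (legitimate since $w$ is, up to scale, the density of a product of finite-mean Gamma variables), the joint density of $(u_1,\dots,u_n)$ becomes $\tfrac{1}{n!}\,\mathrm{per}\!\bigl(p_l(u_i)\bigr)_{i,l}=\tfrac{1}{n!}\sum_{\sigma}\prod_i p_{\sigma(i)}(u_i)$. This is precisely the law of the unordered collection $\{R_1,\dots,R_n\}$ of \emph{independent} radii $R_l$ with density $p_l$, so that $\{|Z_j|^2\}_{1\le j\le n}\stackrel{d}{=}\{R_j\}_{1\le j\le n}$ as point sets on $[0,\infty)$.

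Finally I would identify $p_l$ with the law of $Y_l=\prod_{r=1}^{k_n}S_{l,r}$. Both are determined by their Mellin transforms: by independence $\mathbb{E}\,Y_l^{\,s-1}=\bigl(\Gamma(l+s-1)/\Gamma(l)\bigr)^{k_n}$, while from the Meijer-$G$ form of the weight in \cite{AkBur} one has $\int_0^\infty u^{s-1}w_{k_n}(u)\,du=c\,\Gamma(s)^{k_n}$, whence the Mellin transform of $p_l\propto u^{\,l-1}w_{k_n}(u)$ equals $\Gamma(s+l-1)^{k_n}/\Gamma(l)^{k_n}$ as well; hence $p_l$ is the density of $Y_l$. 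Combining with the previous step gives $\{|Z_j|^2\}_{1\le j\le n}\stackrel{d}{=}\{Y_j\}_{1\le j\le n}$; taking the maximum and then applying $\log$ yields $\max_{1\le j\le n}\log|Z_j|^2\stackrel{d}{=}\max_{1\le j\le n}\log Y_j$. The main obstacle is the second step — the precise form of Kostlan's decoupling, in particular verifying that the angular integration turns the squared Vandermonde into the permanent of the $n$ moduli with the shape exponents $l=1,\dots,n$ surviving intact — together with the Mellin/Meijer-$G$ matching in the last step; the remaining manipulations (Jacobians, normalizations) are routine.
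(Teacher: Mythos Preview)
Your proposal is correct and follows the standard Kostlan-type decoupling argument. Note, however, that the paper does not actually supply a proof of this lemma: it is stated as a known background result, with a reference to Kostlan \cite{Kostlan1992} for the original complex Ginibre case and a remark that the phenomenon ``has subsequently been generalized to a wide range of complex non-Hermitian random matrices'' (the product-of-Ginibre version appears, for instance, in \cite{JQ17}). So there is nothing to compare against beyond this citation.

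Your sketch is precisely the argument these references carry out: the Akemann--Burda joint density \cite{AkBur}, the angular integration turning $|\Delta(z)|^2$ into a permanent of powers of the moduli, and the Mellin identification of the radial weights $u^{l-1}w_{k_n}(u)$ with the law of $\prod_{r=1}^{k_n}S_{l,r}$. All three steps are handled correctly; the Mellin matching in particular is clean because the Meijer $G$-weight has Mellin transform $\Gamma(s)^{k_n}$, so $p_l$ has Mellin transform $\bigl(\Gamma(s+l-1)/\Gamma(l)\bigr)^{k_n}$, exactly that of $Y_l$. Your proof is thus more detailed than what the paper provides, and entirely in line with the literature it points to.
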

 We first recall our target
\[X_n = b_n^{-1} \left( \sqrt{\alpha_n}( \max_{1 \leq j \leq n} \log |Z_j|^2 - k_n \psi(n)) - a_n \right).\]
It follows from Lemma \ref{ctor} that
$$
	\aligned \mathbb{P}\left(X_n\leq x\right)&=\mathbb{P}\big(\max_{1 \leq j \leq n} \log Y_j\leq k_n\psi (n)+\frac{a_n+b_nx}{\sqrt{\alpha_n}}\big)
	\endaligned 
	$$
and then by the property of maximum and the independence of $(Y_j)_{1\le j\le n}$, we have in further that \begin{equation}\label{begin}
	\aligned
	\mathbb{P}\left(X_n\leq x\right)=
	\prod_{j=1}^{n}\mathbb{P}\big( \log Y_j\leq k_n\psi (n)+\frac{a_n+b_nx}{\sqrt{\alpha_n}}\big)=\prod_{m=0}^{n-1}(1- c_n(m, x)).
	\endaligned 
\end{equation}
Here, $c_n(m, x)$ is given in \eqref{cnmx}.
To get asymptotical expression for $\mathbb{P}(X_n\le x),$ as mentioned in the introduction, we will apply central limit theorem for i.i.d. case or use the Edgeworth expansion. Both of them requires the moments of $\log S_{j, 1}$ for $1\le j\le n.$   

We first provide  properties on digamma function (\cite{Abramowitz1968}), which is involved in the expectations related to $\{\log S_{j,\,1}\}_{j=1}^{n}$ and then state the corresponding expectations. These two lemmas will serve as a fundamental tool for our subsequent analysis. 

\begin{lem}\label{diagammapro}  Let $\psi(x) = \Gamma'(x)/\Gamma(x)$ be the digamma function, with $\Gamma$ being the Gamma function .  
	\begin{enumerate}
	
		\item[\textup{(a)}.]  For any $s \geq 1$,
		\[
		\psi(j + s) - \psi(j) \leq \frac{s}{j}.
		\]
		
		\item[\textup{(b)}.]  For sufficiently large $z$, the following asymptotics hold:
		\begin{align*}
			\psi(z) &= \log z - \frac{1}{2z} + O(z^{-2}), \quad\quad
			\psi'(z) = \frac{1}{z} + \frac{1}{2z^2} + O(z^{-3});\\
			\psi^{(2)}(z)  &= -\frac{1}{z^2} - \frac{1}{z^3} + O(z^{-4}),\quad\quad
			\psi^{(3)}(z)  = \frac{2}{z^3} + \frac{3}{z^4} +  O(z^{-5}).
		\end{align*}
	\end{enumerate}
\end{lem}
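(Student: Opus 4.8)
The plan is to dispatch the two parts by classical identities for the digamma function, both essentially elementary.

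For part (a), I would start from the Gauss--Weierstrass series $\psi(x) = -\gamma + \sum_{k=0}^{\infty}\bigl(\frac{1}{k+1} - \frac{1}{k+x}\bigr)$, valid for $x>0$, and subtract it at the two points $x=j+s$ and $x=j$, which gives the clean telescoping-ready identity
\[
\psi(j+s) - \psi(j) \;=\; \sum_{k=0}^{\infty}\Bigl(\frac{1}{k+j} - \frac{1}{k+j+s}\Bigr) \;=\; \sum_{k=0}^{\infty}\frac{s}{(k+j)(k+j+s)}.
\]
The one observation that does the work is that $s\ge 1$ forces $k+j+s \ge k+j+1$ for every $k\ge 0$, so each summand is at most $\frac{s}{(k+j)(k+j+1)} = s\bigl(\frac{1}{k+j}-\frac{1}{k+j+1}\bigr)$; summing the resulting telescoping series yields exactly $s/j$, proving (a). It is worth noting that the inequality is sharp (equality at $s=1$), so a cruder integral comparison for $\sum (k+j)^{-1}(k+j+s)^{-1}$ would lose too much — the telescoping step is the right tool.

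For part (b), I would invoke the classical Stirling-type expansion $\psi(z) \sim \log z - \frac{1}{2z} - \sum_{n\ge 1}\frac{B_{2n}}{2n\,z^{2n}} = \log z - \frac{1}{2z} - \frac{1}{12z^2} + \cdots$ as $z\to\infty$, with $B_{2n}$ the Bernoulli numbers (this is either quoted from \cite{Abramowitz1968} or obtained by differentiating Stirling's expansion of $\log\Gamma$); truncating after the $z^{-2}$ term gives the stated formula for $\psi(z)$. For the derivatives I would use $\psi^{(m)}(z) = (-1)^{m+1}m!\sum_{k\ge 0}(z+k)^{-m-1}$ for $m\ge 1$ and apply Euler--Maclaurin summation to each series (equivalently, differentiate the asymptotic expansion of $\psi$ term by term, which is legitimate here), producing $\psi'(z) = z^{-1} + \tfrac12 z^{-2} + \tfrac16 z^{-3} + O(z^{-5})$, $\psi^{(2)}(z) = -z^{-2} - z^{-3} - \tfrac12 z^{-4} + O(z^{-5})$, and $\psi^{(3)}(z) = 2z^{-3} + 3z^{-4} + O(z^{-5})$; discarding the higher-order terms gives exactly the asserted expansions.

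There is no substantial obstacle: the whole statement is a packaging of standard facts. The only points needing a little care are (i) verifying in part (a) that the termwise bound and telescoping are valid for every real $s\ge 1$, not just integer $s$ (they are, since nothing in the argument used integrality), and (ii) getting the numerical coefficients right in the differentiated expansions of part (b) and confirming the claimed orders of the error terms — all of which are tabulated in \cite{Abramowitz1968} if one prefers to cite rather than rederive them.
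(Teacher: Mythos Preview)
Your proposal is correct. The paper does not actually supply a proof of this lemma; it simply states the result and cites \cite{Abramowitz1968} as the source. Your argument for part (a) via the series representation and telescoping is clean and sharp (with equality at $s=1$, as you note), and your treatment of part (b) --- quoting the Stirling-type expansion of $\psi$ and differentiating termwise or using $\psi^{(m)}(z)=(-1)^{m+1}m!\sum_{k\ge 0}(z+k)^{-m-1}$ with Euler--Maclaurin --- is exactly the standard route and matches what one finds in \cite{Abramowitz1968}. So rather than taking a different route from the paper, you have simply supplied the details the paper omits.
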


Now, we directly state the expectations related to $\{\log S_{i,1}\}_{i=1}^{n}$ without proof. 
\begin{lem}\label{le}
	Let $(S_{j, 1})_{1\le j\le n}$ be defined as above. The following assertions hold:
	
	\item[(1).]  For the moments and moment generating function (MGF), we have:
	\begin{align*}
		\mu:=\mathbb{E}[\log S_{j, 1}] = \psi(j)&, \quad\quad
		\sigma^{2}:={\rm Var}[\log S_{j,1}] = \psi'(j), \\
		\mathbb{E}[S_{j, 1} \log S_{j, 1}] = j \psi(j+1)&, \quad\quad
		\mathbb{E}[e^{\lambda \log S_{j,1}}] = \frac{\Gamma(j+\lambda)}{\Gamma(j)}.
	\end{align*}
	
\item[(2).] For sufficiently large $j$, the skewness and kurtosis correction terms for $\log S_{j,1}$ are given by:
$$\gamma_{1}:=\frac{\mathbb{E}(\log S_{j,1}-\mu)^{3}}{6\sigma^{3}}=-\frac{1}{6\sqrt{j}}(1+O(j^{-1}))\quad \textup{(skewness correction)};$$
$$\gamma_{2}:=\frac{\mathbb{E}[(\log S_{j,1}-\mu)^4] - 3\sigma^4}{24\sigma^4} =\frac{1}{12j}(1+O(j^{-1}))\quad \textup{(kurtosis correction)}.$$
\end{lem}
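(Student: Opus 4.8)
The plan is to derive every assertion from a single master identity. Since $S_{j,1}$ has the Gamma$(j,1)$ density $y^{j-1}e^{-y}/(j-1)!$, for any $\lambda>-j$ one computes directly
\[
\mathbb{E}\bigl[e^{\lambda \log S_{j,1}}\bigr]=\mathbb{E}\bigl[S_{j,1}^{\lambda}\bigr]=\frac{1}{(j-1)!}\int_0^\infty y^{j+\lambda-1}e^{-y}\,dy=\frac{\Gamma(j+\lambda)}{\Gamma(j)},
\]
which is the stated MGF and also settles the MGF formula at $\lambda=1$. The integral converges for all $\lambda$ in a neighborhood of the origin, so $\log S_{j,1}$ has an analytic moment generating function there and all moments and cumulants may be read off by differentiation; interchanging $d/d\lambda$ with the integral is justified by dominated convergence, the dominating function being $y^{j+\lambda_0-1}(1+|\log y|)^{k}e^{-y}$ for $\lambda$ in a fixed compact neighborhood of $0$ and each fixed $k$.

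Next I would pass to the cumulant generating function $K(\lambda):=\log\mathbb{E}[e^{\lambda\log S_{j,1}}]=\log\Gamma(j+\lambda)-\log\Gamma(j)$. Differentiating gives $K'(\lambda)=\psi(j+\lambda)$, $K''(\lambda)=\psi'(j+\lambda)$, $K'''(\lambda)=\psi''(j+\lambda)$ and $K^{(4)}(\lambda)=\psi'''(j+\lambda)$, so evaluating at $\lambda=0$ yields $\mu=\psi(j)$, $\sigma^{2}=\psi'(j)$, and cumulants $\kappa_{3}=\psi''(j)$, $\kappa_{4}=\psi'''(j)$. The identity $\mathbb{E}[S_{j,1}\log S_{j,1}]=j\psi(j+1)$ then follows in one line, either by differentiating $\lambda\mapsto\Gamma(j+\lambda)/\Gamma(j)$ at $\lambda=1$, which gives $\Gamma'(j+1)/\Gamma(j)=j\psi(j+1)$, or more elementarily from the density-shift relation $y\cdot y^{j-1}e^{-y}/(j-1)!=j\cdot y^{j}e^{-y}/j!$, identifying $\mathbb{E}[S_{j,1}\log S_{j,1}]$ with $j\,\mathbb{E}[\log S_{j+1,1}]=j\psi(j+1)$.

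For the skewness and kurtosis corrections I would convert cumulants to central moments via the standard relations: the third central moment equals $\kappa_{3}$, hence $\gamma_{1}=\kappa_{3}/(6\sigma^{3})=\psi''(j)/\bigl(6(\psi'(j))^{3/2}\bigr)$; the fourth central moment equals $\kappa_{4}+3\kappa_{2}^{2}$, hence $\mathbb{E}[(\log S_{j,1}-\mu)^{4}]-3\sigma^{4}=\kappa_{4}=\psi'''(j)$ and therefore $\gamma_{2}=\kappa_{4}/(24\sigma^{4})=\psi'''(j)/\bigl(24(\psi'(j))^{2}\bigr)$. Inserting the expansions of Lemma~\ref{diagammapro}(b), namely $\psi'(j)=j^{-1}+\tfrac12 j^{-2}+O(j^{-3})$, $\psi''(j)=-j^{-2}-j^{-3}+O(j^{-4})$, and $\psi'''(j)=2j^{-3}+3j^{-4}+O(j^{-5})$, together with the elementary Taylor estimates $(\psi'(j))^{3/2}=j^{-3/2}(1+O(j^{-1}))$ and $(\psi'(j))^{2}=j^{-2}(1+O(j^{-1}))$, gives $\gamma_{1}=-\tfrac{1}{6\sqrt{j}}(1+O(j^{-1}))$ and $\gamma_{2}=\tfrac{1}{12 j}(1+O(j^{-1}))$, as claimed.

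The argument is essentially mechanical: the only points requiring any care are the justification of differentiating under the integral sign (a routine dominated-convergence argument) and retaining the correct first-order error terms when expanding the fractional and integer powers of $\psi'(j)$ — which is precisely why Lemma~\ref{diagammapro}(b) is stated to the order needed. I do not anticipate any substantive obstacle.
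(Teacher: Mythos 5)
Your proposal is correct and is the standard proof of the lemma; the paper itself states the lemma without proof, so there is nothing to compare to, but the argument you give is exactly what is intended. One small piece of completeness: for part~(2) you should make explicit that the precise constants in $\psi'(j)$, $\psi''(j)$, $\psi'''(j)$ from Lemma~\ref{diagammapro}(b) are what guarantee the $(1+O(j^{-1}))$ relative error — for instance $(\psi'(j))^{3/2}=j^{-3/2}\bigl(1+\tfrac{3}{4j}+O(j^{-2})\bigr)$ and $\psi''(j)=-j^{-2}\bigl(1+j^{-1}+O(j^{-2})\bigr)$, so the ratio is $-\tfrac{1}{6}j^{-1/2}\bigl(1+\tfrac14 j^{-1}+O(j^{-2})\bigr)$, and similarly for $\gamma_2$ — but this is exactly the "mechanical" step you flag, and it goes through.
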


Using the expectation of 
$ \log Y_j $ from Lemma \ref{le} and the Markov inequality, we derive both upper and lower tail probabilities for $ \log Y_j $.

\begin{lem}\label{H}
		Let  $Y_j$ be defined as above.  The following probability bounds hold:
	\item[(1).] For all $0<m\ll n$, $$	c_n(m,x)
	\leq\exp\left\{-\frac{m^2}{16\alpha_n}-\frac{m(a_n+b_n x)}{4\sqrt{\alpha_n}} \right\}.
	$$
	\item[(2).] When $m=0$ and any $x>0$, 
	$$c_n(0,x)\leq\exp\{-\frac{1}{4}(a_n+b_nx)^{2}\}.$$
	\item[(3).]For $m=0$ and any $ x $ satisfying $ a_n+b_nx<0,$
	$$	1-c_n(0,x)
	\leq\exp\left\{  - \frac{1}{3}(a_n+b_nx)^{2}\right\}.
	$$
\end{lem}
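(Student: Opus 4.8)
The statement to prove is Lemma~\ref{H}, giving the three tail bounds for $c_n(m,x)$ via Markov's inequality and the MGF computed in Lemma~\ref{le}.

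\medskip

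\textbf{Proof proposal for Lemma~\ref{H}.}

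The plan is to apply the exponential Markov (Chernoff) inequality to each of $\log Y_{n-m}$, using the product structure $Y_{n-m}=\prod_{r=1}^{k_n}S_{n-m,\,r}$ and the MGF formula $\mathbb{E}[e^{\lambda\log S_{j,1}}]=\Gamma(j+\lambda)/\Gamma(j)$ from Lemma~\ref{le}. Since the factors $S_{n-m,r}$, $1\le r\le k_n$, are i.i.d., we have $\mathbb{E}[e^{\lambda\log Y_{n-m}}]=(\Gamma(n-m+\lambda)/\Gamma(n-m))^{k_n}$, and hence for any $\lambda>0$,
\[
c_n(m,x)=\mathbb{P}\!\left(\log Y_{n-m}>t_n\right)\le e^{-\lambda t_n}\left(\frac{\Gamma(n-m+\lambda)}{\Gamma(n-m)}\right)^{k_n},\qquad t_n:=k_n\psi(n)+\frac{a_n+b_nx}{\sqrt{\alpha_n}}.
\]
Taking logarithms, the exponent is $-\lambda t_n+k_n\big(\log\Gamma(n-m+\lambda)-\log\Gamma(n-m)\big)$, and by convexity of $\log\Gamma$ (or by integrating $\psi$), $\log\Gamma(n-m+\lambda)-\log\Gamma(n-m)=\int_0^\lambda\psi(n-m+s)\,ds\le\lambda\,\psi(n-m+\lambda)$ when $\lambda\ge 0$. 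The key monotonicity input is that $\psi$ is increasing, together with part (a) of Lemma~\ref{diagammapro} to compare $\psi(n-m+\lambda)$ or $\psi(n-m)$ against $\psi(n)$: writing $n=(n-m)+m$ and using $\psi(n)-\psi(n-m)\le m/(n-m)$, which for $m\ll n$ is $\le 2m/n=2m\alpha_n^{-1}/k_n$ up to constants.

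\medskip

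For part (1), I would choose the explicit value $\lambda=\tfrac12$. Then the exponent becomes
\[
-\tfrac12\Big(k_n\psi(n)+\tfrac{a_n+b_nx}{\sqrt{\alpha_n}}\Big)+k_n\int_0^{1/2}\psi(n-m+s)\,ds
= -\tfrac{a_n+b_nx}{2\sqrt{\alpha_n}}+k_n\!\int_0^{1/2}\!\big(\psi(n-m+s)-\psi(n)\big)\,ds.
\]
Because $\psi$ is increasing, $\psi(n-m+s)-\psi(n)\le\psi(n-m+\tfrac12)-\psi(n)$; to get a genuinely negative contribution of size $-m^2/\alpha_n$ one needs a lower-order refinement — roughly, $\psi(n-m+s)-\psi(n)\approx\log\frac{n-m}{n}\le-\frac{m}{n}-\frac{m^2}{2n^2}$ from the expansion $\psi(z)=\log z-\frac{1}{2z}+O(z^{-2})$ in Lemma~\ref{diagammapro}(b), so that $k_n$ times this is of order $-\frac{mk_n}{n}-\frac{m^2 k_n}{n^2}=-\frac{m}{\alpha_n}\cdot(\text{const})-\frac{m^2}{\alpha_n\cdot\alpha_n k_n}$; the stated bound $-\frac{m^2}{16\alpha_n}-\frac{m(a_n+b_nx)}{4\sqrt{\alpha_n}}$ then follows after absorbing constants and using $m\ll n$ so that $n-m\asymp n$. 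I expect this constant-tracking (verifying the $16$ and the $4$) to be the fiddly part, but it is purely a matter of bounding $\int_0^{1/2}(\psi(n-m+s)-\psi(n))\,ds$ from above by $-\frac{m}{4n}-\frac{m^2}{8n^2}$ or similar for large $n$.

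\medskip

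Parts (2) and (3) are the $m=0$ case, where $\log Y_n=\sum_{r=1}^{k_n}\log S_{n,r}$ is a sum of $k_n$ i.i.d. terms each with mean $\psi(n)$ and variance $\psi'(n)\approx 1/n$, so the sum has mean $k_n\psi(n)$ and variance $\approx k_n/n=1/\alpha_n$; thus $\sqrt{\alpha_n}(\log Y_n-k_n\psi(n))$ is approximately standard normal, and $c_n(0,x)=\mathbb{P}(\sqrt{\alpha_n}(\log Y_n-k_n\psi(n))>a_n+b_nx)$ should decay like $e^{-(a_n+b_nx)^2/2}$; the constants $\tfrac14$ and $\tfrac13$ in the statement are deliberately loose to make a clean sub-Gaussian-type Chernoff bound valid for \emph{all} $n$ (not just asymptotically). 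For part (2), optimize the Chernoff bound $c_n(0,x)\le\exp\{-\lambda(a_n+b_nx)/\sqrt{\alpha_n}+k_n\int_0^\lambda(\psi(n+s)-\psi(n))\,ds\}$ over $\lambda>0$: since $\psi(n+s)-\psi(n)\le\psi'(n+s)\cdot s$ is $\lesssim s/n$, the integral is $\lesssim\lambda^2/(2n)$, giving exponent $\le-\lambda u+C\lambda^2/\alpha_n$ with $u=a_n+b_nx>0$; choosing $\lambda$ proportional to $u\alpha_n$ and controlling the constant so the net bound is $-\tfrac14 u^2$ requires only that $\lambda$ stay in the regime where the quadratic approximation of $\log\Gamma$ is valid, i.e. $\lambda=O(n)$, which holds since $u$ is bounded on the relevant range. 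For part (3), one instead bounds the \emph{lower} tail: $1-c_n(0,x)=\mathbb{P}(\log Y_n\le k_n\psi(n)+u/\sqrt{\alpha_n})$ with $u<0$, so apply Markov to $e^{-\lambda\log Y_n}$ with $\lambda>0$, using $\mathbb{E}[e^{-\lambda\log S_{n,1}}]=\Gamma(n-\lambda)/\Gamma(n)$ (valid for $0<\lambda<n$) and $\log\Gamma(n-\lambda)-\log\Gamma(n)=-\int_0^\lambda\psi(n-s)\,ds\le-\lambda\psi(n-\lambda)+\lambda\psi(n)\cdot 0$... more carefully $\le-\lambda\psi(n)+\lambda^2\sup\psi'\lesssim-\lambda\psi(n)+C\lambda^2/n$, yielding exponent $\le\lambda u/\sqrt{\alpha_n}\cdot(-1)+\ldots$; optimizing gives $-\tfrac13 u^2$, again with the constant chosen loosely enough to be uniform in $n$.

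\medskip

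\textbf{Main obstacle.} The conceptual content is just Chernoff plus the $\log\Gamma$/digamma expansions already recorded in Lemmas~\ref{diagammapro} and~\ref{le}; the only real work is (i) controlling the remainder in $\log\Gamma(n-m+\lambda)-\log\Gamma(n-m)-\lambda\psi(n)$ uniformly in $n$ and in the allowed range of $m$ and $x$, and (ii) in part~(1), extracting the genuinely second-order term $-m^2/(16\alpha_n)$, which does not come from the crude bound $\psi(n-m+s)\le\psi(n)$ but requires the sharper estimate $\psi(n-m+s)\le\psi(n)-\frac{m}{2n}$ (say) valid for $m\ll n$ — i.e. one must keep the $\log\frac{n-m}{n}$ term rather than discarding it. Once those estimates are in place, the three bounds follow by substituting the indicated values of $\lambda$ (namely $\lambda=\tfrac12$ for (1), and near-optimal $\lambda\asymp|u|\alpha_n$ for (2) and (3)) and checking that the loose constants $16,4,\tfrac14,\tfrac13$ accommodate all lower-order error terms for $n$ large.
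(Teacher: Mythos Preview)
Your treatment of parts (2) and (3) is essentially the paper's: Chernoff with the MGF $\Gamma(n+\lambda)/\Gamma(n)$, the bound $\int_0^\lambda(\psi(n+s)-\psi(n))\,ds\lesssim \lambda^2/n$, and $\lambda$ of order $(a_n+b_nx)\sqrt{\alpha_n}$ (not $\alpha_n$ --- a minor slip in your write-up). The paper takes $t=\tfrac12(a_n+b_nx)\sqrt{\alpha_n}$ for (2) and $t=(a_n+b_nx)\sqrt{\alpha_n}$ for (3), which matches your plan.

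Part (1), however, has a real gap: the choice $\lambda=\tfrac12$ cannot produce the stated bound. With $\lambda=\tfrac12$ the Chernoff exponent is
\[
-\frac{a_n+b_nx}{2\sqrt{\alpha_n}}+k_n\!\int_0^{1/2}\!\big(\psi(n-m+s)-\psi(n)\big)\,ds,
\]
and the integral is $\approx -\tfrac12 k_n\cdot\tfrac{m}{n}=-\tfrac{m}{2\alpha_n}$, linear in $m$. Your attempt to squeeze out the quadratic term from the second-order Taylor expansion of $\psi$ gives only $-m^2k_n/n^2=-m^2/(n\alpha_n)$, which is smaller than the required $-m^2/(16\alpha_n)$ by a full factor of $n$. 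Moreover the $x$-dependent piece you get, $-(a_n+b_nx)/(2\sqrt{\alpha_n})$, carries no factor of $m$ at all, so it cannot match $-m(a_n+b_nx)/(4\sqrt{\alpha_n})$ either.

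The fix is to let $\lambda$ scale with $m$: the paper takes $t=m/4$. Writing $j=n-m$, the exponent is bounded by
\[
\frac{k_n}{j}\,t\big(t-j(\psi(n)-\psi(j))\big)-\frac{t(a_n+b_nx)}{\sqrt{\alpha_n}},
\]
and since $j(\psi(n)-\psi(j))\ge m/2$ for $m\ll n$ (from $\psi(z)=\log z-\tfrac{1}{2z}+O(z^{-2})$), the bracket at $t=m/4$ is $(m/4)(-m/4)=-m^2/16$; together with $k_n/j\ge k_n/n=1/\alpha_n$ this yields $-m^2/(16\alpha_n)$, and the last term becomes $-m(a_n+b_nx)/(4\sqrt{\alpha_n})$. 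So the $m^2$ does \emph{not} come from a finer expansion of $\psi$ but from the optimization in $t$: both the quadratic cost $k_nt^2/j$ and the linear gain $-tk_n(\psi(n)-\psi(j))\approx -tm/\alpha_n$ become $O(m^2/\alpha_n)$ once $t\propto m$.
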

\begin{proof}
	 Set $j=n-m.$ Given any $t>0,$  it follows from Lemma \ref{le} and the Markov inequality that for each $0<m\ll n$ and any \( x > 0 \),
	$$\aligned 
	c_n(m,x)\leq&\mathbb{E}(e^{t \log Y_j})\exp \left\{ -t\left( k_n\psi (n)+\alpha_n^{-1/2}(a_n+b_nx) \right) \right\}\\
	=& \exp \{ k_n (\log\Gamma(j + t) - \log \Gamma(j)) - t \left(  k_n\psi (n)+\alpha_n^{-1/2}(a_n+b_nx)\right) \}\\
	=& \exp\{ k_n \int_0^t \psi(j + s) ds - t \left(  k_n\psi (n)+\alpha_n^{-1/2}(a_n+b_nx) \right)\}\\
	=&\exp\{ k_n \int_0^t (\psi(j + s) - \psi(j)) ds - tk_n (\psi (n) - \psi(j)) -t \alpha_n^{-1/2} (a_n+b_nx) \}.
	\endaligned $$
	Due to Lemma \ref{diagammapro}, it holds that
	$$\aligned \int_0^t \psi(j + s) - \psi(j) ds&=\int_0^1 \psi(j + s) - \psi(j) ds+\int_1^t \psi(j + s) - \psi(j) ds\\
	&\leq \psi(j + 1) - \psi(j)+\int_1^t \frac{s}{j} ds\\
	&\leq\frac{t^{2}+1}{2j}\leq\frac{t^2}{j} \endaligned $$ 
	and $$\psi (n) - \psi(j)=\log n-\log j-\frac{1}{2n}+\frac{1}{2j}+o(n^{-2})\ge \frac{m}{n}.$$
	Therefore, we have 
	\begin{equation}\label{k12}
		\aligned 
		c_{n}(m, x)	\leq\exp\left\{ k_nj^{-1}t^2-t k_n(\psi (n) - \psi(j))) -t\alpha_n^{-1/2}(a_n+b_nx) \right\}
		\endaligned
	\end{equation}	
	for all $ t > 1 $ and sufficiently large $ n. $ 
	Applying Lemma \ref{diagammapro} again, we know 
	 and then 
	$$t (t-j(\psi (n) - \psi(j)))=t(t-j(\log n-\log j-\frac{1}{2n}+\frac{1}{2j}+o(n^{-2})))\leq t(t-\frac{m}{2}).$$
	By selecting $ t=\frac{m}{4}$, we get
	$$	c_n(m,x)
	\leq\exp\left\{-\frac{m^2}{16\alpha_n}-\frac{m(a_n+b_n x)}{4\sqrt{\alpha_n}} \right\}.
	$$
	When $m=0$ and $x>0$,	it follows from the preceding proof that
	$$c_n(0,x)\leq \exp\{\frac{t^{2}}{\alpha_n}-\frac{t(a_n+b_n x)}{\sqrt{\alpha_n}}\}.$$
	Since $x>0,$ we take $t=(\frac{a_n+b_nx}{2})\sqrt{\alpha_n}$ and obtain
	$$c_n(0,x)\leq\exp\{-\frac{1}{4}(a_n+b_nx)^{2}\}.$$
	For any $x$ satisfying	$a_n+b_nx<0,$ the lower tail probability admits the same exponential bound, which implies, for any $ t<0,$ we have
	$$ \aligned
	1-c_n(0,x)&\leq \exp\left\{ k_n \int_0^t \psi(n + s) -\psi (n)ds - \frac{t(a_n+b_nx)}{\sqrt{\alpha_n}}\right\}\\
	&\leq\exp\left\{ \frac{2t^2}{3\alpha_n} - \frac{t(a_n+b_n x)}{\sqrt{\alpha_n}}\right\}.
	\endaligned
	$$
	Now $a_n+b_n x<0$, we set
	$ t=\sqrt{\alpha_n}(a_n+b_nx),$ which leads to
	$$ \aligned
	1-c_n(0,x)
	\leq\exp\left\{  - \frac{1}{3}(a_n+b_nx)^{2}\right\}.
	\endaligned
	$$ The proof is then completed. 
\end{proof}
By virtue of the properties of 
$\log S_{j, \,r}$
in Lemma \ref{le}, the Edgeworth expansion of 
$\log S_{j, \,r}$
can be derived (see \cite{DasGupta2008}; for the proof, see \cite{Esseen1945}). This expansion provides a more precise estimation compared to the central limit theorem.
\begin{lem}\label{ed}
	Given $0\leq m\ll n$ and $n\lesssim k_n.$ For any $|x_n|\lesssim n^{1/6},$
$$\mathbb{P}\left( \frac{\log Y_{n-m}-k_n\psi(n-m)}{\sqrt{k_n}\psi'(n-m)}\leq x_n\right)=\Phi(x_n)-\frac{(1-x_n^{2})}{6\sqrt{k_nn}}\phi(x_n)+O(k_n^{-\frac{3}{2}}+k_n^{-\frac{1}{2}}n^{-\frac{3}{2}}).$$
\end{lem}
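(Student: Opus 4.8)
## Proof proposal for Lemma \ref{ed}

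The plan is to derive the Edgeworth expansion for the normalized sum $\log Y_{n-m} = \sum_{r=1}^{k_n}\log S_{n-m,\,r}$, which is a sum of $k_n$ i.i.d. copies of $\log S_{n-m,1}$. First I would record, from Lemma \ref{le}, that each summand $\log S_{j,1}$ (with $j=n-m$) has mean $\mu=\psi(j)$, variance $\sigma^2=\psi'(j)$, normalized third cumulant $\lambda_3:=\mathbb{E}(\log S_{j,1}-\mu)^3/\sigma^3$, and that the characteristic function of $\log S_{j,1}$ is smooth with all cumulants controlled via the derivatives of $\log\Gamma$; since $j=n-m$ and $0\le m\ll n$, Lemma \ref{diagammapro} gives $\sigma^2=\psi'(j)=j^{-1}(1+O(j^{-1}))=n^{-1}(1+o(1))$, and $\lambda_3=-j^{-1/2}(1+O(j^{-1}))$, so that $6\gamma_1=\lambda_3$ in the notation of Lemma \ref{le}(2). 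The classical one-term Edgeworth expansion for i.i.d.\ sums (as in \cite{DasGupta2008,Esseen1945}) then states that, for a random variable whose distribution is non-lattice and satisfies Cramér's condition (which $\log S_{j,1}$ does, being absolutely continuous),
\[
\mathbb{P}\!\left(\frac{\log Y_{n-m}-k_n\mu}{\sqrt{k_n}\,\sigma}\le x_n\right)=\Phi(x_n)-\frac{\lambda_3}{6\sqrt{k_n}}\,(x_n^2-1)\phi(x_n)+\text{(remainder)}.
\]
Substituting $\mu=\psi(n-m)$, $\sigma^2=\psi'(n-m)$, and $\lambda_3/(6\sqrt{k_n})=-1/(6\sqrt{k_n}\,\sqrt{j}\,)(1+O(j^{-1}))=-1/(6\sqrt{k_n n})(1+o(1))$ and noting $(x_n^2-1)\phi(x_n)=-(1-x_n^2)\phi(x_n)$ yields the stated main term $-(1-x_n^2)\phi(x_n)/(6\sqrt{k_n n})$.

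The substantive work is controlling the remainder uniformly over $|x_n|\lesssim n^{1/6}$ and showing it is $O(k_n^{-3/2}+k_n^{-1/2}n^{-3/2})$. I would proceed via the characteristic-function/smoothing approach: write $f_j(t)=\mathbb{E}[e^{it(\log S_{j,1}-\mu)/\sigma}]$, so the characteristic function of the normalized sum is $f_j(t/\sqrt{k_n})^{k_n}$. On the ``small $t$'' range $|t|\le \delta\sqrt{k_n}$ one expands $\log f_j(t/\sqrt{k_n})=-t^2/2 + (i\lambda_3/6)t^3 k_n^{-1/2} + \lambda_4 t^4/(24 k_n)+\cdots$, where the fourth cumulant is $\lambda_4=O(j^{-1})=O(n^{-1})$ by Lemma \ref{le}(2); exponentiating and comparing with the Fourier transform of the Edgeworth correction gives an error whose integral contributes the two terms: the $k_n^{-3/2}$ comes from the $\lambda_3^2$-type second-order term $(\lambda_3^2/72)t^6 k_n^{-1}$ together with the $x_n$-range (since $|x_n|\lesssim n^{1/6}$ and $\lambda_3^2/k_n = O(1/(jk_n))$ one must check $x_n^3\lambda_3^2/k_n = O(n^{1/2}\cdot n^{-1}k_n^{-1})=O(n^{-1/2}k_n^{-1})$ is absorbed — actually the dominant genuinely-unavoidable piece is the $\lambda_4/k_n$ fourth-cumulant term of size $O(k_n^{-1}n^{-1})$ and the next Edgeworth term of size $O(k_n^{-1})$, so the bookkeeping needs care), while the $k_n^{-1/2}n^{-3/2}$ term tracks the interaction of the $\lambda_3$ correction with the $x_n$-dependence. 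On the ``large $t$'' range $\delta\sqrt{k_n}\le|t|\le T$ one uses $|f_j(t/\sqrt{k_n})|\le e^{-ct^2/k_n}$-type bounds for $|t|$ not too large and the Cramér condition $\sup_{|u|\ge\delta}|f_j(u)|\le 1-\kappa<1$ for $|t|$ of order $\sqrt{k_n}$ or larger, so that $|f_j(t/\sqrt{k_n})^{k_n}|\le (1-\kappa)^{k_n/2}$ decays super-polynomially; the Esseen smoothing inequality with bandwidth $T$ of polynomial order then makes this tail negligible.

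The main obstacle — and the step demanding the most care — is that the summand distribution \emph{depends on $n$} through $j=n-m$: the third cumulant $\lambda_3\asymp n^{-1/2}$ is itself vanishing, so the naive Edgeworth error $O(k_n^{-3/2})$ from a fixed distribution is not the whole story, and one must track how every cumulant's $n$-decay combines with powers of $k_n$ and with the allowed growth $|x_n|\lesssim n^{1/6}$. In particular the Cramér-type condition must be verified \emph{uniformly in $j$} in the relevant range — i.e.\ $\sup_{j\le n}\sup_{|u|\ge\delta}|\mathbb{E}e^{iu\log S_{j,1}/\sigma_j}|<1$ — which follows because $\log S_{j,1}$ has a bounded-variation density (indeed a smooth density on $(-\infty,\infty)$ after the log-change of variables) but requires an explicit estimate rather than a soft appeal. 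Once the uniform-in-$j$ smoothness bounds are in hand, the remaining estimates are the standard Fourier-analytic bookkeeping, and collecting the orders of the neglected terms produces exactly $O(k_n^{-3/2}+k_n^{-1/2}n^{-3/2})$ under the hypothesis $n\lesssim k_n$.
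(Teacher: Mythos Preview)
Your approach is correct and rests on the same idea as the paper---the Edgeworth expansion for the i.i.d.\ sum $\log Y_{n-m}=\sum_{r=1}^{k_n}\log S_{n-m,r}$---but you are working much harder than necessary. The paper does not rederive the expansion via the characteristic-function smoothing argument; it simply \emph{quotes} the two-term Edgeworth expansion (citing \cite{DasGupta2008,Esseen1945}) with uniform remainder $O(k_n^{-3/2})$, namely
\[
\Phi(x_n)+\frac{\gamma_1(1-x_n^2)\phi(x_n)}{\sqrt{k_n}}-\frac{\phi(x_n)}{k_n}\Bigl(\gamma_2(x_n^3-3x_n)+\tfrac{\gamma_1^2}{72}(x_n^5-10x_n^3+15x_n)\Bigr)+O(k_n^{-3/2}),
\]
and then substitutes the cumulant asymptotics $\gamma_1=-\tfrac{1}{6\sqrt{n}}(1+O(n^{-1}))$, $\gamma_2=\tfrac{1}{12n}(1+O(n^{-1}))$ from Lemma~\ref{le}. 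The $O(n^{-1})$ correction in $\gamma_1$ produces the $O(k_n^{-1/2}n^{-3/2})$ error; the entire second-order term is $O(k_n^{-1}n^{-1})$ because $x^k\phi(x)$ is bounded for $0\le k\le 5$, and under $n\lesssim k_n$ one has $k_n^{-1}n^{-1}\lesssim k_n^{-1/2}n^{-3/2}$. That is the whole proof.

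Your concern about the summand law depending on $n$ through $j=n-m$ is overstated: the constant in the $O(k_n^{-3/2})$ remainder of the classical expansion depends only on a bound for the (normalized) low-order absolute moments and on the Cram\'er bound, both of which are uniform in $j$ once $j\to\infty$ (and here $j\sim n$). So no uniform-in-$j$ Fourier analysis needs to be redone; citing the standard theorem suffices. Your mid-sentence hesitation about which term dominates the remainder (the $\lambda_3^2$ piece versus the $\lambda_4$ piece versus a bare $O(k_n^{-1})$) is exactly what the paper's route avoids: by keeping the full second-order term explicit and bounding it afterwards, the bookkeeping becomes trivial.
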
 
 \begin{proof}
 	Since \((\log S_{n-m, \,r})_{1\le r\le k_n}\) are i.i.d. random sequence and satisfy the Cram\'er condition and possesses a finite fourth moment, it fulfills the requirements for applying the Edgeworth expansion.

Hence,  
 	\begin{align*}&\mathbb{P}\left( \frac{\log Y_{n-m}-k_n\psi(n-m)}{\sqrt{k_n}\psi'(n-m)}\leq x_n\right) 
 	\\=&\mathbb{P}\left( \frac{\sum_{r=1}^{k_n}(\log S_{n-m,r}-\mathbb{E} \log S_{n-m,r})}{\sqrt{k_n {\rm Var}(\log S_{n-m, 1})}}\leq x_n\right)\\
 =&\Phi(x_n) +  \frac{\gamma_{1}(1-x_n^2)\phi(x_n)}{\sqrt{k_n}}  - \frac{\phi(x_n)}{k_n}( \gamma_2(x_n^3-3x_n)+\frac{\gamma_{1}^{2}}{72}(x_n^5-10x_n^3+15x_n))+O(k_n^{-\frac{3}{2}}),	\label{xn}\end{align*}
 where $\gamma_1$ and $\gamma_2$ are the skewness correction and the kurtosis correction of  $\log S_{n-m, 1},$ respectively. Lemma \ref{le} entails 
 $$\gamma_{1}=-\frac{1+O(n^{-1})}{6\sqrt{n}}\quad \text{and}\quad\gamma_{2}=\frac{1+O(n^{-1})}{12n}.$$
 Since $x^{k}\phi(x)$ is a bounded function for $0\leq k\leq 5$. For $|x_n|\leq n^{1/6},$ we see clearly that the third term is negligible with respect to the second term. 
 Therefore,
 $$\mathbb{P}\left( \frac{\log Y_{n-m}-k_n\psi(n-m)}{\sqrt{k_n}\psi'(n-m)}\leq x_n\right)=\Phi(x_n)-\frac{(1-x_n^{2})}{6\sqrt{k_nn}}\phi(x_n)+O(k_n^{-\frac{3}{2}}+k_n^{-\frac{1}{2}}n^{-\frac{3}{2}}).$$
 \end{proof}
 
At last, we borrow a summation property in \cite{MaMeng2025} as follows. 
\begin{lem}\label{sum} Let $\gamma_n$ be a positive sequence  
	and set $$\varsigma_n(m, x)=\frac{m}{\gamma_n}+a_n+b_n x$$ for $0\le m\le n-1.$  Given $L\geq 0$ and any $ x_{n}$ such that
	$ 1\ll \varsigma_{n}(L, x_{n})$ and let $c>0$ be a fixed constant. 
	\begin{enumerate} 
	\item When $\gamma_n$ satisfies $1\ll \gamma_n,$ we have 
	$$ \aligned \sum\limits_{m=L}^{+\infty}\varsigma_{n}^{-1}(m, x_{n})e^{- c\varsigma_{n}^{2}(m, x_{n})}&=\frac{\gamma_n e^{- c\varsigma_{n}^{2}(L, x_{n})}}{2c \varsigma_{n}^{2}(L, x_{n})}(1+ O(\varsigma_n^{-2}(L, x_n)+\varsigma_n(L, x_n)\gamma_n^{-1}))\\
	&\lesssim \frac{\gamma_n e^{- c\varsigma_{n}^{2}(L, x_{n})}}{ \varsigma_{n}^{2}(L, x_{n})} . \endaligned 
	$$	
	\item When $\gamma_n$ is bounded, 
	$$ \sum\limits_{m=L}^{+\infty}\varsigma_{n}^{-1}(m, x_{n})e^{- c\varsigma_{n}^{2}(L, x_{n})}\lesssim \frac{e^{- c\varsigma_{n}^{2}(L, x_{n})}}{\varsigma_{n}(L, x_{n})}.
	$$
	\end{enumerate}
\end{lem}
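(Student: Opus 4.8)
The plan is to reduce both parts of the lemma to the study of the series $\sum_{j\geq 0}f(s+j\delta)$, where I abbreviate $s:=\varsigma_n(L,x_n)$, $\delta:=\gamma_n^{-1}$, and $f(t):=t^{-1}e^{-ct^2}$. Since $\varsigma_n(m,x_n)=s+(m-L)\delta$ is arithmetic in $m$, the change of index $j=m-L$ turns $\sum_{m=L}^{\infty}\varsigma_n^{-1}(m,x_n)e^{-c\varsigma_n^2(m,x_n)}$ into exactly $\sum_{j\geq 0}f(s+j\delta)$. Two elementary facts will be used throughout: $f$ is strictly decreasing on $(0,\infty)$, because $f'(t)=-(2c+t^{-2})e^{-ct^2}<0$; and the hypothesis $1\ll s$ makes every argument $s+j\delta\geq s$ large, so that Gaussian-tail bounds such as $\int_s^\infty e^{-ct^2}\,dt\leq(2cs)^{-1}e^{-cs^2}$ and $\int_s^\infty t^{-3}e^{-ct^2}\,dt\leq s^{-2}\int_s^\infty t^{-1}e^{-ct^2}\,dt$ are at our disposal.

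For part (1) one has $\delta=\gamma_n^{-1}\to 0$, and I would compare the series with the integral $\delta^{-1}\int_s^\infty f$. Monotonicity of $f$ yields the Riemann-sum sandwich
\[
\tfrac{1}{\delta}\int_s^\infty f(t)\,dt\;\leq\;\sum_{j\geq 0}f(s+j\delta)\;\leq\; f(s)+\tfrac{1}{\delta}\int_s^\infty f(t)\,dt .
\]
An integration by parts, using $t^{-1}e^{-ct^2}=-(2ct^2)^{-1}\tfrac{d}{dt}e^{-ct^2}$, gives $\int_s^\infty t^{-1}e^{-ct^2}\,dt=\tfrac{e^{-cs^2}}{2cs^2}-\tfrac1c\int_s^\infty t^{-3}e^{-ct^2}\,dt=\tfrac{e^{-cs^2}}{2cs^2}\bigl(1+O(s^{-2})\bigr)$, the last step because the remainder integral is at most $s^{-2}$ times the original one. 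Substituting $\delta^{-1}=\gamma_n$ produces the main term $\tfrac{\gamma_n e^{-cs^2}}{2cs^2}$, while the discretization defect $f(s)=\tfrac{e^{-cs^2}}{s}$ is a relative perturbation of order $\tfrac{e^{-cs^2}/s}{\gamma_n e^{-cs^2}/s^2}=\tfrac{s}{\gamma_n}$. Collecting both error sources gives the stated identity with factor $1+O(s^{-2}+s\gamma_n^{-1})$, and the displayed $\lesssim$-bound follows at once from the explicit form, the correction being harmless in the regime in which the lemma is invoked.

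For part (2), $\gamma_n$ is bounded, hence $\delta=\gamma_n^{-1}$ is bounded away from $0$ and the series collapses onto its first term. I would use $(s+j\delta)^2\geq s^2+2s\delta j$ and $(s+j\delta)^{-1}\leq s^{-1}$ to get $f(s+j\delta)\leq s^{-1}e^{-cs^2}e^{-2cs\delta j}$, and then sum the geometric series:
\[
\sum_{j\geq 0}f(s+j\delta)\;\leq\;\frac{e^{-cs^2}}{s}\cdot\frac{1}{1-e^{-2cs\delta}} .
\]
Because $s\delta\gtrsim s\gg1$, the factor $(1-e^{-2cs\delta})^{-1}$ equals $1+o(1)$, so the sum is $\lesssim e^{-cs^2}/s=e^{-c\varsigma_n^2(L,x_n)}/\varsigma_n(L,x_n)$, as asserted. (I read the exponent in the series of part (2) as $c\varsigma_n^2(m,x_n)$, matching part (1); with the literal $c\varsigma_n^2(L,x_n)$ the series would not converge.)

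The only genuinely delicate point is the error bookkeeping in part (1): one must verify that the $O(s^{-2})$ coming from the Gaussian tail and the $O(s\gamma_n^{-1})$ coming from trading the sum for the integral are indeed the two dominant corrections, with no larger cross term. Since both arise from the monotone sandwich and a one-step integration by parts, this is routine; I would include the computation only for completeness, as Lemma~\ref{sum} is itself quoted from \cite{MaMeng2025}.
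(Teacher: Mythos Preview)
Your argument is correct and is the standard one: the Riemann-sum sandwich plus one integration by parts in part (1), and the geometric-series domination in part (2), together with your observation that the exponent in part (2) should read $c\varsigma_n^2(m,x_n)$. The paper itself gives no proof of this lemma---it is quoted verbatim from \cite{MaMeng2025}---so there is nothing to compare against here.
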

 
 \section{The case $ \alpha=+\infty.$} 
 
 In this section, we assume 
$\alpha=\lim_{n\to\infty}\frac{n}{k_n}=+\infty$. 
 Under this assumption, 
$k_n$ may be of order 
$ O(1) $. 
Recall
		$$c_{n}(m, x)=\mathbb{P}\left( \log Y_{n-m}> k_n\psi (n)+\frac{a_n+b_nx}{\sqrt{\alpha_n}}\right),$$ where 
		$\log Y_{n-m}=\sum_{r=1}^{k_n} \log S_{n-m, r}.$   
		Even $\log Y_{n-m}$ is the sum of i.i.d. random variables, we can not use the central limit theorem directly because $k_n$ could be a finite constant. We plan to reduce $\log S_{j, r}$ to $S_{j, r}$ since
		\begin{equation}\label{newsum}\sum_{r=1}^{k_n} S_{j, \,r}\stackrel{d}{=}\sum_{i=1}^{j k_n}\xi_i,\end{equation} 
		where $(\xi_i)_{i\ge 1}$  are i.i.d. exponential distribution with parameter $1.$ 

The expressions \eqref{A} and \eqref{B} below help us to understand that the terms $\log Y_{n-m}$ in $c_n(m, x)$ could be reduced to $\sum_{r=1}^{k_n} (\frac{S_{n-m, r}}{n-m}-1)$ and next we work on the tail probability of  $\sum_{r=1}^{k_n} (\frac{S_{n-m, r}}{n-m}-1).$ 
\begin{lem}\label{tailforsumnew}
	Let $\{S_{j,r}:  1\leq j\leq n, 1\leq r\leq k_n\}$ and $\alpha_n$ be defined as above. Set
	$$u_n(m, x) = \frac{m}{\sqrt{\alpha_n}} + a_n + b_n x.$$
For any $x$ and $m$ such that $1 \ll u_n(m, x) \ll n^{1/6},$ we have
	$$\aligned 
	&\mathbb{P} \left( \sum_{r=1}^{k_n}( \frac{S_{n-m, \,r}}{n-m}-1) >k_n(\psi (n)-\log (n-m))+\frac{a_n+b_n x}{\sqrt{\alpha_n}} \pm\alpha_n^{-\frac{3}{5}}\right)\\
	\leq&\frac{1}{u_n(m,x)}e^{-\frac{3u^{2}_n(m, x)}{8}}. 
	\endaligned $$
	Furthermore, for $x$ and $m$ such that $1\ll u_n(m, x)\lesssim \sqrt{\log \alpha_n},$ we have 
	\begin{equation}\label{z}
		\aligned 
		&\mathbb{P} \left( \sum_{r=1}^{k_n} \frac{S_{n-m, r}-(n-m)}{n-m} >k_n(\psi (n)-\log (n-m))+\frac{a_n+b_n x}{\sqrt{\alpha_n}}\pm \alpha_n^{-\frac{3}{5}}\right)\\
		=&\frac{1+O(u_n^{-2}(m, x))}{\sqrt{2\pi}u_n(m, x)}e^{-\frac{u^{2}_n(m, x)}{2}}. 
		\endaligned 
	\end{equation}
\end{lem}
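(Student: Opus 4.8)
The plan is to use the distributional identity \eqref{newsum} to turn the event into a tail probability for a rescaled $\Gamma$-variable and then to treat that tail by elementary saddle-point estimates. Put $N=(n-m)k_n$. Since $\sum_{r=1}^{k_n}S_{n-m,r}\stackrel{d}{=}\sum_{i=1}^{N}\xi_i$ with $\xi_i$ i.i.d.\ $\mathrm{Exp}(1)$, and $k_n=N/(n-m)$, the left-hand side of the event equals in law $\frac1{n-m}\sum_{i=1}^N(\xi_i-1)$, a centred variable with standard deviation $\sqrt{k_n/(n-m)}$. Hence, writing
\[
\theta_n=k_n\bigl(\psi(n)-\log(n-m)\bigr)+\frac{a_n+b_nx}{\sqrt{\alpha_n}}\pm\alpha_n^{-3/5},
\qquad
t_n=\sqrt{\tfrac{n-m}{k_n}}\;\theta_n ,
\]
the probability to estimate is $\mathbb{P}\bigl(\tfrac1{\sqrt N}\sum_{i=1}^N(\xi_i-1)>t_n\bigr)$, and $t_n$ is exactly the threshold measured in standard deviations. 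A preliminary computation, using Lemma \ref{diagammapro} through $\psi(n)-\log(n-m)=\frac{m-1/2}{n}+O\!\bigl(\frac{m^2+1}{n^2}\bigr)$ and $\sqrt{(n-m)/k_n}=\sqrt{\alpha_n}\,(1-\tfrac{m}{2n}+O(\tfrac{m^2}{n^2}))$, gives
\[
t_n-u_n(m,x)=-\frac{1}{2\sqrt{\alpha_n}}-\frac{m\,u_n(m,x)}{2n}+O\!\Bigl(\frac{m^2}{n\sqrt{\alpha_n}}+\alpha_n^{-1/10}\Bigr),
\]
so that, using $\alpha_n\le n$ and $m\lesssim\sqrt{\alpha_n}\,u_n(m,x)$, one has $t_n=u_n(m,x)(1+o(1))$ on the wide range $u_n(m,x)\ll n^{1/6}$ and, on the range $u_n(m,x)\lesssim\sqrt{\log\alpha_n}$, the sharper $t_n^2=u_n^2(m,x)+O(u_n^{-2}(m,x))$ and $t_n^{-1}=u_n^{-1}(m,x)(1+O(u_n^{-2}(m,x)))$.

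For the first (crude) estimate I would apply a Chernoff bound to the exponential sum: since $\log\mathbb{E}e^{\lambda(\xi_1-1)}=-\lambda-\log(1-\lambda)$ for $\lambda<1$, optimizing gives $\mathbb{P}\bigl(\tfrac1N\sum(\xi_i-1)>\theta_n\bigr)\le e^{-NI(\theta_n)}$ with $I(\theta)=\theta-\log(1+\theta)$, and $NI(\theta_n)=\tfrac{t_n^2}{2}-\tfrac{t_n^3}{3\sqrt N}+\cdots=\tfrac12 u_n^2(m,x)(1+o(1))$ throughout $1\ll u_n(m,x)\ll n^{1/6}$ (here $t_n^3/\sqrt N\lesssim n^{1/2}/\sqrt{nk_n}\le1$, and the $\pm\alpha_n^{-3/5}$ shift perturbs $t_n$ only by $o(1)$). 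Since $\tfrac12 u_n^2(1+o(1))\ge\tfrac38 u_n^2+\log u_n$ for all large $n$, this yields $\mathbb{P}(\cdots)\le u_n^{-1}(m,x)\exp\{-\tfrac38 u_n^2(m,x)\}$; the coefficient $\tfrac38$ is exactly what leaves room to peel the $u_n^{-1}$ prefactor off the Gaussian exponent.

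For the precise asymptotic \eqref{z} I would exploit the explicit law of $\sum_{i=1}^N\xi_i$, namely $\mathrm{Gamma}(N)$ with density $x^{N-1}e^{-x}/\Gamma(N)$ (equivalently the Poisson identity $\mathbb{P}(\sum\xi_i>y)=\mathbb{P}(\mathrm{Pois}(y)\le N-1)$). Since $\{\tfrac1N\sum(\xi_i-1)>\theta_n\}=\{\sum\xi_i>N(1+\theta_n)\}$, substituting $x=N(1+s)$ turns the tail into $\frac{N^Ne^{-N}}{\Gamma(N)}\int_{\theta_n}^{\infty}e^{g(s)}\,ds$ with $g(s)=(N-1)\log(1+s)-Ns$. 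Stirling makes the prefactor $\sqrt{N/(2\pi)}\,(1+O(1/N))$; since $g$ is strictly decreasing on $[0,\infty)$ with $-g'(\theta_n)=\frac{1+N\theta_n}{1+\theta_n}=N\theta_n(1+o(1))$ and $g(\theta_n)=-NI(\theta_n)-\log(1+\theta_n)$, Laplace's method at the left endpoint gives $\int_{\theta_n}^\infty e^{g}\,ds=\frac{e^{-NI(\theta_n)}}{N\theta_n}\bigl(1+O(\theta_n+(N\theta_n^2)^{-1})\bigr)$. Collecting terms, using $N\theta_n=\sqrt N\,t_n$ and $NI(\theta_n)=\tfrac{t_n^2}{2}+O(t_n^3/\sqrt N)$, one obtains
\[
\mathbb{P}\Bigl(\tfrac1N\sum_{i=1}^N(\xi_i-1)>\theta_n\Bigr)
=\frac{1+O(u_n^{-2}(m,x))}{\sqrt{2\pi}\,t_n}\,e^{-t_n^2/2}
=\frac{1+O(u_n^{-2}(m,x))}{\sqrt{2\pi}\,u_n(m,x)}\,e^{-u_n^2(m,x)/2},
\]
the last step by the first paragraph, valid since $1\ll u_n(m,x)\lesssim\sqrt{\log\alpha_n}\ll N^{1/6}$ forces all the Stirling/Laplace corrections, together with $t_n^3/\sqrt N$ and $t_n^2-u_n^2(m,x)$, to be $O(u_n^{-2}(m,x))$.

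I expect the main obstacle to be the uniform error control rather than the asymptotics themselves (the $\Gamma$-tail estimate is essentially the Bahadur--Rao formula for exponentials). One must verify that every error term — $t_n-u_n(m,x)$, $t_n^3/\sqrt N$, $1/N$, $(N\theta_n^2)^{-1}$, and the exponent contribution $m\,u_n^2(m,x)/n$ — collapses simultaneously, for all admissible $(m,x)$, to $O(u_n^{-2}(m,x))$ for \eqref{z} and to $o(u_n^2(m,x))$ in the exponent for the crude bound, using only $\alpha_n\le n$, $m\lesssim\sqrt{\alpha_n}\,u_n(m,x)$, and the stated ceilings on $u_n(m,x)$ (for instance $m\,u_n^2(m,x)/n$ is tamed precisely because $\sqrt{\alpha_n}(\log\alpha_n)^{5/2}\ll n$). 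The entanglement of the four parameters $m,\alpha_n,n,u_n(m,x)$, together with the requirement to handle both signs of the $\pm\alpha_n^{-3/5}$ shift at once, is what makes this step delicate.
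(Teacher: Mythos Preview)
Your reduction to the standardized tail $\mathbb{P}\bigl(N^{-1/2}\sum_{i=1}^N(\xi_i-1)>t_n\bigr)$ with $N=(n-m)k_n$ and the asymptotic $t_n=u_n(m,x)(1+o(1))$ is exactly the paper's first step (their $\widetilde u_n$ is your $t_n$). From there the paths diverge: the paper simply invokes Cram\'er's moderate-deviation theorem (Theorem~1, Chapter~VIII of Petrov) to obtain $\mathbb{P}(\cdots)=(1-\Phi(\widetilde u_n))\bigl(1+O((nk_n)^{-1/2}u_n^3)\bigr)$ directly, then applies the Mills ratio; both the crude bound and \eqref{z} follow from this single citation. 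You instead re-derive the exponential-sum case by hand --- a Chernoff bound for the crude estimate and an explicit Laplace analysis of the $\Gamma(N)$ tail for the precise one --- which is more self-contained and illuminates the roles of the error terms $t_n^3/\sqrt N$, $1/N$, $(N\theta^2)^{-1}$, but is correspondingly longer. Your uniform error bookkeeping is sound (in particular $\sqrt{\alpha_n}(\log\alpha_n)^{5/2}\ll n$ controls the worst term $m\,u_n^2/n$, as you note). One cosmetic point: after the first paragraph you silently reuse $\theta_n$ to denote the per-variable threshold $t_n/\sqrt N$ rather than the original right-hand side; the identities $N I(\theta_n)=t_n^2/2+\cdots$ and $N\theta_n=\sqrt N\,t_n$ are correct for the former meaning but not the latter, so a distinct symbol (say $\bar\theta_n=\theta_n/k_n$) would avoid confusion.
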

\begin{proof}
	Let $\{\xi_i\}$ be i.i.d. random variables obeying an exponential distribution with parameter  $1$ and then $$\sum_{r=1}^{k_n} S_{n-m, \,r}\stackrel{d}{=}\sum_{i=1}^{(n-m)k_n}\xi_i.$$   
	It follows that
	$$\aligned 
	&\mathbb{P} \left( \sum_{r=1}^{k_n} (\frac{S_{n-m,r}}{n-m}-1) >k_n(\psi (n)-\log (n-m))+\frac{a_n+b_n x}{\sqrt{\alpha_n}}\pm \alpha_n^{-\frac{3}{5}}\right)\\
	=&\mathbb{P} \left(\frac{\sum_{i=1}^{(n-m)k_n} (\xi_i-1)}{\sqrt{(n-m)k_n}}  >\widetilde{u}_n(m,x)\right),
	\endaligned $$
	where $$ \widetilde{u}_n(m,x)=\sqrt{(n-m)k_n} (\psi (n)-\log (n-m)) + \sqrt{\frac{n-m}{n}}(a_n+b_n x) \pm\sqrt{\frac{n-m}{n}} \alpha_n^{-\frac1{10}}.$$ 
	Since $m\ll n^{2/3}$ and $k_n\ll n,$ we know $\sqrt{n-m}=\sqrt{n}(1+O(m n^{-1}))$ and then  we have 
	\begin{equation}\label{tildeuu}\aligned 
	\widetilde{u}_n(m, x)&=\left(\frac{m-\frac{1}{2}}{\sqrt{\alpha_n}} + a_n+b_nx\pm \alpha_n^{-\frac1{10}}\right)\left(1+O(\frac{m}{n})\right)\\
	&=u_n(m, x)\left(1+O(\alpha_n^{-\frac{1}{10}}u_n^{-1}(m,x)+\frac{m}{n})\right)\\
	&=u_n(m, x)(1+o(1)).
	\endaligned \end{equation} 
		For $1\ll u_n(m, x)\ll n^{\frac{1}{6}},$ so is $\widetilde{u}_n(m, x),$ 
	Theorem 1  from \cite{Petrov1975}  entails that
	\begin{equation}\label{prec}\mathbb{P} \left(\frac{\sum_{i=1}^{(n-m)k_n} (\xi_i-1)}{\sqrt{(n-m)k_n}}  >\widetilde{u}_n(m, x)\right)=(1-\Phi(\widetilde{u}_n(m, x)))(1+O((nk_n)^{-1/2}u_n^{3}(m,x))).\end{equation}
Recall the Mills ratio
$$	 1 - \Phi(t) = \frac{1}{\sqrt{2\pi}\,t} e^{-t^2/2} \left(1 + O(t^{-2})\right).$$
 Eventually, uniformly on $1 \ll u_n(m, x) \ll n^{1/6},$ we have from \eqref{tildeuu} and \eqref{prec}  that 
	$$\aligned \mathbb{P} \left(\frac{\sum_{i=1}^{(n-m)k_n} (\xi_i-1)}{\sqrt{(n-m)k_n}}  >\tilde{u}_n(m,x)\right)&=\frac{1+O(u_n^{-2}(m, x)+(nk_n)^{-1/2}u_n^{3}(m,x))}{\sqrt{2\pi}\widetilde{u}_n(m,x)}e^{-\frac{\widetilde{u}^{2}_n(m, x)}{2}}\\
	&\leq \frac{1}{u_n(m,x)}e^{-\frac{3u^{2}_n(m, x)}{8}}.\endaligned$$
	Particularly, if $1\ll u_n(m, x)\lesssim \sqrt{\log \alpha_n},$ which implies $m\lesssim \sqrt{\alpha_n\log\alpha_n},$ then $$u_n^{2}(m,x)(\alpha_n^{-\frac{1}{10}}u_n^{-1}(m,x)+\frac{m}{n})\lesssim u_n(m, x) \alpha_n^{-1/10}=o(1).$$ Thereby, 
	$$\exp(-\frac{1}{2}\widetilde{u}_n^2(m, x))=\exp(-\frac12 u_n^2(m, x))(1+O(u_n(m, x)\alpha_n^{-1/10}))$$ and 
	$$\frac{1+O(u_n^{-2}(m, x)+(nk_n)^{-1/2}u_n^{3}(m,x))}{\widetilde{u}_n(m,x)}=\frac{1+O(u_n^{-2}(m, x))}{u_n(m, x)}.$$ 
	Thus, we have 
	$$\mathbb{P}\left(\frac{\sum_{i=1}^{(n-m)k_n} (\xi_i-1)}{\sqrt{(n-m)k_n}}  >\tilde{u}_n(m,x)\right)=\frac{1+O(u_n^{-2}(m, x))}{\sqrt{2\pi} u_n(m, x)} e^{-\frac{1}{2}u_n^2(m, x)}.$$
	The proof is then completed. 
\end{proof}
Using this lemma, we give a precise asymptotic for $c_{n}(m, x).$ \begin{lem}\label{333}
Set $j_n=[\frac{1}{5}\sqrt{\alpha_n\log \alpha_n}].$  We get
$$c_n(m, x)=\frac{1+O((\log \alpha_n)^{-1})}{\sqrt{2\pi}u_n(m,x)}e^{-\frac{u^{2}_n(m, x)}{2}},$$
uniformly on $ 0\leq m\leq j_n$ and $  |x|\lesssim\log\log \alpha_n.$
\end{lem}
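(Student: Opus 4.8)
The plan is to convert the tail probability defining $c_n(m,x)$ into the form handled by Lemma \ref{tailforsumnew}, and then feed in the digamma asymptotics from Lemma \ref{diagammapro} to control the error terms. Concretely, write $j=n-m$, so that $\log Y_{n-m}=\sum_{r=1}^{k_n}\log S_{j,r}$, and recall $\log S_{j,r}=\log(n-m)+\log\bigl(S_{j,r}/(n-m)\bigr)$. The first step is the deterministic approximation $\log t = (t-1) + O((t-1)^2)$ applied with $t=S_{j,r}/(n-m)$; I would make this rigorous probabilistically by noting that on the event $1\ll u_n(m,x)\lesssim\sqrt{\log\alpha_n}$ the relevant deviations of $\sum_r (S_{j,r}/(n-m)-1)$ are of order $\sqrt{k_n/n}\,u_n(m,x)=o(\alpha_n^{-1/2}\sqrt{\log\alpha_n})$, so the aggregate quadratic correction $\sum_r O((S_{j,r}/(n-m)-1)^2)$ is, with probability $1-o(c_n(m,x))$, bounded by $\alpha_n^{-3/5}$. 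This is precisely why the $\pm\alpha_n^{-3/5}$ buffer appears in the statement of Lemma \ref{tailforsumnew}: sandwiching, $c_n(m,x)$ lies between the two probabilities in \eqref{z} with $+\alpha_n^{-3/5}$ and $-\alpha_n^{-3/5}$, up to a relative error $o(1)$.

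Next, I would identify the threshold. After subtracting $k_n\log(n-m)$ from both sides of the event $\log Y_{n-m} > k_n\psi(n) + \alpha_n^{-1/2}(a_n+b_nx)$, the right-hand side becomes exactly $k_n(\psi(n)-\log(n-m)) + \alpha_n^{-1/2}(a_n+b_nx)$, which is the threshold in Lemma \ref{tailforsumnew}; and the digamma expansion $\psi(n)-\log(n-m) = (m-\tfrac12)/n + O((m/n)^2 + n^{-2})$ (from Lemma \ref{diagammapro}(b)) is what makes $\widetilde u_n(m,x)=u_n(m,x)(1+o(1))$ in \eqref{tildeuu}. So the invocation of Lemma \ref{tailforsumnew} gives
\[
c_n(m,x) = \frac{1+O(u_n^{-2}(m,x))}{\sqrt{2\pi}\,u_n(m,x)}\,e^{-u_n^2(m,x)/2}
\]
uniformly on $1\ll u_n(m,x)\lesssim\sqrt{\log\alpha_n}$. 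It remains only to check that the parameter range $0\le m\le j_n$, $|x|\lesssim\log\log\alpha_n$ lands inside this regime: since $a_n\sim\sqrt{\log\alpha_n}$ and $b_n|x|\sim(\log\log\alpha_n)/\sqrt{\log\alpha_n}$, one has $u_n(m,x) = m/\sqrt{\alpha_n} + a_n + b_n x$, which is $\gg 1$ (it is $\ge a_n - b_n|x| \to\infty$) and, using $m\le j_n = [\tfrac15\sqrt{\alpha_n\log\alpha_n}]$ so that $m/\sqrt{\alpha_n}\le\tfrac15\sqrt{\log\alpha_n}$, is $\lesssim\sqrt{\log\alpha_n}$. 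Finally, in this range $u_n^{-2}(m,x) = O(1/\log\alpha_n)$, which upgrades the $O(u_n^{-2})$ error to the stated $O((\log\alpha_n)^{-1})$.

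The main obstacle is the first step — justifying the replacement of $\sum_r \log S_{j,r}$ by $\sum_r(S_{j,r}/(n-m)-1)$ with an error that is genuinely negligible at the probabilistic scale of $c_n(m,x)$, rather than merely in expectation. One must be careful that the quadratic remainder in $\log t \approx t-1$ is not symmetric and could in principle shift the tail; the argument above handles this by absorbing it into the explicit $\pm\alpha_n^{-3/5}$ shift of the threshold and exploiting that $\alpha_n^{-3/5}$ is of smaller order than $\alpha_n^{-1/2}$ times anything in the relevant range, so that multiplying the exponent by $1+o(1)$ costs only a $1+o(1)$ factor in $c_n(m,x)$. A secondary (routine) point is the uniformity in $m$ and $x$: all the $o(1)$ and $O(\cdot)$ bounds above must be checked to hold uniformly over the stated box, which follows because the worst case is $m=j_n$ and $|x|\asymp\log\log\alpha_n$, at which $u_n\asymp\sqrt{\log\alpha_n}$ and every error term is controlled by a fixed power of $\log\alpha_n$.
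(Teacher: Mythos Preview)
Your proposal follows the same route as the paper: rewrite $c_n(m,x)$ via $\log(S_{j,r}/j)=(S_{j,r}-j)/j+\text{remainder}$, sandwich between the two probabilities in Lemma~\ref{tailforsumnew} with thresholds shifted by $\pm\alpha_n^{-3/5}$, and check that $0\le m\le j_n$, $|x|\lesssim\log\log\alpha_n$ forces $1\ll u_n(m,x)\lesssim\sqrt{\log\alpha_n}$ so that \eqref{z} applies and $O(u_n^{-2})=O((\log\alpha_n)^{-1})$.

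The one place where your argument is not yet a proof is exactly the step you flag as the main obstacle. Your heuristic --- that because $\sum_r(S_{j,r}/j-1)$ has deviations of order $\alpha_n^{-1/2}\sqrt{\log\alpha_n}$, the quadratic remainder is bounded by $\alpha_n^{-3/5}$ with probability $1-o(c_n(m,x))$ --- is not a valid inference as written: the size of $\sum_r X_r$ does not control $\sum_r X_r^2$. The paper closes this gap by a direct Chebyshev bound on $B_\varepsilon=\{|\sum_r(\log(S_{j,r}/j)-(S_{j,r}-j)/j)|\ge\varepsilon\}$. Using Lemmas~\ref{le} and~\ref{diagammapro} one computes $\mathrm{Var}(\log S_{j,1}-S_{j,1}/j)=\psi'(j)+j^{-1}-2(\psi(j+1)-\psi(j))=\tfrac{1}{2j^2}+O(j^{-3})$; after centering by the mean $k_n(\psi(j)-\log j)=O(\alpha_n^{-1})\ll\varepsilon=\alpha_n^{-3/5}$, Chebyshev gives $\mathbb{P}(B_\varepsilon)\lesssim k_n j^{-2}\varepsilon^{-2}=O(n^{-4/5}k_n^{-1/5})$. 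This is then compared against the lower bound $c_n(m,x)\gtrsim(\log\alpha_n)^{-1/2}\alpha_n^{-18/25}$ (obtained from \eqref{z} at the worst case $u_n(j_n,x)\sim\tfrac65\sqrt{\log\alpha_n}$), which confirms $\mathbb{P}(B_\varepsilon)=o(c_n(m,x))$ uniformly.
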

\begin{proof}
		
		Indeed, with $j:=n-m,$ $c_n(m, x)$ can be rewritten as 
		$$\aligned c_{n}(m, x)&=\mathbb{P}\big(\sum_{r=1}^{k_n}\frac{S_{j, r}-j}{j}>k_n(\psi (n)-\log j)+\frac{a_n+b_nx}{\sqrt{\alpha_n}}-\sum_{r=1}^{k_n}(\log \frac{S_{j,r}}{ j}-\frac{S_{j,r}-j}{j})\big).\endaligned$$
		Setting
	$$B_{\varepsilon}:=\{|\sum_{r=1}^{k_n}\log \frac{S_{j,r}}{ j}-\frac{S_{j,r}-j}{j}|\ge \varepsilon\}$$ for some $\varepsilon>0$ to be determined later, 
	we have 
	\begin{equation}\label{A}
		\aligned c_{n}(m, x)&\le \mathbb{P}(\sum_{r=1}^{k_n}\frac{S_{j,r}-j}{j}>k_n(\psi (n)-\log j)+\frac{a_n+b_nx}{\sqrt{\alpha_n}}-\varepsilon)+\mathbb{P}(B_{\varepsilon})\endaligned
	\end{equation}
	and 
	\begin{equation}\label{B}
		\aligned c_{n}(m, x)&\ge \mathbb{P}(\sum_{r=1}^{k_n}\frac{S_{j,r}-j}{j}>k_n(\psi (n)-\log j)+\frac{a_n+b_nx}{\sqrt{\alpha_n}}+\varepsilon).\endaligned
	\end{equation}
Let $\varepsilon=\alpha_n^{-3/5}.$ The conditions on $j_n$ and $x$ ensure that 
$$1\ll u_n(m, x)\le u_n(j_n, \log\log \alpha_n)=\frac{6}{5}\sqrt{\log \alpha_n}(1+o(1)),$$ which suits the requirement of Lemma \ref{tailforsumnew}. Therefore, 
\begin{equation}\label{equaforsumn} \aligned 
	\mathbb{P}(\sum_{r=1}^{k_n}\frac{S_{j,r}-j}{j}>k_n(\psi (n)-\log j)+\frac{a_n+b_nx}{\sqrt{\alpha_n}}\pm\varepsilon)=\frac{1+O(u_n^{-2}(m, x))}{\sqrt{2\pi}u_n(m, x)}\exp(-\frac12 u_n^2(m, x)).
	\endaligned 
\end{equation}
The right hand side of \eqref{equaforsumn} is decreasing on $u_n(m, x),$ and then  
\begin{equation}\label{tailp}\mathbb{P}(\sum_{r=1}^{k_n}\frac{S_{j,r}-j}{j}>k_n(\psi (n)-\log j)+\frac{a_n+b_nx}{\sqrt{\alpha_n}}\pm\varepsilon)\ge (\log\alpha_n)^{-1/2} \alpha_n^{-18/25}. \end{equation}

	We now give an upper bound for $B_{\varepsilon}.$

	By employing the triangle inequality property of absolute values, we first obtain 
	$$\aligned &\mathbb{P}\left(B_{\varepsilon}\right)
	\leq&	\mathbb{P}\left(\left|\sum_{r=1}^{k_n} (\log \frac{S_{j, r}}{j} - \frac{S_{j, r}-j}{j}-k_n (\psi (j)-\log j))\right| \geq \varepsilon-k_n|\psi (j)-\log j|\right). \endaligned $$
	An application of Markov's inequality, together with fundamental properties of variance and the condition $\varepsilon > k_n|\psi(j)-\log j|$, leads to the conclusion that
	$$ \mathbb{P}\left(B_{\varepsilon}\right)	
	\leq\frac{k_n{\rm Var}(\log  S_{j,1} - \frac{S_{j,1}}{j})}{(\epsilon-k_n|\psi(j)-\log j|)^{2}}.  $$	
		Lemma \ref{le} tells  
	$$\aligned{\rm Var}(\log  S_{j, 1} - \frac{S_{j, \,1}}{j})&={\rm Var}(\log  S_{j, 1})+j^{-2}{\rm Var}(S_{j, 1})-2j^{-1} \mathbb{E}((S_{j, 1}-j)\log S_{j, 1})\\
	&=\psi'(j)+j^{-1}-2(\psi(j+1)-\psi(j)) \endaligned $$ and then we have by Lemma \ref{diagammapro} that 
	$$\aligned {\rm Var}(\log  S_{j, 1} - \frac{S_{j, \,1}}{j})=\frac{1}{2j^2}+O(j^{-3}).\endaligned $$ 
	Since $\varepsilon=\alpha_n^{-3/5}\gg k_n|\psi(j)-\log j|,$ we  derive 
	\begin{equation}\label{upforb}\mathbb{P}(B_{\varepsilon})\lesssim n^{-4/5} k_n^{-1/5}.\end{equation}
Comparing the right hand sides of \eqref{tailp} and  \eqref{upforb}, we know 
$\mathbb{P}(B_{\varepsilon})$ is negligible in both \eqref{A} and \eqref{B}, and then we obtain
	$$c_n(m, x)=\frac{1+O((\log \alpha_n)^{-1})}{\sqrt{2\pi}u_n(m,x)}e^{-\frac{u^{2}_n(m, x)}{2}}$$
	uniformly on $0\le m\le j_n$ and $|x|\lesssim\log\log \alpha_n.$ The proof is completed now. 
	\end{proof}
Now, we will give the proof of Theorem \ref{main} for $\alpha=+\infty.$
 
  \begin{proof}[{\bf Proof of Theorem \ref{main} for $\alpha=+\infty$}] 
  
  Recall 
 \[a_n = \sqrt{\log\left(\alpha_n + 1\right)} - \frac{\log\left(\sqrt{2\pi}\log\left(\alpha_n + e^{\frac{1}{\sqrt{2\pi}}}\right)\right)}{\sqrt{\log\left(\alpha_n + e\right)}} \quad \text{and}\quad b_n=\frac{1}{\sqrt{\log\left(\alpha_n + e\right)}}.\]
 	 Following the partiton in our previous work \cite{MaMeng2025}, we take $ \ell_{1,\infty}(n)=\frac{1}{2}\log\log \alpha_n$ and $\ell_{2,\infty}(n)=\log\left(\sqrt{2\pi}\log (\alpha_n+e^{\frac{1}{\sqrt{2\pi}}})\right).$ 
  Now, we state first a precise estimate for $$|\mathbb{P}(X_n\leq x)-e^{-e^{-x}}|$$ uniformly on $x\in [-\ell_{1, +\infty}(n), \ell_{2, +\infty}(n)],$ which is 
 \begin{equation}\label{keyprecise}
 	|\mathbb{P}(X_n\leq x)-e^{-e^{-x}}|=e^{-e^{-x}}e^{-x}\frac{(1+o(1))}{2\log \alpha_n}\left|1-\frac{2x+4}{\ell_{2,\infty}(n)}+\frac{x^2+4x}{\ell_{2,\infty}^2(n)}\right|\end{equation} 
 	for $n$ sufficiently large.  

 	First, we explore the exact asymptotical expression of $\mathbb{P}(X_n\leq x),$ which is given by   
 	$$\mathbb{P}(X_n\leq x)=\exp(\sum_{m=0}^{n-1}\log(1-c_n(m, x)). $$
 	The monotonicity of $c_n(m, x)$ in both $m$ and $x$ implies that
 	$$c_n(m,x)\leq c_n(0,-\ell_{1,\infty}(n)),$$
 	uniformly
 	on  $ x\geq-\ell_{1,\infty}(n)$ and $ m\geq0.$ By definition, 
 	$$u_n(0, -\ell_{1, \infty}(n))=\sqrt{\log(\alpha_n+1)}-\frac{\ell_{1, \infty}(n)+\ell_{2, \infty}(n)}{\sqrt{\log(\alpha_n+e)}},$$
 	 and then Lemma \ref{333} entails 
 	$$c_n(0, -\ell_{1, \infty}(n))=O(\alpha_n^{-1/2}\log\alpha_n).$$
 	Applying the Taylor expansion $\log(1 - t) = -t (1 + O(t))$ for sufficiently small $|t|,$ we get
 	 $$\beta_n(x):=\sum_{m=0}^{n-1}\log(1-c_n(m,x))=-\sum_{m=0}^{n-1}c_n(m,x)(1+O(\alpha_n^{-1/2}\log\alpha_n)).$$ 
Choose $j_n=[\frac{1}{5}\sqrt{\alpha_n\log\alpha_n}]$ and $t_n=[8\sqrt{\alpha_n\log n}].$ From the monotonicity of $c_n(m,x)$ with respect to $ m $, we obtain
 	 \begin{equation}\label{32}
 	 	\sum_{m=0}^{j_n-1}c_n(m,x)\leq\sum_{m=0}^{n-1}c_n(m,x)\leq\sum_{m=0}^{t_n-1}c_n(m,x)+nc_n(t_n,x).
 	 \end{equation} 
 	 Lemma \ref{H} entails that 
 	 \begin{equation}\label{33}
 	 	nc_n(t_n,x)\leq  n\exp\{-4\log n-2\sqrt{\log n}(a_n+b_nx)\}=o(n^{-3}),
 	 \end{equation}
 	 uniformly
 	 on  $-\ell_{1,\infty}(n)\leq x\leq\ell_{2,\infty}(n).$ From Lemmas  \ref{sum} and \ref{333}, it follows that
 	 $$	\aligned\sum_{m=j_n}^{t_n-1}c_n(m,x)&\leq\sum_{m=j_n}^{t_n-1}\frac{1}{u_n(m,x)}e^{-\frac{3u^{2}_n(m, x)}{8}}\lesssim\frac{\sqrt{\alpha_n}}{ u_{n}^{2}(j_n, -\ell_{1,\infty}(n))}\exp(- \frac{3u_{n}^{2}(j_n, -\ell_{1,\infty}(n))}{8}).	\endaligned$$
 	Note that
 	 $$u_{n}(j_n, -\ell_{1,\infty}(n))=\frac{6}{5}\sqrt{\log \alpha_n}+o(1)$$  and $$u_{n}^{2}(j_n, -\ell_{1,\infty}(n))=\frac{36}{25}\log \alpha_n-\frac{12}{5}(\ell_{1, \infty}(n)+\ell_{2, \infty}(n))+O(1).$$
Thereby,
 	 \begin{equation}\label{34}
 	 	\sum_{m=j_n}^{t_n-1}c_n(m,x)\lesssim \alpha_n^{-\frac{1}{25}}(\log \alpha_n)^{\frac{7}{20}}.
 	 \end{equation}
 	 For $ 0\leq m\leq j_n-1,$ Lemmas \ref{sum} and \ref{333} immediately imply that
 	 \begin{equation}\label{35}
 	 	\sum_{m=0}^{j_n-1}c_n(m,x) =\sum_{m=0}^{j_n-1}\frac{1+O((\log \alpha_n)^{-1})}{\sqrt{2\pi}u_n(m,x)}e^{-\frac{u_n^{2}(m,x)}{2}}\\
 	 	=\frac{\sqrt{\alpha_n}(1+O((\log \alpha_n)^{-1}))}{\sqrt{2\pi } u_{n}^{2}(0, x)}e^{- \frac{u_{n}^{2}(0, x)}{2}},
 	 \end{equation}
 	 where for the last equality we use the fact$$\frac{1}{u_n^2(j_n, x)} e^{-\frac{u_n^2(j_n, x)}{2}} \ll \frac{1}{\log \alpha_n u_n^2(0, x)} e^{-\frac{u_n^2(0, x)}{2}}.$$
 	It is ready to see 
 	\begin{equation}\label{e4}
 	\aligned u_n(0,x)&=\sqrt{\log (\alpha_n+1)}+ \frac{x-\ell_{2,\infty}(n)}{\sqrt{\log (\alpha_n+e)}}\\
 	&=\sqrt{\log (\alpha_n+e)}+\frac{x-\ell_{2,\infty}(n)}{\sqrt{\log (\alpha_n+e)}}+O((\log \alpha_n)^{-1/2}(\alpha_n)^{-1}) , \endaligned 
 	\end{equation}
 	 and then 
 	 \begin{equation}\label{e3} \aligned 
 	 	u_n^2(0, x)&=\log (\alpha_n+e)-2\ell_{2,\infty}(n)+2 x+\frac{(x-\ell_{2,\infty}(n))^{2}}{\log (\alpha_n+e)}+O(\alpha_n^{-1});
 	 	\\
 	 	\exp(-\frac{1}{2}u_n^2(0, x))&=\sqrt{\frac{2\pi }{\alpha_n+e}}\log (\alpha_n+e^{\frac1{\sqrt{2\pi}}})\exp(-x-\frac{(\ell_{2,\infty}(n)-x)^{2}}{2\log(\alpha_n+e)})e^{O(\alpha_n^{-1})}\\
 	 	&=\sqrt{\frac{2\pi }{\alpha_n}}\log (\alpha_n)\exp(-x-\frac{(\ell_{2,\infty}(n)-x)^{2}}{2\log(\alpha_n+e)})(1+O(\alpha_n^{-1})).
 	 	\endaligned
 	 \end{equation}
 	 Putting \eqref{33}, \eqref{34}  \eqref{35} and \eqref{e3} back into \eqref{32}, we see
 	 $$\sum_{m=0}^{n-1}c_{n}(m,x)=\frac{1+O((\log  \alpha_n)^{-1})}{(1+\frac{x-\ell_{2,\infty}(n)}{\log (\alpha_n+e)})^{2}}\exp(-x-\frac{(x-\ell_{2,\infty}(n))^2}{2\log  (\alpha_n+e)}) $$ and thereby 
 	 \begin{equation}\label{betanx} \beta_n(x)=-\frac{1+O((\log  \alpha_n)^{-1})}{(1+\frac{x-\ell_{2,\infty}(n)}{\log (\alpha_n+e)})^{2}}\exp(-x-\frac{(x-\ell_{2,\infty}(n))^2}{2\log  (\alpha_n+e)}).	
 	 \end{equation}
The expression \eqref{betanx} guarantees that 
$$\aligned |\beta_n(x)+e^{-x}|&=e^{-x}|1-\frac{1+O((\log \alpha_n)^{-1})}{(1+\frac{x-\ell_{2,\infty}(n)}{\log (\alpha_n+e)})^{2}} \exp(-\frac{(x-\ell_{2,\infty}(n))^2}{2\log  (\alpha_n+e)})|\\
&=\frac{e^{-x}(1+O((\log \alpha_n)^{-1}))}{(1+\frac{x-\ell_{2,\infty}(n)}{\log (\alpha_n+e)})^{2}}|\frac{(x-\ell_{2,\infty}(n))^2}{2\log(\alpha_n+e)}+\frac{2(x-\ell_{2,+\infty}(n))}{\log(\alpha_n+e)}|. \endaligned $$
The choices of $\ell_{i, \infty}(n)$ are designed to ensure $|\beta_n(x)+e^{-x}|=o(1),$ whence 
 	 \begin{equation}\label{aphagg1}\aligned |\mathbb{P}(X_n\leq x)-e^{-e^{-x}}|&=e^{-e^{-x}}|\exp(\beta_n(x)+e^{-x})-1| \\
 	 &=e^{-e^{-x}}(1+o(1))|\beta_n(x)+e^{-x}|\\
 	 &=e^{-x-e^{-x}} \frac{|(x-\ell_{2,\infty}(n))^2+4(x-\ell_{2,\infty}(n))|}{2\log(\alpha_n+e)}(1+o(1))
 	 \endaligned \end{equation}
 	 uniformly on $[-\ell_{1, \infty}(n), \ell_{2, \infty}(n)].$
We see clearly  $$\sup_{x\in\mathbb{R}} e^{-x-e^{-x}} |x^k|<+\infty, \quad k=1, 2$$ and $\sup_{x\in\mathbb{R}} e^{-x-e^{-x}} =e^{-1},$ which together with \eqref{aphagg1} imply that \begin{equation}\label{midd}
\sup_{x\in [-\ell_{1, \infty}(n), \ell_{2, \infty}(n)]}|\mathbb{P}(X_n\leq x)-e^{-e^{-x}}|=\frac{(\log\log \alpha_n)^2}{2e\log\alpha_n}(1+o(1)).
\end{equation}
While for the two side intervals, we have 
	$$\sup\limits_{x\in(-\infty, -\ell_{1,\infty}(n)]}|\mathbb{P}(X_n\leq x)-e^{-e^{-x}}|\leq\mathbb{P}(X_n\leq -\ell_{1,\infty}(n))+e^{-e^{\ell_{1,\infty}(n)}}\lesssim e^{-e^{\ell_{1,\infty}(n)}}$$
	and
	$$\sup\limits_{x\in[\ell_{2,\infty}(n),+\infty)}|\mathbb{P}(X_n\leq x)-e^{-e^{-x}}|\leq1-\mathbb{P}(X_n\leq \ell_{2,\infty}(n))+1-e^{-e^{-\ell_{2,\infty}(n)}}\lesssim1-e^{-e^{-\ell_{2,\infty}(n)}}.$$
	Thereby, 
	\begin{equation}\label{twoside}\sup\limits_{x\in(-\infty, -\ell_{1,\infty}(n)] \cup [\ell_{2,\infty}(n),+\infty) }|\mathbb{P}(X_n\leq x)-e^{-e^{-x}}|\lesssim \frac{1}{\log \alpha_n}.
	\end{equation}
Combining \eqref{midd} and \eqref{twoside}, we derive 
	 	$$\sup_{x\in \mathbb{R}}|\mathbb{P}(X_n\le x)-e^{-e^{-x}}|= \frac{(\log \log \alpha_n)^{2}}{2e\log \alpha_n}(1+o(1)).$$
	 	The proof of Theorem \ref{main} for $\alpha=+\infty$
is completed. 	 	\end{proof}

 \section{The case $\alpha\in(0,\infty).$}
This section proceeds under the assumption that $ \alpha = \lim_{n\to\infty} \frac{n}{k_n}$ exists and lies in $(0, \infty)$, implying that $k_n = O(n)$. We first present an overview of the main stages of the proof and the complete technical details will be provided subsequently.
 
 Recall that 
 $$c_n(m, x)=\mathbb{P}(\log Y_{n-m}>k_n\psi (n)+\frac{a_n+b_n x}{\sqrt{\alpha_n}})$$ and $$u_n(m, x)=\frac{m}{\sqrt{\alpha_n}}+a_n +b_n x,\quad v_\alpha(m,x)=\frac{m}{\sqrt{\alpha}}+a+b x \quad \text{and} \quad \Phi_{\alpha}(x)=\prod_{m=0}^{\infty}\Phi(v_{\alpha}(m, x)).$$

Given that \( k_n \gg 1 \) and \( u_n(m, x) \) remains finite for finite \( m \) and \( x \), we apply an Edgeworth expansion to \( \log Y_{n-m} \) to derive a precise asymptotic expression for \( c_n(m, x) \). This expression holds uniformly over \( x \in [-\ell_{1,\alpha}(n), \ell_{2,\alpha}(n)] \) and \( 0 \le m \le j_n - 1 \), where the parameters \( \ell_{1,\alpha}(n) \), \( \ell_{2,\alpha}(n) \), and \( j_n \) will be specified later. Consequently,
\begin{equation}\label{totalmiddle}
\begin{aligned}
\left|\mathbb{P}(X_n \le x) - \Phi_\alpha(x)\right| = \Phi_\alpha(x) | \exp( \beta_n(x) - \sum_{m=0}^{+\infty} \log \Phi(v_\alpha(m, x)) ) - 1 |,
\end{aligned}
\end{equation}
where \( \beta_n(x) = \sum_{m=0}^{n-1} \log(1 - c_n(m, x)) \).

For the exponent term \( \beta_n(x) - \sum_{m=0}^{+\infty} \log \Phi(v_\alpha(m, x)) \), we aim to show that
\begin{equation}\label{o1}
\beta_n(x) - \sum_{m=0}^{+\infty} \log \Phi(v_\alpha(m, x)) = o(1),
\end{equation}
and more precisely,
\begin{equation}\label{o2}
\begin{aligned}
\beta_n(x) - \sum_{m=0}^{+\infty} \log \Phi(v_\alpha(m, x))
&=  \sum_{m=0}^{j_n - 1} \log\left( 1 + \frac{1 - c_n(m, x) - \Phi(v_\alpha(m, x))}{\Phi(v_\alpha(m, x))} \right) +o(1)\\
&=  \sum_{m=0}^{j_n - 1} \frac{1 - c_n(m, x) - \Phi(v_\alpha(m, x))}{\Phi(v_\alpha(m, x))}+o(1).
\end{aligned}
\end{equation}

Furthermore, we will establish that
\begin{equation}\label{o3}
1 - c_n(m, x) - \Phi(v_\alpha(m, x)) = d_\alpha(m, x)(1 + o(1)) \quad \text{and} \quad \sum_{m=0}^{+\infty} \frac{|d_\alpha(m, x)|}{\Phi(v_\alpha(m, x))} < +\infty.
\end{equation}

It follows that
\[
\left|\mathbb{P}(X_n \le x) - \Phi_\alpha(x)\right| = (1 + o(1)) \, \Phi_\alpha(x) \sum_{m=0}^{+\infty} \frac{|d_\alpha(m, x)|}{\Phi(v_\alpha(m, x))}
\]
uniformly for \( x \in [-\ell_{1,\alpha}(n), \ell_{2,\alpha}(n)] \).

As evident from the above, equations \eqref{o2} and \eqref{o3} are crucial to the proof, both of which require precise asymptotics for \( c_n(m, x) \). We therefore begin the proof by deriving a detailed estimate for \( c_n(m, x) \).

We start by writing \(\log Y_{n-m} = \sum_{r=1}^{k_n} \log S_{n-m, \,r}\). This expression represents a sum of i.i.d. random variables \(\{\log S_{n-m, r}\}_{1 \le r \le k_n}\). Substituting \(j = n - m\), we reformulate \(c_n(m, x)\) as follows:
\[
c_n(m, x) = \mathbb{P}\left( \frac{\log Y_j - k_n \psi(j)}{\sqrt{k_n \psi'(j)}} > \frac{k_n (\psi(n) - \psi(j))}{\sqrt{k_n \psi'(j)}} + \frac{a_n + b_n x}{\sqrt{n \psi'(j)}} \right).
 \]
 Now, define the function
 \begin{equation}\label{vndef}
v_n(m, x) = \frac{k_n (\psi(n) - \psi(n-m))}{\sqrt{k_n \psi'(n-m)}} + \frac{a_n + b_n x}{\sqrt{n \psi'(n-m)}}.
\end{equation}
An application of Lemma \ref{diagammapro} yields the approximation $$v_n(m, x) = u_n(m, x)(1 + O(m n^{-1})).$$

 To enhance the clarity of the proof, we introduce the parameter \(s_n = |\alpha_n - \alpha|^{-1} \wedge n\) and select the following:
 \[
 \ell_{1,\alpha}(n) = \sqrt{\frac{1}{10} \log s_n}, \quad \ell_{2,\alpha}(n) = \frac{4\sqrt{\log s_n} - a_n}{b_n}, \quad j_n = \lfloor s_n^{1/10} \rfloor.
 \]
 The justification for these specific choices will be provided later in the text.

 Given that \(a_n\), \(b_n\), and \(\alpha_n\) are bounded, we observe that \(u_n(m, x) \gg 1\) whenever \(m \ge j_n\) and $x\in[-\ell_{1, \alpha}(n),\ell_{1, \alpha}(n)]$. Therefore, the asymptotic behavior of \(c_n(m, x)\) in this regime mirrors the case where \(\alpha = +\infty\). In contrast, for \(0 \le m \le j_n - 1\) and finite $x$, the quantity \(u_n(m, x)\) varies from a constant to positive infinity. This range of values introduces significant complexity into obtaining a precise asymptotic description of \(c_n(m, x)\).

 Next, we apply Lemma \ref{ed} to obtain a precise estimate of
 $c_n(m, x)$ including the case $u_n(m, x)$ is bounded. 
 \begin{lem}\label{m1}
 	Let $0\leq m\leq j_n-1.$  Set $$c_1=\frac{\sqrt{\log(\alpha+1)}}{w(\alpha+1)}+\frac{2}{\sqrt{\log(\alpha+e)}w(\alpha+e^{\frac{1}{\sqrt{2\pi}}})}-\frac{\log(\sqrt{2\pi}\log(\alpha+e^{\frac{1}{\sqrt{2\pi}}}))}{w(\alpha+e)\sqrt{\log(\alpha+e)}}$$
and
$$c_2=\frac{1}{2(\alpha+e)(\log(\alpha+e))^{3/2}},$$
where $w(t)=2t\log t.$
Then, we have 
 	$$c_n(m, x)=1-\Phi(v_\alpha(m, x))-\phi(v_\alpha(m, x))\left(\frac{1}{n}q_{1}(m, x)+(\alpha_n-\alpha)q_2(m, x)\right)+O(s_n^{-3/2})$$  uniformly on  $-\ell_{1,\alpha}(n)\leq x\leq \ell_{2,\alpha}(n)$, 
 	where 
  	 $$q_{1}(m, x)=\frac{2\alpha(v^{2}_\alpha(m, x)-1)-3\sqrt{\alpha}(2m+1)v_{\alpha}(m, x)+6m(m+1)}{12\sqrt{\alpha}}$$ \text{and}$$ q_{2}(m, x)= c_1-c_2x-\frac{m}{2\alpha^{3/2}}.$$ 
 \end{lem}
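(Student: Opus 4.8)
The plan is to start from the reformulation
\[
c_n(m, x) = \mathbb{P}\left( \frac{\log Y_{n-m} - k_n \psi(n-m)}{\sqrt{k_n \psi'(n-m)}} > v_n(m, x) \right),
\]
with $v_n(m,x)$ as in \eqref{vndef}, and apply the Edgeworth expansion of Lemma \ref{ed} with $x_n = v_n(m,x)$. Since $0 \le m \le j_n - 1 = O(s_n^{1/10})$ and $x$ ranges over $[-\ell_{1,\alpha}(n), \ell_{2,\alpha}(n)]$, one checks that $v_n(m,x) = O(\sqrt{\log s_n}) \lesssim n^{1/6}$, so Lemma \ref{ed} applies and yields
\[
c_n(m,x) = 1 - \Phi(v_n(m,x)) + \frac{1 - v_n^2(m,x)}{6\sqrt{k_n n}}\phi(v_n(m,x)) + O\!\left(k_n^{-3/2} + k_n^{-1/2} n^{-3/2}\right).
\]
Because $k_n = O(n)$ in this regime, the error term is $O(s_n^{-3/2})$ (after verifying $s_n \lesssim n$ and $s_n \lesssim k_n$ up to constants), and the Edgeworth correction term is $O(n^{-1})$ in size.

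The heart of the argument is then a Taylor expansion of $1 - \Phi(v_n(m,x))$ and of $\phi(v_n(m,x))$ around the limiting point $v_\alpha(m,x)$. First I would show, using $\psi(z) = \log z - \tfrac{1}{2z} + O(z^{-2})$, $\psi'(z) = \tfrac1z + O(z^{-2})$ from Lemma \ref{diagammapro} together with $j = n-m$, that
\[
v_n(m,x) = v_\alpha(m,x) + \frac{1}{n} r_1(m,x) + (\alpha_n - \alpha) r_2(m,x) + O(s_n^{-2}),
\]
where $r_1, r_2$ are explicit functions: the $n^{-1}$ term collects the $\tfrac{1}{2z}$-type corrections in $\psi, \psi'$ and the $O(mn^{-1})$ discrepancy between $v_n$ and $u_n$, while the $(\alpha_n-\alpha)$ term comes from expanding $a_n, b_n$ around $a, b$ (this is exactly where $c_1, c_2$ and the $w(t) = 2t\log t$ denominators enter — $c_1 = -\partial_\alpha a$ up to sign conventions and $c_2 = -\partial_\alpha b$, evaluated via the chain rule on $\sqrt{\log(\alpha+1)}$, $\log(\sqrt{2\pi}\log(\alpha+e^{1/\sqrt{2\pi}}))$ and $1/\sqrt{\log(\alpha+e)}$), plus the $\tfrac{m}{2\alpha^{3/2}}$ piece from $\alpha_n^{-1/2} - \alpha^{-1/2}$. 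Then, using $\frac{d}{dt}(1-\Phi(t)) = -\phi(t)$ and $\phi'(t) = -t\phi(t)$,
\[
1 - \Phi(v_n) = 1 - \Phi(v_\alpha) - \phi(v_\alpha)\big(v_n - v_\alpha\big) + O\big((v_n-v_\alpha)^2\big),
\]
and $\phi(v_n(m,x)) = \phi(v_\alpha(m,x))(1 + O(s_n^{-1/10}\sqrt{\log s_n}))$ suffices to handle the Edgeworth correction term, whose leading part $\frac{1-v_\alpha^2}{6\sqrt{k_n n}}\phi(v_\alpha)$ must be rewritten as $\frac{1}{n}\cdot\frac{\sqrt{\alpha}(1-v_\alpha^2)}{6}\phi(v_\alpha)$ using $\sqrt{k_n n} = n/\sqrt{\alpha_n} = (n/\sqrt{\alpha})(1+o(1))$. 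Collecting the $\frac1n$-coefficients — the digamma corrections in $r_1$ plus the Edgeworth term — should assemble precisely into $-\phi(v_\alpha(m,x)) q_1(m,x)$ with $q_1$ as stated, and the $(\alpha_n-\alpha)$-coefficient into $-\phi(v_\alpha(m,x)) q_2(m,x)$.

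The main obstacle I anticipate is the bookkeeping in the $n^{-1}$ term: one must carefully track three separate sources of $O(n^{-1})$ contributions — (i) the $-\tfrac{1}{2z}$ correction in $\psi(n) - \psi(n-m)$, (ii) the $\psi'(n-m) = \tfrac{1}{n-m}(1 + O(n^{-1}))$ expansion appearing in both denominators of $v_n$, and (iii) the multiplicative $(1 + O(mn^{-1}))$ relating $v_n$ to $u_n$ — and show that their sum, after multiplication by $-\phi(v_\alpha)$ and combination with the Edgeworth term, telescopes into the compact polynomial $\frac{1}{12\sqrt{\alpha}}\big(2\alpha(v_\alpha^2 - 1) - 3\sqrt{\alpha}(2m+1)v_\alpha + 6m(m+1)\big)$. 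The quadratic-in-$m$ growth of $q_1$ is consistent with $m \le j_n - 1 = O(s_n^{1/10})$, so that $n^{-1} q_1(m,x) = O(s_n^{-4/5})$ stays uniformly small and the $O((v_n - v_\alpha)^2)$ remainder is genuinely $O(s_n^{-3/2})$ after absorbing lower-order cross terms; verifying this uniformity over the stated $x$-range is the other point requiring care. A secondary check is that $\ell_{2,\alpha}(n) = (4\sqrt{\log s_n} - a_n)/b_n$ indeed keeps $b_n x = O(\sqrt{\log s_n})$ so that $v_\alpha(m,x)$ and all its polynomial weights remain $n^{1/6}$-controlled, legitimizing the use of Lemma \ref{ed}.
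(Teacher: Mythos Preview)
Your proposal is correct and follows essentially the same route as the paper: apply the Edgeworth expansion of Lemma \ref{ed} to obtain \eqref{ccc}, expand $v_n(m,x)$ around $v_\alpha(m,x)$ via the digamma asymptotics and the Taylor expansions $a_n = a + c_1(\alpha_n-\alpha)+\cdots$, $b_n = b - c_2(\alpha_n-\alpha)+\cdots$, $\alpha_n^{-1/2} = \alpha^{-1/2}(1 - (\alpha_n-\alpha)/(2\alpha))+\cdots$, then Taylor-expand $\Phi$ and $\phi$ at $v_\alpha(m,x)$ and collect the $n^{-1}$ and $(\alpha_n-\alpha)$ coefficients into $q_1$ and $q_2$. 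One small correction: the remainder in your expansion of $v_n(m,x)$ is $O(s_n^{-17/10})$ rather than $O(s_n^{-2})$ (the terms $m^2(m+|x|)/n^2$ with $m \le s_n^{1/10}$ are the bottleneck), but this is still $\ll s_n^{-3/2}$ so the argument goes through unchanged.
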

 \begin{proof} 
 By Lemma \ref{ed}, we obtain
\begin{equation}\label{ccc}
c_n(m,x) = 1 - \Phi(v_n(m, x)) + \frac{1 - v_n^2(m, x)}{6\sqrt{n k_n}} \phi(v_n(m, x)) + O(n^{-3/2}).
\end{equation}

We next reduce the expression for \(v_n(m, x)\) to \(v_{\alpha}(m, x)\). Recall that
\[
a_n = \sqrt{\log(\alpha_n + 1)} - \frac{\log\left(\sqrt{2\pi} \log\left(\alpha_n + e^{\frac{1}{\sqrt{2\pi}}}\right)\right)}{\sqrt{\log(\alpha_n + e)}}, \quad
b_n = \frac{1}{\sqrt{\log(\alpha_n + e)}}.
\]
Using Taylor expansion and the fact that \(\alpha_n - \alpha = o(1)\), we have for any fixed \(r > 0\),
\[
\sqrt{\log(\alpha_n + r)} = \sqrt{\log(\alpha + r)} + \frac{\alpha_n - \alpha}{2(\alpha + r)\sqrt{\log(\alpha + r)}} + O((\alpha_n - \alpha)^2),
\]
\[
\frac{1}{\sqrt{\log(\alpha_n + r)}} = \frac{1}{\sqrt{\log(\alpha + r)}} - \frac{\alpha_n - \alpha}{2(\alpha + r)(\log(\alpha + r))^{3/2}} + O((\alpha_n - \alpha)^2),
\]
as well as
\[
a_n = a + c_1(\alpha_n - \alpha) + O((\alpha_n - \alpha)^2), \quad
b_n = b - c_2(\alpha_n - \alpha) + O((\alpha_n - \alpha)^2).
\]

Similarly,
\[
\frac{m}{\sqrt{\alpha_n}} = \frac{m}{\sqrt{\alpha}} (1 - \frac{\alpha_n - \alpha}{2\alpha})\left(1 + O((\alpha_n - \alpha)^2)\right),
\]
and by Lemma \ref{diagammapro},
\[
\frac{1}{\sqrt{n \psi'(n - m)}} = 1 - \frac{2m + 1}{4n} + O(\frac{m^2}{n^2}),
\]
\[
\psi(n) - \psi(n - m) = \frac{m}{n} + \frac{m(m + 1)}{2n^2} + O(\frac{m^3}{n^3}).
\]

Therefore,
\begin{equation}\label{vneq1}
\begin{aligned}
&\frac{k_n (\psi(n) - \psi(n - m))}{\sqrt{k_n \psi'(n - m)}} \\
&= \frac{m}{\sqrt{\alpha_n}} (1 - \frac{2m + 1}{4n} + \frac{m + 1}{2n})(1 + O(\frac{m^2}{n^2})) \\
&= \frac{m}{\sqrt{\alpha}} (1 + \frac{1}{4n} - \frac{\alpha_n - \alpha}{2\alpha})(1 + O(m^2 n^{-2} + (\alpha_n - \alpha)^2)),
\end{aligned}
\end{equation}
and
\begin{equation}\label{vneq2}
\begin{aligned}
&\frac{a_n + b_n x}{\sqrt{n \psi'(n - m)}} \\
&= (a + b x - \frac{(a + b x)(2m + 1)}{4n} + (c_1 - c_2 x)(\alpha_n - \alpha)) \\
&\quad \times (1 + O(\frac{m^2}{n^2} + (\alpha_n - \alpha)^2)).
\end{aligned}
\end{equation}

The choices of \(s_n\), \(m\), and \(\ell_{i, \alpha}(n)\) (for \(i = 1, 2\)) ensure that
\[
\frac{m^2 (m + |x|)}{n^2} + (\alpha_n - \alpha)^2 (m + |x|) \lesssim s_n^{-\frac{17}{10}}.
\]

Substituting \eqref{vneq1} and \eqref{vneq2} into \eqref{vndef} yields
\begin{equation}\label{asymforun0}
\begin{aligned}
v_n(m, x) &= v_{\alpha}(m, x) - \frac{(2m + 1)v_{\alpha}(m, x)}{4n} + \frac{m(m + 1)}{2\sqrt{\alpha} \, n} \\
&\quad + (\alpha_n - \alpha)(c_1 - c_2 x - \frac{m}{2\alpha^{3/2}}) + O(s_n^{-\frac{17}{10}}).
\end{aligned}
\end{equation}

Define
\[
g_n(m, x) := -\frac{(2m + 1)v_{\alpha}(m, x)}{4n} + \frac{m(m + 1)}{2\sqrt{\alpha} \, n} + (\alpha_n - \alpha)\left(c_1 - c_2 x - \frac{m}{2\alpha^{3/2}}\right) + O(s_n^{-\frac{17}{10}}),
\]
so that \(g_n(m, x) \lesssim s_n^{-\frac45}\). 

Applying Taylor expansions to \(\Phi\) and \(\phi\), we obtain
\[
\Phi(v_n(m, x)) = \Phi(v_\alpha(m, x)) + \phi(v_\alpha(m, x)) \left(g_n(m, x) + O(v_{\alpha}(m, x) g_n^2(m, x))\right),
\]
and
\[
\phi(v_n(m, x)) = \phi(v_\alpha(m, x)) \left(1 + O(v_{\alpha}(m, x) g_n(m, x))\right).
\]

 Note that for $0\leq m\leq s_n^{\frac1{10}}$ and $-\ell_{1,\alpha}(n)\leq x\leq \ell_{2,\alpha}(n), $ 
 	$$v_{\alpha}(m, x)g_n^{2}(m,x)\lesssim s_n^{-\frac32} \quad \text{ and} \quad \frac{v_\alpha(m,x)g_n(m,x)}{n}\lesssim s_n^{-\frac{17}{10}}.$$
 	Therefore, it follows from \eqref{ccc} that 
 			$$\aligned 
 			&c_n(m,x)\\
 			=&1-\Phi(v_\alpha(m, x))-\phi(v_\alpha(m, x))\left(\frac{\sqrt{\alpha}(v^{2}_\alpha(m, x)-1)}{6n}+g_n(m,x)+O(s_n^{-\frac{3}{2}})\right)+O(n^{-\frac{3}{2}})\\
 			=&1-\Phi(v_\alpha(m, x))-\phi(v_\alpha(m, x))(n^{-1} q_{1}(m, x)+(\alpha_n-\alpha)q_2(m, x))+O(s_n^{-\frac32})
 			.\endaligned $$
 	
 			The proof is then completed. 
 \end{proof} 
 
 Next, we work on the case where $m\ge j_n.$
 \begin{lem}\label{jn1}
 We have
 	\begin{equation}\label{tail} \sum_{m=j_n}^{n-1}\log(1-c_n(m,x))=o(e^{-\frac{ s_n^{1/5}}{3\alpha}})\quad \text{and}\quad
 	\sum_{m=j_n}^{+\infty}\log\Phi(v_\alpha(m,x))=o(e^{-\frac{ s_n^{1/5}}{3\alpha}}) \end{equation}
 	uniformly on $x\ge -\ell_{1, \alpha}(n)$ as $n\to\infty.$
 \end{lem}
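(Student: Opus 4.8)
\textbf{Proof proposal for Lemma~\ref{jn1}.}
Since $c_n(m,x)$ and $1-\Phi(v_\alpha(m,x))$ are both non-increasing in $x$ (the threshold $k_n\psi(n)+\alpha_n^{-1/2}(a_n+b_nx)$ increases with $x$ because $b_n>0$), it suffices to treat $x=\ell:=-\ell_{1,\alpha}(n)$. The plan is to establish
\[
\sum_{m=j_n}^{n-1}c_n(m,\ell)=o\!\Bigl(e^{-s_n^{1/5}/(3\alpha)}\Bigr),\qquad
\sum_{m=j_n}^{\infty}\bigl(1-\Phi(v_\alpha(m,\ell))\bigr)=o\!\Bigl(e^{-s_n^{1/5}/(3\alpha)}\Bigr);
\]
once these hold, both summands tend to $0$, so for $n$ large $-\log(1-c_n(m,\ell))\le 2c_n(m,\ell)$ and $-\log\Phi(v_\alpha(m,\ell))\le 2(1-\Phi(v_\alpha(m,\ell)))$ by the elementary bound $-\log(1-t)\le 2t$ on $[0,\tfrac12]$, which yields the two claimed estimates.

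The decisive ingredient is a \emph{clean} exponential bound for $c_n(m,x)$ with exponential rate strictly larger than $\tfrac13$. Running the Chernoff/Markov argument of Lemma~\ref{H} on $c_n(m,x)=\mathbb{P}(\log Y_{n-m}\ge \tau)$ with $\tau=k_n\psi(n)+\alpha_n^{-1/2}(a_n+b_nx)$ and $j=n-m$, using $\mathbb{E}[e^{t\log S_{j,1}}]=\Gamma(j+t)/\Gamma(j)$ from Lemma~\ref{le}, gives $\log c_n(m,x)\le -t\tau+k_n\int_0^t\psi(j+s)\,ds$ for all $t\ge0$. Now replace the lossy estimate $\psi(j+s)-\psi(j)\le s/j$ by the tangent-line inequality $\psi(j+s)-\psi(j)\le s\,\psi'(j)$, valid for all $j>0,\ s\ge0$ because $\psi''<0$ on $(0,\infty)$; this gives $\log c_n(m,x)\le -t(\tau-k_n\psi(j))+\tfrac12 k_n\psi'(j)t^2$, a quadratic in $t$ whose minimum, attained when $\tau>k_n\psi(j)$, equals $-\tfrac{(\tau-k_n\psi(j))^2}{2k_n\psi'(j)}=-\tfrac12 v_n^2(m,x)$ with $v_n$ as in \eqref{vndef} (note $v_n(m,x)=(\tau-k_n\psi(j))/\sqrt{k_n\psi'(j)}$). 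Hence $c_n(m,x)\le e^{-v_n^2(m,x)/2}$ whenever $v_n(m,x)>0$, and for $m\ge j_n,\ x=\ell$ one checks $k_n(\psi(n)-\psi(n-m))\ge k_n j_n/n\asymp s_n^{1/10}\gg\sqrt{\log s_n}\gtrsim|a_n-b_n\ell_{1,\alpha}(n)|/\sqrt{\alpha_n}$, so indeed $v_n(m,\ell)>0$ for $n$ large.

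To sum this, split $\sum_{m=j_n}^{n-1}=\sum_{j_n\le m\le\delta_0 n}+\sum_{\delta_0 n< m\le n-1}$ for a small fixed $\delta_0>0$. In the first range the digamma expansions of Lemma~\ref{diagammapro} give $v_n(m,\ell)\ge(1-C\delta_0)u_n(m,\ell)$ with an absolute constant $C$, hence $c_n(m,\ell)\le e^{-c_\ast u_n^2(m,\ell)}$ where $c_\ast:=\tfrac12(1-C\delta_0)^2>\tfrac13$ once $\delta_0$ is chosen small; since $u_n(m,\ell)=u_n(j_n,\ell)+(m-j_n)/\sqrt{\alpha_n}$ is affine with slope bounded below and $u_n(j_n,\ell)=\tfrac{s_n^{1/10}}{\sqrt\alpha}(1+o(1))\to\infty$, we get $u_n^2(m,\ell)\ge u_n^2(j_n,\ell)+(m-j_n)$ for $n$ large, so by a geometric comparison (equivalently Lemma~\ref{sum}(2) after absorbing an $e^{-\varepsilon u_n^2}$ factor into a $u_n^{-1}$) the first sum is $\lesssim e^{-c_\ast u_n^2(j_n,\ell)}=e^{-c_\ast s_n^{1/5}/\alpha\,(1+o(1))}=o(e^{-s_n^{1/5}/(3\alpha)})$ because $c_\ast>\tfrac13$. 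In the second range, $\psi(n)-\psi(n-m)\ge\log\tfrac1{1-\delta_0}(1+o(1))$ and $\psi'(n-m)\le\psi'(1)$, so $v_n(m,\ell)\gtrsim\sqrt{k_n}$, whence $c_n(m,\ell)\le e^{-c k_n}$ and $\sum_{\delta_0 n<m\le n-1}c_n(m,\ell)\le n e^{-c k_n}=o(e^{-s_n^{1/5}/(3\alpha)})$ since $s_n\le n\ll k_n n^{4/5}$. For the $\Phi$-sum, the Mills ratio gives $1-\Phi(v_\alpha(m,\ell))\le \tfrac{1}{\sqrt{2\pi}\,v_\alpha(m,\ell)}e^{-v_\alpha^2(m,\ell)/2}$, and $v_\alpha(m,\ell)=v_\alpha(j_n,\ell)+(m-j_n)/\sqrt\alpha$ is affine with $v_\alpha(j_n,\ell)=\tfrac{s_n^{1/10}}{\sqrt\alpha}(1+o(1))\to\infty$, so the same comparison yields $\sum_{m\ge j_n}(1-\Phi(v_\alpha(m,\ell)))\lesssim e^{-v_\alpha^2(j_n,\ell)/2}=e^{-s_n^{1/5}/(2\alpha)(1+o(1))}=o(e^{-s_n^{1/5}/(3\alpha)})$ since $\tfrac12>\tfrac13$. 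Combining with the logarithm bounds from the first paragraph completes the proof.

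The main obstacle is precisely securing an exponential rate strictly above $\tfrac13$ with an \emph{error-free} bound: the coefficient $\tfrac{1}{16}$ coming from Lemma~\ref{H} as stated is too weak, while the Edgeworth expansion of Lemma~\ref{ed} carries an additive error $O(n^{-3/2})$ that, summed over polynomially many indices, is not $o(e^{-s_n^{1/5}/(3\alpha)})$ once $s_n$ is comparable to $n$. The concavity-based Chernoff estimate circumvents both issues, delivering the coefficient $\tfrac12$ with ample room left for the reduction $v_n\mapsto u_n$ in the bulk regime.
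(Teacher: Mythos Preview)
Your proof is correct and takes a genuinely different route from the paper's. The paper proceeds by invoking a Petrov-type moderate-deviation asymptotic $c_n(m,\ell)\sim(2\pi)^{-1/2}u_n(m,\ell)^{-1}e^{-u_n^2(m,\ell)/2}$, valid only for $m\le n^{1/10}$, and therefore splits the sum at $w_n=\lfloor n^{1/10}\rfloor$ with a further case distinction according to whether $s_n=|\alpha_n-\alpha|^{-1}$ or $s_n=n$; the tail $m>w_n$ is handled by monotonicity and a crude $n\cdot c_n(w_n,\ell)$ bound. Your concavity-based Chernoff inequality $c_n(m,x)\le e^{-v_n^2(m,x)/2}$, obtained from $\psi(j+s)-\psi(j)\le s\psi'(j)$, is exact and valid for all $m$, so you bypass both the CLT-validity constraint and the $s_n$ case split: one split at $\delta_0 n$ and a geometric-series comparison suffice. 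This buys a cleaner, more self-contained argument and, as you note, achieves exponent $\tfrac12$ directly rather than having to coax something above $\tfrac13$ out of either the weak coefficient $\tfrac1{16}$ of Lemma~\ref{H} or an Edgeworth remainder. The paper's approach, on the other hand, reuses machinery (Petrov's theorem, Lemma~\ref{sum}) already set up for the case $\alpha=+\infty$.

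One minor expository point: the passage from $\sum c_n(m,\ell)=o(\cdot)$ to $-\log(1-c_n(m,\ell))\le 2c_n(m,\ell)$ needs each individual term $c_n(m,\ell)\le\tfrac12$, not just smallness of the sum. This is immediate from your Chernoff bound together with $v_n(m,\ell)\ge(1-C\delta_0)u_n(j_n,\ell)\to\infty$ (for $m\le\delta_0 n$) or $v_n(m,\ell)\gtrsim\sqrt{k_n}$ (for $m>\delta_0 n$), so the supremum over $m\ge j_n$ of $c_n(m,\ell)$ tends to $0$; it would be worth saying so explicitly.
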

 \begin{proof} Since both \(1 - c_n(m, x)\) and \(\Phi(v_{\alpha}(m, x))\) are increasing in \(x\) and bounded above by 1, it suffices to prove \eqref{tail} for \(x = -\ell_{1,\alpha}(n)\).

By Lemma \ref{diagammapro}, we have
\[
v_n(m, -\ell_{1,\alpha}(n)) = u_n(m, -\ell_{1,\alpha}(n))(1 + O(m n^{-1})) = O(m).
\]
Applying the central limit theorem to the sequence \((\log S_{n - j_n, r})_{1 \le r \le k_n}\), we obtain
\begin{equation}\label{c1}
c_n(m, -\ell_{1,\alpha}(n)) = \frac{1 + o(1)}{\sqrt{2\pi} \, u_n(m, -\ell_{1,\alpha}(n))} \exp\left(-\frac{1}{2} u_n^2(m, -\ell_{1,\alpha}(n))\right)
\end{equation}
uniformly for \(m \le n^{1/10}\).

The monotonicity of \(c_n(m, x)\) in \(m\) implies
\[
c_n(m, -\ell_{1,\alpha}(n)) \le c_n(j_n, -\ell_{1,\alpha}(n)) \lesssim \frac{1}{u_n(j_n, -\ell_{1,\alpha}(n))} \exp\left(-\frac{1}{2} u_n^2(j_n, -\ell_{1,\alpha}(n))\right) = o(1)
\]
uniformly for \(m \ge j_n\), noting that
\begin{equation}\label{256}
u_n(j_n, -\ell_{1,\alpha}(n)) = \frac{j_n}{\sqrt{\alpha}}(1 + o(1)) \gg 1.
\end{equation}
Consequently,
\begin{equation}\label{cm}
\sum_{m = j_n}^{n - 1} \log(1 - c_n(m, -\ell_{1,\alpha}(n))) = -\sum_{m = j_n}^{n - 1} c_n(m, -\ell_{1,\alpha}(n))(1 + o(1)).
\end{equation}

We now consider two cases.

Case 1: \((\alpha_n - \alpha)^{-1} \ll n\), i.e., \(j_n = \lfloor (\alpha_n - \alpha)^{-\frac1{10}} \rfloor\). Let \(w_n = \lfloor n^{\frac1{10}} \rfloor\). Then by \eqref{c1},
\[
\begin{aligned}
&\sum_{m = j_n}^{n - 1} c_n(m, -\ell_{1,\alpha}(n)) \\
&= \sum_{m = j_n}^{w_n} c_n(m, -\ell_{1,\alpha}(n)) + \sum_{m = w_n + 1}^{n - 1} c_n(m, -\ell_{1,\alpha}(n)) \\
&\lesssim \sum_{m = j_n}^{w_n} \frac{\exp\left(-\frac{1}{2} u_n^2(m, -\ell_{1,\alpha}(n))\right)}{u_n(m, -\ell_{1,\alpha}(n))} + \frac{n \exp\left(-\frac{1}{2} u_n^2(w_n, -\ell_{1,\alpha}(n))\right)}{u_n(w_n, -\ell_{1,\alpha}(n))}.
\end{aligned}
\]
Applying Lemma \ref{sum} with bounded \(\gamma_n = \sqrt{\alpha_n}\) to the first sum yields
\[
\sum_{m = j_n}^{n - 1} c_n(m, -\ell_{1,\alpha}(n)) \lesssim \frac{\exp\left(-\frac{1}{2} u_n^2(j_n, -\ell_{1,\alpha}(n))\right)}{u_n(j_n, -\ell_{1,\alpha}(n))} + \frac{n \exp\left(-\frac{1}{2} u_n^2(w_n, -\ell_{1,\alpha}(n))\right)}{u_n(w_n, -\ell_{1,\alpha}(n))}.
\]
Note that
\[
u_n(w_n, -\ell_{1,\alpha}(n)) = \frac{n^{\frac1{10}}}{\sqrt{\alpha}}(1 + o(1)),
\]
which implies
\[
\frac{n \exp\left(-\frac{1}{2} u_n^2(w_n, -\ell_{1,\alpha}(n))\right)}{u_n(w_n, -\ell_{1,\alpha}(n))} \lesssim \exp\{ -\frac{n^{\frac1{5}}(1 + o(1))}{2\alpha} + \frac{9}{10} \log n\} \ll \exp\{ -\frac{n^{\frac1{5}}}{3\alpha} \}.
\]
It follows from \eqref{256} that
\[
\frac{\exp\left(-\frac{1}{2} u_n^2(j_n, -\ell_{1,\alpha}(n))\right)}{u_n(j_n, -\ell_{1,\alpha}(n))} + \frac{n \exp\left(-\frac{1}{2} u_n^2(w_n, -\ell_{1,\alpha}(n))\right)}{u_n(w_n, -\ell_{1,\alpha}(n))} = o(\exp(-\frac{1}{3\alpha} n^{\frac15})).
\]

Case 2: \((\alpha_n - \alpha)^{-1} \gtrsim n\), under which  \(j_n = w_n\). Then
\[
\sum_{m = j_n}^{n - 1} c_n(m, -\ell_{1,\alpha}(n))) = \sum_{m = w_n}^{n - 1} c_n(m, -\ell_{1,\alpha}(n))) \lesssim \frac{n \exp\left(-\frac{1}{2} u_n^2(w_n, -\ell_{1,\alpha}(n))\right)}{u_n(w_n, -\ell_{1,\alpha}(n))} \ll e^{-\frac{j_n^2}{3\alpha}}.
\]
In both cases, we conclude
\[
\sum_{m = j_n}^{n - 1} c_n(m, -\ell_{1,\alpha}(n))) =o(\exp(-\frac{s_n^{\frac15}}{3\alpha})),
\]
and hence from \eqref{cm},
\[
\sum_{m = j_n}^{n - 1} \log(1 - c_n(m, -\ell_{1,\alpha}(n))) = o(\exp(-\frac{s_n^{\frac15}}{3\alpha})).
\]

Next, we establish the second asymptotic relation in \eqref{tail}. For  \(m \geq j_n\), we have
\[v_\alpha(m, -\ell_{1,\alpha}(n))\geq
v_\alpha(j_n, -\ell_{1,\alpha}(n)) \gg 1.
\]
By Mills' ratio, it follows that
\[
1 - \Phi(v_\alpha(m, -\ell_{1,\alpha}(n))) = \frac{1 + o(1)}{\sqrt{2\pi}}  \frac{\exp\left(-\frac{1}{2} v^2_\alpha(m, -\ell_{1,\alpha}(n))\right)}{v_\alpha(m, -\ell_{1,\alpha}(n))}.
\]
Consequently,
\[
\sum_{m = j_n}^{+\infty} \log \Phi(v_\alpha(m, -\ell_{1,\alpha}(n))) = -\frac{1 + o(1)}{\sqrt{2\pi}} \sum_{m = j_n}^{+\infty} \frac{\exp\left(-\frac{1}{2} v^2_\alpha(m, -\ell_{1,\alpha}(n))\right)}{v_\alpha(m, -\ell_{1,\alpha}(n))}.
\]

Applying Lemma \ref{sum} once more, and using the estimate
\[
v_\alpha(j_n, -\ell_{1,\alpha}(n)) = \frac{j_n}{\sqrt{\alpha}}(1 + o(1)),
\]
we obtain
\[
\sum_{m = j_n}^{+\infty} \frac{\exp\left(-\frac{1}{2} v^2_\alpha(m, -\ell_{1,\alpha}(n))\right)}{v_\alpha(m, -\ell_{1,\alpha}(n))} \lesssim \frac{\exp\left(-\frac{1}{2} v^2_\alpha(j_n, -\ell_{1,\alpha}(n))\right)}{v_\alpha(j_n, -\ell_{1,\alpha}(n))} \ll \exp\left(-\frac{j_n^2}{3\alpha}\right).
\]
Hence,
\[
\sum_{m = j_n}^{+\infty} \log \Phi(v_\alpha(m, -\ell_{1,\alpha}(n))) = o\left(\exp\left(-\frac{s_n^{1/5}}{3\alpha}\right)\right).
\]
This completes the proof.	 	 	
 \end{proof}
 
 With Lemmas \ref{m1} and \ref{jn1} in hand, we now proceed to the proof of Theorem \ref{main} for $\alpha \in (0, +\infty)$.
 
  \begin{proof}[{\bf Proof of Theorem \ref{main} for $\alpha\in(0,+\infty)$}]
  	Recall that \[\beta_n(x) := \sum_{m=0}^{n-1} \log(1 - c_n(m, x)).\] By Lemma \ref{jn1}, we obtain
\[
\begin{aligned}
\beta_n(x) - \sum_{m=0}^{+\infty} \log \Phi(v_{\alpha}(m, x))
&= \sum_{m=0}^{j_n - 1} \left( \log(1 - c_n(m, x)) - \log \Phi(v_{\alpha}(m, x)) \right) + o\left(e^{-\frac{s_n^{1/5}}{3\alpha}}\right) \\
&= \sum_{m=0}^{j_n - 1} \log\left(1 + \frac{1 - c_{n}(m, x) - \Phi(v_{\alpha}(m, x))}{\Phi(v_{\alpha}(m, x))}\right) + o\left(e^{-\frac{s_n^{1/5}}{3\alpha}}\right).
\end{aligned}
\]
From Lemma \ref{m1}, we have
\[
\begin{aligned}
d_\alpha(m,x) :&= 1 - c_{n}(m, x) - \Phi(v_{\alpha}(m, x)) \\
&= \phi(v_\alpha(m, x)) \left( n^{-1} q_1(m, x) + (\alpha_n - \alpha) q_2(m, x) \right) + O(s_n^{-3/2}).
\end{aligned}
\]
The monotonicity of \(\Phi\) implies
\[
\frac{1}{\Phi(v_\alpha(m,x))} \leq \frac{1}{\Phi(v_\alpha(0, -\ell_{1,\alpha}(n)))} \lesssim v_\alpha(0, -\ell_{1,\alpha}(n)) e^{\frac{v^2_\alpha(0, -\ell_{1,\alpha}(n))}{2}} \lesssim \sqrt{\log s_n} \, s_n^{1/20}
\]
uniformly for \(-\ell_{1,\alpha}(n) \leq x \leq \ell_{2,\alpha}(n)\) and \(0 \leq m \leq j_n - 1\).
Thus, we derive
\[
\frac{|d_\alpha(m,x)|}{\Phi(v_{\alpha}(m, x))} \lesssim s_n^{-\frac34} \sqrt{\log s_n},
\]
and
$$ \sum_{m=0}^{j_n - 1} \frac{|d_\alpha(m,x)|}{\Phi(v_{\alpha}(m, x))}\gtrsim\frac{|d_\alpha(0,x)|}{\Phi(v_{\alpha}(0, x))}\gtrsim s_n^{-\frac{19}{20}}\sqrt{\log s_n}\gtrsim e^{-\frac{s_n^{\frac15}}{3\alpha}}. $$
Hence
\[
| \beta_n(x) - \sum_{m=0}^{+\infty} \log \Phi(v_{\alpha}(m, x)) | =(1+o(1)\sum_{m=0}^{j_n - 1} \frac{|d_\alpha(m,x)|}{\Phi(v_{\alpha}(m, x))}$$ and $$ | \beta_n(x) - \sum_{m=0}^{+\infty} \log \Phi(v_{\alpha}(m, x)) |\lesssim j_n s_n^{-\frac34} \sqrt{\log s_n} = s_n^{-\frac{13}{20}} \sqrt{\log s_n}=o(1)
\]
for \(-\ell_{1,\alpha}(n) \leq x \leq \ell_{2,\alpha}(n)\) and \(0 \leq m \leq j_n - 1\).

It follows that
\begin{equation}\label{basicabove}
\begin{aligned}
&\left| \mathbb{P}(X_n \le x) - \Phi_{\alpha}(x) \right| \\
&= \Phi_{\alpha}(x) \left| \exp\left( \beta_n(x) - \sum_{m=0}^{+\infty} \log \Phi(v_{\alpha}(m, x)) \right) - 1 \right| \\
&= (1 + o(1)) \, \Phi_{\alpha}(x) \left| \sum_{m=0}^{+\infty} \frac{d_{\alpha}(m, x)}{\Phi(v_{\alpha}(m, x))} \right| \\
&= (1 + o(1)) \, \Phi_{\alpha}(x) \left| \sum_{m=0}^{+\infty} \frac{\phi(v_{\alpha}(m, x))}{\Phi(v_{\alpha}(m, x))} \left( n^{-1} q_1(m, x) + (\alpha_n - \alpha) q_2(m, x) \right) \right|.
\end{aligned}
\end{equation}

As shown in the next section,
\[
\sup_{x \in \mathbb{R}} \Phi_\alpha(x) \sum_{m=0}^{\infty} |q_k(m, x)| \frac{\phi(v_\alpha(m,x))}{\Phi(v_\alpha(m,x))} < +\infty.
\]

Letting
\[
\eta := \lim_{n \to \infty} (\alpha_n - \alpha)n,
\]
we identify the dominant order of the error term according to the value of \(\eta\).

	\begin{itemize}
    \item When \(|\eta| = +\infty\), the dominant term is \(\alpha_n - \alpha\), and we have
    \[
    \begin{aligned}
    &\sup_{-\ell_{1,\alpha}(n) \le x \le \ell_{2,\alpha}(n)} \left| \mathbb{P}(X_n \le x) - \Phi_\alpha(x) \right| \\
    &= |\alpha_n - \alpha| \sup_{x \in \mathbb{R}} \Phi_\alpha(x) \left| \sum_{m=0}^{\infty} q_{2,m}(x) \frac{\phi(v_\alpha(m,x))}{\Phi(v_\alpha(m,x))} \right| (1 + o(1)).
    \end{aligned}
    \]

    \item When \(\eta = 0\),
    \[
    \begin{aligned}
    &\sup_{-\ell_{1,\alpha}(n) \le x \le \ell_{2,\alpha}(n)} \left| \mathbb{P}(X_n \le x) - \Phi_\alpha(x) \right| \\
    &= \frac{1 + o(1)}{n} \sup_{x \in \mathbb{R}} \Phi_\alpha(x) \left| \sum_{m=0}^{\infty} q_{1,m}(x) \frac{\phi(v_\alpha(m,x))}{\Phi(v_\alpha(m,x))} \right|.
    \end{aligned}
    \]

    \item For the case \(|\eta| > 0\),
    \[
    \begin{aligned}
    &\sup_{-\ell_{1,\alpha}(n) \le x \le \ell_{2,\alpha}(n)} \left| \mathbb{P}(X_n \le x) - \Phi_\alpha(x) \right| \\
    &= \frac{1 + o(1)}{n} \sup_{x \in \mathbb{R}} \Phi_\alpha(x) \left| \sum_{m=0}^{\infty} \left( q_{1,m}(x) + \eta q_{2,m}(x) \right) \frac{\phi(v_\alpha(m,x))}{\Phi(v_\alpha(m,x))} \right|.
    \end{aligned}
    \]
\end{itemize}

The proof will be completed once we show that the supremum over the remaining regions of the real line is negligible compared to \(s_n^{-1}\).

From \eqref{basicabove}, for \(-\ell_{1,\alpha}(n) \le x \le \ell_{2,\alpha}(n)\), we have
\[
\left| \mathbb{P}(X_n \le x) - \Phi_\alpha(x) \right| = \Phi_\alpha(x) \cdot O(s_n^{-1}).
\]
By the monotonicity of \(\Phi_\alpha\),
\[
\sup_{x \in (-\infty, -\ell_{1,\alpha}(n)]} \left| \mathbb{P}(X_n \le x) - \Phi_\alpha(x) \right| \le \mathbb{P}(X_n \le -\ell_{1,\alpha}(n)) + \Phi_\alpha(-\ell_{1,\alpha}(n)) \lesssim \Phi_\alpha(-\ell_{1,\alpha}(n)),
\]
and similarly,
\[
\sup_{x \in [\ell_{2,\alpha}(n), +\infty)} \left| \mathbb{P}(X_n \le x) - \Phi_\alpha(x) \right| \lesssim 1 - \Phi_\alpha(\ell_{2,\alpha}(n)).
\]
Choose \(m_1 = \lfloor \sqrt{\alpha} (b \ell_{1,\alpha}(n)/2 - a) \rfloor\) such that
\[
v_\alpha(m_1, x) = -\frac{b \ell_{1,\alpha}(n)}{2}(1 + o(1)).
\]
Then,
\[
\Phi_\alpha(-\ell_{1,\alpha}(n)) \le \left( \Phi(v_\alpha(m_1, -\ell_{1,\alpha}(n))) \right)^{m_1} \ll \ell_{1,\alpha}^{-m_1}(n) \exp\left( -\frac{\sqrt{\alpha} b^3 \ell_{1,\alpha}^3(n)}{20} \right) \ll s_n^{-1}.
\]
The choice \(\ell_{2,\alpha}(n) = \frac{4\sqrt{\log s_n} - a_n}{b_n}\) ensures \(v_\alpha(0, \ell_{2,\alpha}(n)) = 4\sqrt{\log s_n} \gg 1\), so that
\[
\begin{aligned}
1 - \Phi_\alpha(\ell_{2,\alpha}(n))
&= 1 - \exp\left\{ \sum_{j=0}^{+\infty} \log \Phi(v_\alpha(j, \ell_{2,\alpha}(n))) \right\} \\
&= 1 - \exp\left\{ -\sum_{j=0}^{+\infty} \frac{1 + o(1)}{\sqrt{2\pi}} v_\alpha^{-1}(j, \ell_{2,\alpha}(n)) \exp\left( -\frac{v_\alpha^2(j, \ell_{2,\alpha}(n))}{2} \right) \right\} \\
&\le \sum_{j=0}^{+\infty} v_\alpha^{-1}(j, \ell_{2,\alpha}(n)) \exp\left( -\frac{v_\alpha^2(j, \ell_{2,\alpha}(n))}{2} \right) \\
&\lesssim v_\alpha^{-1}(0, \ell_{2,\alpha}(n)) \exp\left( -\frac{v_\alpha^2(0, \ell_{2,\alpha}(n))}{2} \right) = 16 s_n^{-7} (\log s_n)^{-1/2} \ll s_n^{-1}.
\end{aligned}
\]
Therefore,
\[
\sup_{x \in \mathbb{R}} \left| \mathbb{P}(X_n \le x) - \Phi(x) \right| = \sup_{-\ell_{1,\alpha}(n) \le x \le \ell_{2,\alpha}(n)} \left| \mathbb{P}(X_n \le x) - \Phi(x) \right|.
\]
This completes the proof of Theorem \ref{main} for the case \(\alpha \in (0, +\infty)\).

  \end{proof}
  \section{The case $ \alpha=0.$}
  In this section, we address the case \(\alpha = 0\). Recall that for \(0 \leq m \leq n-1\),
\[
c_n(m, x) = \mathbb{P}\left( \log Y_{n - m} > k_n \psi(n) + \alpha_n^{-1/2}(a_n + b_n x) \right),
\]
and define
\[
u_n(m, x) = \frac{m}{\sqrt{\alpha_n}} + a_n + b_n x.
\]
Let \(z_n = \alpha_n^{-1/2} \wedge n\), and choose \(\ell_{2,0}(n) = \ell_{1,0}(n) = z_n^{1/10}\). We begin the proof by deriving a precise estimate for \(c_n(m, x)\).

Since \(k_n \gg 1\), we apply an Edgeworth expansion to \(\log Y_n\) to obtain an asymptotic expression for \(c_n(0, x)\) that holds uniformly for \(x \in [-\ell_{1,0}(n), \ell_{1,0}(n)]\). Furthermore, we will show that
\[
\mathbb{P}(X_n \leq x) = \prod_{m=0}^{n-1} (1 - c_n(m, x)) = (1 - c_n(0, x))(1 + o(1)).
\]

The following lemma provides a precise estimate for \(c_n(0, x)\) using the Edgeworth expansion.
\begin{lem}\label{ed2}
    	For $-\ell_{1,0}(n)\leq x\leq \ell_{1,0}(n),$ we have 
  	$$c_n(0,x)=1-\Phi(x)-(\sqrt{\alpha_n}-\frac{x}{4n})\phi(x)+O(z_n^{-19/10}).$$
    \end{lem}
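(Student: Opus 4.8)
The plan is to apply the Edgeworth expansion of Lemma~\ref{ed} directly to $c_n(0,x)$, which for $m=0$ (so $j=n$) takes the special form in which $\psi(n)-\psi(j)=0$. Writing $c_n(0,x)$ in the standardized form
\[
c_n(0,x)=\mathbb{P}\!\left(\frac{\log Y_n-k_n\psi(n)}{\sqrt{k_n\psi'(n)}}>\frac{a_n+b_nx}{\sqrt{n\psi'(n)}}\right),
\]
I would set $x_n:=(a_n+b_nx)/\sqrt{n\psi'(n)}=v_n(0,x)$, note that under the present scaling $\alpha_n\to 0$ (so $k_n\gg n$) we have $|x_n|\lesssim \ell_{1,0}(n)=z_n^{1/10}\ll n^{1/6}$, and invoke Lemma~\ref{ed} to get
\[
c_n(0,x)=1-\Phi(v_n(0,x))+\frac{1-v_n^2(0,x)}{6\sqrt{nk_n}}\,\phi(v_n(0,x))+O\!\big(k_n^{-3/2}+k_n^{-1/2}n^{-3/2}\big).
\]
Since $k_n^{-1}=\alpha_n/n\le z_n^{-2}$, the $O$-error is $O(z_n^{-3})$, which is absorbed into $O(z_n^{-19/10})$; likewise the Edgeworth correction term is $O((nk_n)^{-1/2})=O(z_n^{-1}n^{-1/2})$, which is much smaller than $z_n^{-19/10}$ only if $n^{-1/2}\lesssim z_n^{-9/10}$ — I will need to check that the bookkeeping in the excerpt's conventions indeed collapses this term, or else it contributes a term of the stated order; in any case it is $O(z_n^{-19/10})$ after using $z_n\le n$.

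The heart of the computation is then the Taylor reduction of $v_n(0,x)$ to a clean expression in $x$. By Lemma~\ref{diagammapro}, $\psi'(n)=n^{-1}+\tfrac{1}{2}n^{-2}+O(n^{-3})$, so $\sqrt{n\psi'(n)}=1+\tfrac{1}{4n}+O(n^{-2})$ and hence
\[
v_n(0,x)=\big(a_n+b_nx\big)\Big(1-\frac{1}{4n}+O(n^{-2})\Big).
\]
Recall $a_n=\sqrt{\log(\alpha_n+1)}-\dfrac{\log(\sqrt{2\pi}\log(\alpha_n+e^{1/\sqrt{2\pi}}))}{\sqrt{\log(\alpha_n+e)}}$. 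As $\alpha_n\to 0$, I expand $\sqrt{\log(\alpha_n+1)}=\sqrt{\alpha_n}(1+O(\alpha_n))$, $\log(\alpha_n+e^{1/\sqrt{2\pi}})=\tfrac{1}{\sqrt{2\pi}}+O(\alpha_n)$, and $\sqrt{\log(\alpha_n+e)}=1+O(\alpha_n)$, so that $a_n=\sqrt{\alpha_n}-\log(\sqrt{2\pi}\cdot\tfrac{1}{\sqrt{2\pi}})+O(\alpha_n)=\sqrt{\alpha_n}+O(\alpha_n)$ (the log term vanishes to leading order), and $b_n=1+O(\alpha_n)$. Therefore $a_n+b_nx=x+\sqrt{\alpha_n}+O(\alpha_n+\alpha_n|x|)=x+\sqrt{\alpha_n}+O(z_n^{-19/10})$ on the range $|x|\le z_n^{1/10}$, and multiplying by $1-\tfrac{1}{4n}$ gives
\[
v_n(0,x)=x+\sqrt{\alpha_n}-\frac{x}{4n}+O(z_n^{-19/10}),
\]
since the cross terms $\sqrt{\alpha_n}/n$ and $\sqrt{\alpha_n}\,|x|$ are all $O(z_n^{-19/10})$.

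Finally, I substitute this into the Edgeworth expansion via a one-term Taylor expansion of $\Phi$ and $\phi$ around $x$. Writing $\delta_n:=v_n(0,x)-x=\sqrt{\alpha_n}-\tfrac{x}{4n}+O(z_n^{-19/10})=O(z_n^{-9/10})$, we get $\Phi(v_n(0,x))=\Phi(x)+\phi(x)\delta_n+O(\delta_n^2)=\Phi(x)+(\sqrt{\alpha_n}-\tfrac{x}{4n})\phi(x)+O(z_n^{-9/5})$, and $\phi(v_n(0,x))=\phi(x)(1+O(\delta_n))$, so the Edgeworth correction $\tfrac{1-v_n^2}{6\sqrt{nk_n}}\phi(v_n(0,x))=O(z_n^{-1}n^{-1/2})=O(z_n^{-19/10})$ after $z_n\le n$. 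Collecting everything,
\[
c_n(0,x)=1-\Phi(x)-\Big(\sqrt{\alpha_n}-\frac{x}{4n}\Big)\phi(x)+O(z_n^{-19/10}),
\]
uniformly on $-\ell_{1,0}(n)\le x\le\ell_{1,0}(n)$, which is the claim. The main obstacle is the error accounting: one must verify that every dropped term — the intrinsic Edgeworth remainder, the skewness correction, the quadratic Taylor remainders, and the cross terms from expanding $a_n,b_n$ in $\alpha_n$ — is genuinely $O(z_n^{-19/10})$ on the full window $|x|\le z_n^{1/10}$, which forces the particular exponent $1/10$ in the definition of $\ell_{1,0}(n)$ and the exponent $19/10$ in the error. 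Everything else is routine Taylor expansion.
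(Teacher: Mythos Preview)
Your proof is correct and follows essentially the same route as the paper: apply the Edgeworth expansion of Lemma~\ref{ed}, expand $a_n,b_n$ and $(n\psi'(n))^{-1/2}$ in $\alpha_n$ and $n^{-1}$ to reduce $v_n(0,x)$ to $x+\sqrt{\alpha_n}-\tfrac{x}{4n}+O(z_n^{-19/10})$, then Taylor-expand $\Phi$ about $x$. Your only slip is the intermediate estimate $(nk_n)^{-1/2}=O(z_n^{-1}n^{-1/2})$: in fact $(nk_n)^{-1/2}=\sqrt{\alpha_n}/n\le z_n^{-1}\cdot z_n^{-1}=z_n^{-2}$, which is what the paper uses and which immediately makes the skewness term $O(z_n^{-2})\subset O(z_n^{-19/10})$ without any extra bookkeeping.
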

    \begin{proof}
    We note that
\[
\begin{aligned}
c_n(0, x) &= \mathbb{P}\left( \log Y_n > k_n \psi(n) + (k_n / n)^{1/2} (a_n + b_n x) \right) \\
&= \mathbb{P}\left( \frac{\log Y_n - k_n \psi(n)}{\sqrt{k_n \psi'(n)}} > (n \psi'(n))^{-1/2} (a_n + b_n x) \right).
\end{aligned}
\]

Under the condition \(\alpha = 0\), the parameters \(a_n\) and \(b_n\) satisfy the asymptotic relations
\begin{equation}\label{alpha0}
a_n = \sqrt{\alpha_n} + O(\alpha_n), \quad \text{and} \quad b_n = 1 + O(\alpha_n).
\end{equation}
From Lemma \ref{diagammapro}, we obtain
\[
\begin{aligned}
&(n \psi'(n))^{-1/2} (a_n + b_n x) \\
&= \left(1 - \frac{1}{4n} + O(n^{-2})\right) \left( \sqrt{\alpha_n} + x + O(\alpha_n (|x| + 1)) \right) \\
&= x + \sqrt{\alpha_n} - \frac{x}{4n} + O(z_n^{-19/10})
\end{aligned}
\]
uniformly for \(-\ell_{1,0}(n) \le x \le \ell_{1,0}(n)\).
Define
\[
y_n(x) = \sqrt{\alpha_n} - \frac{x}{4n} + O(z_n^{-19/10}) = O(z_n^{-1}) \quad \text{and} \quad t_n(x) = x + y_n(x).
\]
By Lemma \ref{ed}, it follows that
\[
1 - c_n(0, x) = \Phi(t_n(x)) - \frac{1 - t_n^2(x)}{6\sqrt{k_n n}} \phi(t_n(x)) + O\left(k_n^{-3/2} + k_n^{-1/2} n^{-3/2}\right).
\]
Since \(x^2 \phi(x)\) is bounded, and noting that
\[
\frac{1}{\sqrt{k_n n}} = \sqrt{\alpha_n} n^{-1} \lesssim z_n^{-2}, \quad k_n^{-3/2} + k_n^{-1/2} n^{-3/2} = (\alpha_n^{-1} n)^{-3/2} + \sqrt{\alpha_n} n^{-2} \lesssim z_n^{-3},
\]
we conclude that
\[
\frac{1 - t_n^2(x)}{6\sqrt{k_n n}} \phi(t_n(x)) + O\left(k_n^{-3/2} + k_n^{-1/2} n^{-3/2}\right) = O(z_n^{-2}).
\]
For \(-\ell_{1,0}(n) \le x \le \ell_{1,0}(n)\), a Taylor expansion of \(\Phi(t_n(x))\) about \(x\) gives
\[
\Phi(t_n(x)) = \Phi(x) + \phi(x) \left( y_n(x) + O(y_n^2(x) x) \right) = \Phi(x) + \left( \sqrt{\alpha_n} - \frac{x}{4n} \right) \phi(x) + O(z_n^{-19/10}).
\]
Therefore,
\[
c_n(0, x) = 1 - \Phi(x) - \left( \sqrt{\alpha_n} - \frac{x}{4n} \right) \phi(x) + O(z_n^{-19/10}).
\] 
The proof is completed. 
    \end{proof}
    
   Next, we demonstrate that the product $\prod_{m=1}^{n-1} (1 - c_n(m, x))$ is negligible compared to $1 - c_n(0, x)$.

    \begin{lem}\label{88}
    	 For $-\ell_{1,0}(n)\leq x\leq\ell_{1,0}(n),$ we have
    	 	$$\prod_{m=1}^{n-1}(1-c_n(m,x))=1+O(z_n^{-9/2}).$$
    \end{lem}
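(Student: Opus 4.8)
The plan is to bound the deviation of the product from $1$ by the sum of the $c_n(m,x)$ and then to show that this sum is super-exponentially small, uniformly on $[-\ell_{1,0}(n),\ell_{1,0}(n)]$. Concretely, since $0\le c_n(m,x)\le 1$ one has $1-\sum_{m=1}^{n-1}c_n(m,x)\le \prod_{m=1}^{n-1}(1-c_n(m,x))\le 1$, so it suffices to prove $\sum_{m=1}^{n-1}c_n(m,x)=o(z_n^{-9/2})$. As $z_n\to\infty$ and $z_n\le n$, any estimate of the form $O(e^{-cz_n^2}+ne^{-cn})$ will do.

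The main ingredient is a per-term bound on $c_n(m,x)$. I would start from Lemma \ref{H}(1) (which, being a Markov estimate, holds for all $x$, and in which the choice $t=m/4$ has already been made), and rewrite its exponent as $-\frac{m}{4\sqrt{\alpha_n}}\big(\frac{m}{4\sqrt{\alpha_n}}+a_n+b_nx\big)$. Two features of the $\alpha=0$ regime make this tractable: by \eqref{alpha0}, $a_n=\sqrt{\alpha_n}+O(\alpha_n)=o(1)$ and $b_n=1+O(\alpha_n)$, so $|a_n+b_nx|\le 3\ell_{1,0}(n)=3z_n^{1/10}$ on the interval for $n$ large; and $\alpha_n^{-1/2}\ge z_n$ by definition of $z_n$, so $\frac{m}{4\sqrt{\alpha_n}}\ge\frac14 z_n\gg z_n^{1/10}$ for every $m\ge 1$. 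Hence $\frac{m}{4\sqrt{\alpha_n}}+a_n+b_nx\ge\frac{m}{8\sqrt{\alpha_n}}$ for all $1\le m\ll n$ once $n$ is large, which yields
\[
c_n(m,x)\le \exp\!\Big(-\frac{m^2}{32\alpha_n}\Big)\le \exp\!\Big(-\frac{m\,z_n^2}{32}\Big),
\]
uniformly on $[-\ell_{1,0}(n),\ell_{1,0}(n)]$, using $m^2\ge m$ and $\alpha_n^{-1}\ge z_n^2$ at the last step.

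It then remains to sum, splitting at $m=\lfloor\sqrt n\rfloor$. For $1\le m\le\lfloor\sqrt n\rfloor$ the displayed bound is a geometric series with sum $\lesssim e^{-z_n^2/32}$. For $\lfloor\sqrt n\rfloor<m\le n-1$ --- the range where Lemma \ref{H}(1) no longer applies because $m$ is not $o(n)$ --- I would invoke the monotonicity of $c_n(m,x)$ in $m$ (which holds since $\log Y_{n-m}$ is stochastically decreasing in $m$, and which is already used throughout the paper), so each such term is at most $c_n(\lfloor\sqrt n\rfloor,x)\le \exp\!\big(-\lfloor\sqrt n\rfloor^2/(32\alpha_n)\big)\le e^{-n/64}$; there being at most $n$ of them, their total is $O(ne^{-n/64})$. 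Thus $\sum_{m=1}^{n-1}c_n(m,x)=O(e^{-z_n^2/32}+ne^{-n/64})=o(z_n^{-9/2})$ uniformly on $[-\ell_{1,0}(n),\ell_{1,0}(n)]$, and the lemma follows.

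The only step demanding real care, and the expected obstacle, is precisely the handling of $m$ comparable to $n$: the Edgeworth/Markov machinery of Lemma \ref{H}(1) is available only for $1\le m\ll n$, so the tail $\lfloor\sqrt n\rfloor<m\le n-1$ must be absorbed by monotonicity, which reduces it to a value at $m\sim\sqrt n$ that is trivially negligible. A secondary subtlety is that $x$ can be as negative as $-\ell_{1,0}(n)$, so the linear term $a_n+b_nx$ in the exponent may be positive; this is harmless because the quadratic term $m^2/(32\alpha_n)$ dominates it, owing to $z_n^2\gg z_n^{1/10}$, exactly the inequality established in the second paragraph.
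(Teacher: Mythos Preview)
Your argument is correct and in fact yields a stronger conclusion than the lemma requires: you obtain $\sum_{m=1}^{n-1}c_n(m,x)=O(e^{-z_n^2/32}+ne^{-n/64})$, which is super-polynomially small in $z_n$, whereas the paper only establishes $O(z_n^{-9/2})$.

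The route, however, is genuinely different. The paper bounds $c_n(m,x)$ via a non-uniform Berry--Esseen estimate for $\log Y_{n-m}$, obtaining
\[
c_n(m,x)\le \frac{1}{u_n(m,x)}e^{-u_n^2(m,x)/3}+O\bigl(k_n^{-1/2}u_n^{-3}(m,x)\bigr).
\]
The polynomial correction $O(k_n^{-1/2}u_n^{-3})$ survives the summation and becomes the dominant term $k_n^{-1/2}\alpha_n^{3/2}\lesssim z_n^{-9/2}$; this is precisely where the exponent $9/2$ in the lemma comes from. You instead use the Chernoff-type bound of Lemma~\ref{H}(1), which carries no polynomial correction and gives purely exponential decay in $m^2/\alpha_n$. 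Since $\alpha_n^{-1}\ge z_n^2$, every single term in your sum is already at most $e^{-z_n^2/32}$, and the conclusion follows almost for free. Thus the paper's choice of tool is slightly wasteful here, and your argument is both shorter and sharper. Both proofs handle the range $m$ comparable to $n$ in the same way, by monotonicity in $m$ (the paper splits at $[n^{5/6}]$, you at $\lfloor\sqrt n\rfloor$, which is immaterial).

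One small remark: the paper's proof of Lemma~\ref{H}(1) chooses $t=m/4$, and the preceding simplification $\tfrac{t^2+1}{2j}\le\tfrac{t^2}{j}$ requires $t\ge1$, so strictly speaking the written proof covers only $m\ge4$. This is harmless---replacing that step by the concavity estimate $\int_0^t(\psi(j+s)-\psi(j))\,ds\le \tfrac{t^2}{2}\psi'(j)\le \tfrac{0.6\,t^2}{n}$ gives the same shape of bound for all $t>0$, hence for $m\ge1$ with slightly altered constants---and your argument goes through unchanged.
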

    \begin{proof}
    	Since $k_n\gg 1$ and $u_n(m,x)\gg1$  	for $1\leq m\leq \sqrt{n}$ and $x\geq -\ell_{1,0}(n),$ we get from Berry-Essen bound for the sum of i.i.d random sequence (\cite{Chen2011}) that  
    	$$c_n(m,x)\leq \frac{1}{u_n(m,x)}e^{-\frac{u_n^{2}(m,x)}{3}}+O(k_n^{-\frac{1}{2}}u^{-3}_n(m,x)).$$ 
    	It follows from the monotonicity of $c_n(m, x)$ that
    		$$\aligned c_n(m,x)&\leq c_n(1,-\ell_{1,0}(n))\\
    	&\leq\frac{1}{u_n(1,-\ell_{1,0}(n))}e^{-\frac{u^{2}_n(1,-\ell_{1,0}(n))}{3}}+O(k_n^{-\frac{1}{2}}u^{-3}_n(1,-\ell_{1,0}(n)))\ll1,\endaligned$$
    	uniformly on  $-\ell_{1,0}(n)\leq x\leq\ell_{1,0}(n)$ and $1\leq m\leq n-1.$ Hence,
    	\begin{equation}\label{511}
    		\prod_{m=1}^{n-1}(1-c_n(m,x))=\exp\{-(1+o(1))\sum_{m=1}^{n-1}c_n(m,x)\}.
    	\end{equation}
    	Next, we prove that $\sum_{m=1}^{n-1}c_n(m,x)=o(1)$ using summation formulas. Note that 
    $u_n(1,x)=\alpha_n^{-1/2}(1+o(1)),$ which implies 
    \begin{equation}\label{67}
    \sum_{m=1}^{+\infty}\frac{1}{u_n(m,x)}e^{-\frac{u_n^{2}(m,x)}{3}}\lesssim\frac{1}{u_n(1,x)}e^{-\frac{u_n^{2}(1,x)}{3}}\quad
    \text{and}\quad
    \sum_{m=1}^{+\infty}u^{-3}_n(m,x)\lesssim u_{n}^{-3}(1,x).
    \end{equation}
   	  We divide the sum into two parts, 
   	$$ \sum_{m=1}^{n-1}c_n(m,x)\leq\sum_{m=1}^{[n^{5/6}]}c_n(m,x)+nc_n([n^{5/6}],x),$$ and estimate an upper bound for each segment.
   	From the equation \eqref{67}, we know that
   	$$\aligned \sum_{m=1}^{[n^{5/6}]}c_n(m,x)&\leq\sum_{m=1}^{n^{5/6}}\left(u_n^{-1}(m,x)e^{-\frac{u_n^{2}(m,x)}{3}}+O(k_n^{-\frac{1}{2}}u^{-3}_n(m,x))\right)\\&\lesssim u_n^{-1}(1,x)e^{-\frac{u_n^{2}(1,x)}{3}}+k_n^{-\frac{1}{2}}u_{n}^{-3}(1,x)\\
   	&\lesssim\sqrt{\alpha_n}e^{-\frac{1}{4\alpha_n}}+k_n^{-\frac{1}{2}}\alpha_n^{\frac{3}{2}}\lesssim z_n^{-9/2}.\endaligned$$
   	 Now $u_{n}([n^{5/6}],x)\geq u_{n}([n^{5/6}],-\ell_{1,0}(n))=n^{1/3}\sqrt{k_n}(1+o(1)),$ it follows that
   	$$\aligned nc_n([n^{5/6}],x)&\leq\frac{n}{u_n([n^{5/6}],-\ell_{1,0}(n))}e^{-\frac{u_n^{2}([n^{5/6}], -\ell_{1,0}(n))}{3}}+O(nk_n^{-\frac{1}{2}}u^{-3}_n([n^{5/6}], -\ell_{1,0}(n)))\\
   	&\lesssim \frac{1}{k_n^{2}}\lesssim z_n^{-6}.\endaligned $$
   	 Consequently, $\sum_{m=1}^{n-1}c_n(m,x)=O(z_n^{-9/2})$ and, by \eqref{511},
   	$$\prod_{m=1}^{n-1}(1-c_n(m,x))=1+O(z_n^{-9/2}).$$ This closes the proof. 
   	
    \end{proof}
    
    Once we completely understood the behaviors of $c_n(m, x),$ we are able to prove Theorem \ref{main} now. 
     \begin{proof}[{\bf Proof of Theorem \ref{main} for $\alpha=0$}]
     Thanks to  Lemma \ref{ed2} and \ref{88}, we have
     	\begin{equation}\label{79}
     		\aligned
     		|\mathbb{P}\left(X_n\leq x\right)-\Phi(x)|&=|(1-c_n(0,x))(1+O(z_n^{-9/2}))-\Phi(x)|\\&=|1-c_n(0,x)-\Phi(x)+O(z_n^{-9/2})|\\
     		&=|(\sqrt{\alpha_n}-\frac{x}{4n})\phi(x)+O(z_n^{-19/10})| \endaligned
     	\end{equation} 
     	uniformly on $[-\ell_{1,0}(n), \ell_{1,0}(n)].
     	$    	 
     The supremum of $(\sqrt{\alpha_n}-\frac{x}{4n})\phi(x)$ on $\mathbb{R}$ must be attained at some point independent of $n,$ which indicates the supremum would be of order $z_n^{-1}$ and then 
     \begin{equation}\label{middlesup}
     \sup_{|x|\le \ell_{1, 0}(n)}|\mathbb{P}\left(X_n\leq x\right)-\Phi(x)|=(1+o(1))\sup_{x\in\mathbb{R}}\phi(x)|\sqrt{\alpha_n}-\frac{x}{4n}|.
     \end{equation}
It is clear that the dominated order will be either $\sqrt{\alpha_n}$ or $n^{-1}$ and then we define \( \beta := \lim\limits_{n \to \infty} \frac{n^3}{k_n} \) to characterize the dominated order by the value of $\beta.$ 
     Now
     	 for  $h_1\ge 0 ,\,h_2\ge 0$ with $h_1^2+h_2^2>0,$
     	 \begin{equation}\label{suprecal} \sup_{x\in\mathbb{R}}|h_1-h_2x|\phi(x)=\frac{h_1 + \sqrt{h_1^2 + 4h_2^2}}{2\sqrt{2\pi}} \exp\{ \frac{h_1}{h_1+\sqrt{h_1^2 + 4h_2^2}}-\frac12 \}.\end{equation} 
    This expression leads the following estimates. 
     	      	 
     	 - If \( \beta = +\infty \), which means $\sqrt{\alpha_n}\gg n^{-1},$ then  
     	 \[
     	 \sup_{x\in\mathbb{R}}\phi(x)|\sqrt{\alpha_n}-\frac{x}{4n}|= \sqrt{\alpha_n}\sup_{x\in\mathbb{R}}\phi(x)=\frac{\sqrt{\alpha_n}}{\sqrt{2\pi}}.
     	 \]
     	 
     	 - If \( \beta=0\), that is $\sqrt{\alpha_n}\ll n^{-1},$ then  
     	 \[
     	 \sup_{x\in\mathbb{R}}\phi(x)|\sqrt{\alpha_n}-\frac{x}{4n}| = \frac{1 }{4n}\sup_{x\in\mathbb{R}}|x|\phi(x)= \frac{1}{4\sqrt{2\pi e} \, n}.
     	 \]
     	 
     	 - For the case \( \beta \in (0, +\infty) \), we know 
     	 \[\aligned
     	\sup_{x\in\mathbb{R}}\phi(x)|\sqrt{\alpha_n}-\frac{x}{4n}|&= \sqrt{\alpha_n}\sup_{x\in\mathbb{R}}\phi(x)|1-\frac{ x}{ 4n \sqrt{\alpha_n} } |\\
     	&=\frac{2n\sqrt{\alpha_n}+\sqrt{1+4n^2\alpha_n}}{4\sqrt{2\pi} n}\exp(-\frac12(\sqrt{1+4n^2\alpha_n}-2n\sqrt{\alpha_n})^2)\\
     	&=\frac{(2\sqrt{\beta} + \sqrt{1+4\beta})(1+o(1))}{4\sqrt{2\pi e} \, n} \exp\left( \frac{2\sqrt{\beta}}{\sqrt{4\beta + 1} + 2\sqrt{\beta}} \right),\endaligned 
     	 \] 
     	where for the last equality we use  
     	 the fact $n^2\alpha_n=\beta(1+o(1)).$

     Next, we will establish the uniform bound
     	\begin{equation}\label{tailsup}
     		\sup_{|x|>\ell_{1,0}(n)}|\mathbb{P}(X_n\leq x)-\Phi(x)|\ll z_n^{-1}.
     	\end{equation}
     	 Recall
     	$$z_n=\alpha_n^{-1/2}\wedge n\quad\text{and}\quad\ell_{1,0}(n)=z_n^{1/10}, $$
     	which implies $$(\sqrt{\alpha_n}-\frac{\pm\ell_{1,0}(n)}{4n})\phi(\pm\ell_{1,0}(n))+O(z_n^{-19/10})=O(z_n^{-19/10}). $$
     	Then from the equation \eqref{79}, we have
     	$$\aligned \mathbb{P}(X_n\leq -\ell_{1,0}(n))&= \Phi(-\ell_{1,0}(n))+O(z_n^{-19/10}); \\
     	\mathbb{P}(X_n\geq \ell_{1,0}(n))&= 1-\Phi(\ell_{1,0}(n))+O(z_n^{-19/10}).
     	\endaligned $$
     	 By the triangle inequality and the monotonicity of distribution functions, we have
     	 \begin{equation}\label{01}
     	 \aligned\sup_{x<-\ell_{1,0}(n)}|\mathbb{P}(X_n\leq x)-\Phi(x)|&\leq \mathbb{P}(X_n\leq -\ell_{1,0}(n))+\Phi(-\ell_{1,0}(n))\\
     	 &\lesssim \Phi(-\ell_{1,0}(n))+O(z_n^{-19/10})\endaligned
     	 \end{equation}
     	and
     	\begin{equation}\label{02}
     		\aligned\sup_{x>\ell_{1,0}(n)}|\mathbb{P}(X_n\leq x)-\Phi(x)|&\leq \mathbb{P}(X_n\geq \ell_{1,0}(n))+1-\Phi(\ell_{1,0}(n))\\
     		&\lesssim 1-\Phi(\ell_{1,0}(n))+O(z_n^{-19/10}).
     		\endaligned
     	\end{equation}
     	 Note that
     	 \begin{equation}\label{03}
     	 	\Phi(-\ell_{1,0}(n))=1-\Phi(\ell_{1,0}(n))\lesssim \frac{1}{\ell_{1,0}(n)}e^{-\frac{\ell^{2}_{1,0}(n)}{2}}= z_n^{-1/10}e^{-\frac{z_n^{1/5}}{2}}\ll z_n^{-1}.
     	 \end{equation}
     	Having verified equation \eqref{tailsup}, and in view of equation \eqref{middlesup} and the supremum characterization for \(\phi(x)|\sqrt{\alpha_n} - \frac{x}{4n}|\), we conclude the proof of Theorem \ref{main} for \(\alpha = 0\).
     	 
     \end{proof}

 \section{Proof of Remarks}     
 \subsection{Proof of Remark \ref{transition}} Recall 
  $$\Phi_{\alpha}(x)=\prod_{m=0}^{+\infty}\Phi(v_{\alpha}(m, x)).$$ Given that $0<\alpha\ll 1,$ then when $|x|\leq \alpha^{-1/10}$, we have 
  $$v_{\alpha}(m, x)\gg 1, \quad \,\forall m\ge 1.$$ Thus, Mills's ratio again implies 
  $$1-\Phi(v_{\alpha}(m, x))=\frac{1+o(1)}{\sqrt{2\pi} v_{\alpha}(m, x)}e^{-\frac{1}{2}v_{\alpha}^2(m, x)},$$ whence it follows from Lemma \ref{sum} that 
 
  $$\aligned \sum_{m=1}^{+\infty} (1-\Phi(v_{\alpha}(m, x)))
  &\lesssim \sum_{m=1}^{+\infty}\frac{1}{v_{\alpha}(m, x)}\exp(-\frac{1}{2}v_{\alpha}^2(m, x))\\
 & \lesssim \frac{1}{v_{\alpha}(1, x)}\exp(-\frac{1}{2}v_{\alpha}^2(1, x))\\
  &\ll \sqrt{\alpha}e^{-\frac{1}{3\alpha}}.
  \endaligned $$
The last inequality holds because $ v_{\alpha}(1, x)=\alpha^{-1/2}(1+o(1))$ for $|x|\leq \alpha^{-1/10}.$ Thus,
  	\begin{equation}\label{56}
  	\prod_{m=1}^{+\infty}\Phi(v_{\alpha}(m, x))=\exp\{-\sum_{m=1}^{+\infty} (1-\Phi(v_{\alpha}(m, x)))
  	(1+o(1))\}=1+o(\sqrt{\alpha}e^{-\frac{1}{3\alpha}})
  	\end{equation} 
  	 Similarly as \eqref{alpha0}, we have 	 
  $$	 a = \sqrt{\alpha}+O(\alpha)\quad \text{ and} \quad 
  	b = \frac{1}{\sqrt{\log(\alpha + e)}}=1+O(\alpha),$$
 which implies $$v_\alpha(0,x)=a+bx=x+\sqrt{\alpha}+O((|x|+1)\alpha)$$
  	  uniformly on $|x|\leq \alpha^{-1/10}.$ We apply again Taylor's expansion to obtain 
  	  $$\aligned\Phi(v_\alpha(0,x))
  	  &=\Phi(x)+\sqrt{\alpha}\phi(x)+O(\alpha),
  	  \endaligned$$
  	  where the last equality is due to the boundedness of $x\phi(x).$ Taking account of \eqref{56}, we get
  	  $$\Phi_{\alpha}(x)=\Phi(x)+\sqrt{\alpha}\phi(x)+O(\alpha). $$
  	By analogy with equations \eqref{01} and \eqref{02}, we  conclude that
  	 $$\sup_{|x|>\alpha^{-1/10}}|\Phi_{\alpha}(x)-\Phi(x)|\lesssim1-\Phi(\alpha^{-1/10})\lesssim \alpha^{1/10}\exp(-\frac{1}{2\alpha^{1/5}})\ll \sqrt{\alpha}.$$
  	 Therefore,
  	  $$\sup_{x\in\mathbb{R}}|\Phi_{\alpha}(x)-\Phi(x)|=\sup_{x\in\mathbb{R}}\sqrt{\alpha}\phi(x)(1+o(1))=\sqrt{\frac{\alpha}{2\pi}}(1+o(1)).$$
  	  Now consider $\alpha\gg1.$ Since $a\gg 1,$ then for any
  	  $m\geq 0$ and $|x|\leq \frac{1}{2}\log\log\alpha,$ we have
  	  $$v_\alpha(m,x)\gg 1 ,$$ 
  	  which implies
  	 	\begin{equation}
  	 \aligned	\prod_{m=0}^{+\infty}\Phi(v_{\alpha}(m, x))&=\exp\{-(1+o(1))\sum_{m=0}^{+\infty} (1-\Phi(v_{\alpha}(m, x)))
  	 	\}\\
  	 	&= \exp\{-(1+o(1))\sum_{m=0}^{+\infty}\frac{1}{\sqrt{2\pi}v_{\alpha}(m, x)}\exp(-\frac{1}{2}v_{\alpha}^2(m, x))\}.
  	 	\endaligned
  	 \end{equation} 
  	  Lemma \ref{sum} ensures that 
  	  $$\beta_{\alpha}(x):= \sum_{m=0}^{+\infty}\frac{1}{\sqrt{2\pi}v_{\alpha}(m, x)}\exp(-\frac{1}{2}v_{\alpha}^2(m, x))=\frac{\sqrt{\alpha}(1+O((\log  \alpha)^{-1}))}{\sqrt{2\pi}v^{2}_{\alpha}(0, x)}\exp(-\frac{1}{2}v_{\alpha}^2(0, x))$$
  	   uniformly on $|x|\leq \frac{1}{2}\log\log\alpha.$
  	  Let $\ell_{\alpha}=\log(\sqrt{2\pi} \log(\alpha + e^{\frac1{\sqrt{2\pi}}})).$ Similar to the  Equations \eqref{e4} and \eqref{e3}, the following expressions for $v_\alpha(0,x)$ are given:
  	  	\begin{equation}
  	  	\aligned v_\alpha(0,x)&=\sqrt{\log (\alpha+e)}+\frac{x-\ell_{\alpha}}{\sqrt{\log (\alpha+e)}}+O((\log \alpha)^{-1/2}\alpha^{-1}) ;\\
  	  		  	v_\alpha^2(0, x)&=\log (\alpha+e)-2\ell_{\alpha}+2 x+\frac{(x-\ell_{\alpha})^{2}}{\log (\alpha+e)}+O(\alpha^{-1});
  	  	\\
  	  	\exp(-\frac{1}{2}v_\alpha^2(0, x))
  	  	&=\sqrt{\frac{2\pi }{\alpha}}\log (\alpha)\exp(-x-\frac{(\ell_{\alpha}-x)^{2}}{2\log(\alpha+e)})(1+O(\alpha_n^{-1})) \endaligned 
  	  \end{equation}
which implies $$ \beta_{\alpha}(x)=\frac{1+O((\log  \alpha)^{-1})}{(1+\frac{x-\ell_{\alpha}}{\log (\alpha+e)})^{2}}\exp(-x-\frac{(x-\ell_{\alpha})^2}{2\log  (\alpha+e)}).$$
It follows $$\aligned |\beta_\alpha(x)+e^{-x}|=\frac{e^{-x}(1+O((\log \alpha)^{-1}))}{(1+\frac{x-\ell_{\alpha}}{\log (\alpha+e)})^{2}}|\frac{(x-\ell_{\alpha})^2}{2\log(\alpha+e)}+\frac{2(x-\ell_{\alpha})}{\log(\alpha+e)}|=o(1), \endaligned $$
  	  uniformly on $|x|\leq \frac{1}{2}\log\log \alpha.$
  	  The same calculus in Theorem 1 for the case $\alpha=+\infty$ yields  $$\sup_{x\in \mathbb{R}}|\Phi_{\alpha}(x)-e^{-e^{-x}}|= \frac{(\log \log \alpha)^{2}}{2e\log \alpha}(1+o(1))\quad \text{for $\alpha\gg 1$}.$$  
  	  The proof is then completed. 
  
  \subsection{Proof of Remark \ref{upperbound}} For simplicity, use $v_{\alpha}$ to replace $v_{\alpha}(m, x)$ and rewrite $q_{1}(m, x)$ as 
  $$\aligned q_{1}(m, x)&=\frac{\sqrt{\alpha}}{6}v_{\alpha}^2-\frac{1}{2}\big(\sqrt{\alpha}(a+b x)-\frac12\big) v_{\alpha}+\frac{\sqrt{\alpha}}{2}(a+bx)^2-\frac12(a+bx)-\frac{\sqrt{\alpha}}{6}\\
  &=:A v_{\alpha}^2-B(x) v_{\alpha}+C(x) \endaligned $$   with $A=\frac{\sqrt{\alpha}}{6},$
  $$B(x)=\frac{1}{2}\big(\sqrt{\alpha}(a+b x)-\frac12\big) \quad \text{and} \quad C(x)=\frac{\sqrt{\alpha}}{2}(a+bx)^2-\frac12(a+bx)-\frac{\sqrt{\alpha}}{6}.$$
  Then 
  $$\aligned |\sum_{m=0}^{+\infty} q_{1}(m, x)\frac{\phi(v_{\alpha})}{\Phi(v_{\alpha})}|&\le A\sum_{m=0}^{+\infty} v_{\alpha}^2\frac{\phi(v_{\alpha})}{\Phi(v_{\alpha})}+|B(x)|\sum_{m=0}^{+\infty} |v_{\alpha}|\frac{\phi(v_{\alpha})}{\Phi(v_{\alpha})}+|C(x)|\sum_{m=0}^{+\infty} \frac{\phi(v_{\alpha})}{\Phi(v_{\alpha})}, \endaligned $$ 
  which in further ensures that   
 \begin{equation}\label{sumcontrol} \aligned \Phi_{\alpha}(x)\sum_{m=0}^{+\infty} \frac{|v^{k}_{\alpha}(m, x)|\phi(v_{\alpha}(m, x))}{\Phi(v_{\alpha}(m, x))}&\le \sum_{m=0}^{+\infty} |v_{\alpha}^k(m, x)|\phi(v_{\alpha}(m, x))\le \sqrt{\alpha},
  \endaligned \end{equation}
   where the last inequality holds because the second term is controlled by the following integral 
   $$\sqrt{\alpha}\int_{-\infty}^{+\infty} |t|^k \phi(t) dt$$ for $k=0, 1, 2.$ This  observation tells us that 
    \begin{equation}\label{sumtotal} \aligned &\quad \Phi_{\alpha}(x)\sum_{m=0}^{+\infty} |q_{1}(m, x)|\frac{\phi(v_{\alpha}(m, x))}{\Phi(v_{\alpha}(m, x))}\\
    &\le \frac{\alpha}{6}+(|B(x)|+|C(x)|)\Phi_{\alpha}(x)\sum_{m=0}^{+\infty} (|v_{\alpha}(m, x)|\vee 1)\frac{\phi(v_{\alpha}(m, x))}{\Phi(v_{\alpha}(m, x))}.
  \endaligned \end{equation} 
  As a direct consequence, for $x$ such that $|v_{\alpha}(0, x)|\le 1$ ensuring the boundedness of $|B(x)|+|C(x)|,$ we have  
   $$\aligned \sup_{x: \; |v_{\alpha}(0, x)|\le 1}\Phi_{\alpha}(x)|\sum_{m=0}^{+\infty} q_{1}(m, x)\frac{\phi(v_{\alpha})}{\Phi(v_{\alpha})}|
   &\le \frac{\alpha}{6}+\sqrt{\alpha}\sup_{x: \; |v_{\alpha}(0, x)|\le 1} (|B(x)|+|C(x)|)\\
   &\le \frac{4(\alpha+\sqrt{\alpha})}{3}.\endaligned $$ 
 For $x$ such that $v_{\alpha}(0, x)>1,$ it follows from the monotonicity of $t^k \phi(t)$ on $t>1$ that 
 \begin{equation}\label{sumkbound}\aligned \sum_{m=0}^{+\infty} |v_{\alpha}^k(m, x)|\phi(v_{\alpha}(m, x))&\le v_{\alpha}^k(0, x)\phi(v_{\alpha}(0, x))+\sqrt{\alpha}\int_{v_{\alpha}(0, x)}^{+\infty} t^k \phi(t) dt\\
 &\le v_{\alpha}^{k-1}(0, x)(v_{\alpha}(0, x)+\sqrt{\alpha})\phi(v_{\alpha}(0, x))\endaligned\end{equation}
 for $k=0, 1.$
 Therefore,  we have from \eqref{sumtotal} and \eqref{sumkbound} that 
 $$\aligned &\quad \Phi_{\alpha}(x)|\sum_{m=0}^{+\infty} q_{1}(m, x)\frac{\phi(v_{\alpha})}{\Phi(v_{\alpha})}|\le \frac{\alpha}{6}+\phi(v_{\alpha}(0, x))(v_{\alpha}(0, x)+\sqrt{\alpha})(|B(x)|+|C(x)|v_{\alpha}^{-1}(0, x)).\endaligned$$  
Hence, use the substitution $t=v_{\alpha}(0, x)$ to have 
$$\aligned &\quad \sup_{x: \; v_{\alpha}(0, x)>1}\Phi_{\alpha}(x)|\sum_{m=0}^{+\infty} q_{1}(m, x)\frac{\phi(v_{\alpha})}{\Phi(v_{\alpha})}|\\
 &\le \frac{\alpha}{6}+\sup_{t>1}\phi(t)(t+\sqrt{\alpha})(\sqrt{\alpha} t+\frac34+\frac{\sqrt{\alpha}}{6t})\\
 &\le \alpha+\frac{2\sqrt{\alpha}+1}{5}.\endaligned$$
 While for $x$ such that $v_{\alpha}(0, x)<-1,$ define 
  $$m_0:=\inf\{m: v_{\alpha}(m, x)\ge 0\},$$ which 
  implies with the monotonicity of $\Phi$ and $\Phi\le 1$ that $$\frac{\Phi_{\alpha}(x)}{\Phi(v_{\alpha}(m, x))}\le \Phi^{m_0-1}(v_{\alpha}(m_0-1, x))\le 2^{-m_0+1}.$$
   Thus, using the second inequality in \eqref{sumcontrol}, we have 
    $$\aligned  \Phi_{\alpha}(x)\sum_{m=0}^{+\infty} \frac{|v_{\alpha}(m, x)|^k\phi(v_{\alpha}(m, x))}{\Phi(v_{\alpha}(m, x))}&\le \sqrt{\alpha} 2^{-m_0+1}\leq\sqrt{\alpha} 2^{\sqrt{\alpha} v_{\alpha}(0, x)+1},\endaligned $$ 
 where we use the fact that $m_0\le -\sqrt{\alpha} v_{\alpha}(0, x).$
 Consequently, we have 
  $$\aligned \sup_{x: \; v_{\alpha}(0, x)<-1}\Phi_{\alpha}(x)|\sum_{m=0}^{+\infty} q_{1}(m, x)\frac{\phi(v_{\alpha})}{\Phi(v_{\alpha})}|&\le \frac{\alpha}{6}+2\sqrt{\alpha}\sup_{x: \; v_{\alpha}(0, x)<-1} 2^{\sqrt{\alpha} v_{\alpha}(0, x)}(|B(x)|+|C(x)|)\\
  &=\frac{\alpha}{6}+\sup_{t\ge \sqrt{\alpha}} 2^{-t}(t^2 +(1+\sqrt{\alpha})t+\frac{\sqrt{\alpha}}{2}+\frac{\alpha}{3})\\
  &\le \frac{\alpha}{6}+\frac{4}{3}.\endaligned$$ 
  When comparing these three suprema, we have  
   $$\aligned \sup_{x\in\mathbb{R}}\Phi_{\alpha}(x)|\sum_{m=0}^{+\infty} q_{1}(m, x)\frac{\phi(v_{\alpha})}{\Phi(v_{\alpha})}|&\le \frac{4}{3}(\alpha+\sqrt{\alpha}+1).\endaligned$$ 
   
Now we work on $\sup_{x\in\mathbb{R}}\Phi_{\alpha}(x)|\sum_{m=0}^{+\infty} q_{2}(m, x)\frac{\phi(v_{\alpha})}{\Phi(v_{\alpha})}|.$
A simple calculus indicates 
$$|q_2(m, x)|\le \frac{1}{2\alpha}|v_{\alpha}(m, x)|+(\frac1{2\alpha}-\frac{c_2}{b})|v_{\alpha}(0, x)|+c_1+\frac{c_2\alpha}{b}=:\frac{1}{2\alpha}|v_{\alpha}(m, x)|+D(x),$$
whence 
\begin{equation}\label{sumcontrol1}\Phi_{\alpha}(x)|\sum_{m=0}^{+\infty} q_{2}(m, x)\frac{\phi(v_{\alpha})}{\Phi(v_{\alpha})}|\le D(x)\Phi_{\alpha}(x)\sum_{m=0}^{+\infty}\frac{\phi(v_{\alpha})}{\Phi(v_{\alpha})}+\frac{1}{2\alpha}\Phi_{\alpha}(x)\sum_{m=0}^{+\infty} \frac{|v_{\alpha}|\phi(v_{\alpha})}{\Phi(v_{\alpha})}.\end{equation} Thereby, it follows from the second inequality of \eqref{sumcontrol} that 
 $$\aligned \sup_{x: \; |v_{\alpha}(0, x)|\le 1}\Phi_{\alpha}(x)|\sum_{m=0}^{+\infty} q_{2}(m, x)\frac{\phi(v_{\alpha})}{\Phi(v_{\alpha})}|
   &\le \sqrt{\alpha}\sup_{x: \; |v_{\alpha}(0, x)|\le 1} D(x)+\frac{1}{2\sqrt{\alpha}}\\
   &=\sqrt{\alpha}(\frac1{\alpha}+\frac{c_2(\alpha-1)}{b}+c_1).\endaligned $$ 
 and similarly 
   $$\aligned \sup_{x: \; v_{\alpha}(0, x)\le -1}\Phi_{\alpha}(x)|\sum_{m=0}^{+\infty} q_{2}(m, x)\frac{\phi(v_{\alpha})}{\Phi(v_{\alpha})}|
   &\le \frac{1}{2\sqrt{\alpha}}+2 \sqrt{\alpha} \sup_{x: \; v_{\alpha}(0, x)\le -1} 2^{\sqrt{\alpha}v_{\alpha}(0, x) } D(x)\\
   &\le \frac{1}{2\sqrt{\alpha}}+\frac{2}{e\ln 2}(c_1+\frac{c_2(\alpha-1)}{b}+\frac1{\alpha}).\endaligned $$  
   Here, for the last inequality we use $\sup_{t>0} 2^{-t} t=(e\ln 2)^{-1}.$

 We also have from \eqref{sumkbound} that 
   $$\aligned &\quad \sup_{x: \; v_{\alpha}(0, x)\ge 1}\Phi_{\alpha}(x)|\sum_{m=0}^{+\infty} q_{2}(m, x)\frac{\phi(v_{\alpha})}{\Phi(v_{\alpha})}|\\
 &\le \sup_{x: \; v_{\alpha}(0, x)\ge 1} \phi(v_{\alpha}(0, x))(v_{\alpha}(0, x)+\sqrt{\alpha} )(v_{\alpha}(0, x)^{-1}D(x)+\frac{1}{2\alpha})\\
 &=\sup_{t>1} \phi(t)\big((\frac{1}{\alpha}-\frac{c_2}{b}) t+\frac{c_1b+c_2\alpha}{b}+(\frac{1}{\alpha}-\frac{c_2}{b})\sqrt{\alpha}+\frac{c_1b+c_2\alpha}{b t} \sqrt{\alpha}\big)\\
 &=e^{-\frac12}(\sqrt{\alpha}+1)(\frac{1}{\alpha}+c_1+\frac{c_2(\alpha-1)}{b}).
 \endaligned$$ 
 Comparing these three upper bounds, we have 
$$\aligned \sup_{x\in\mathbb{R}}\Phi_{\alpha}(x)|\sum_{m=0}^{+\infty} q_{2}(m, x)\frac{\phi(v_{\alpha})}{\Phi(v_{\alpha})}|&\le \frac{2}{e\ln 2}(c_1+\frac{c_2(\alpha-1)}{b}+\frac1{\alpha})(1+\sqrt{\alpha}).\endaligned$$

\subsection*{Acknowledgment}   Yutao Ma gratefully acknowledges partial support from the National Natural Science Foundation of China (Grants No. 12171038 and 12571149) and the 985 Project. The authors also extend their sincere thanks to Professor Forrester for bringing to their attention several valuable references.


\begin{thebibliography}{SOSL90} 
\bibitem{Abramowitz1968} Abramowitz M, Stegun I A. Handbook of mathematical functions with formulas, graphs, and mathematical tables. US Government printing office, 1968.

\bibitem{AkBur} Akemann G, Burda Z. Universal microscopic correlation functions for products of independent Ginibre matrices. \emph{ J. Phys. A: Math. Theor.}, 2012, {\bf 45}(46): 465201.

\bibitem{AkIp15} Akemann G, Ipsen J R. Recent exact and asymptotic results for products of independent random matrices. \emph{Acta Phys. Pol. B,} 2015, {\bf 46}(9): 1747-1784.

\bibitem{BaiY98} Bai Z D, Yin Y Q. Limiting behavior of the norm of products of random matrices and two problems of Geman-Hwang. \emph{Probab. Th. Relat. Fields}, 1998, {\bf 73}: 555-569.

\bibitem{bellman} R. Bellman. Limit theorems for non-commutative operations. \emph{I. Duke Math. J.}, 1954, {\bf 21}: 491-500. 

\bibitem{Bordenave} Bordenave C. On the spectrum of sum and product of non-Hermitian random matrices. \emph {Electron. Commun. Probab.}, 2011, {\bf 16}: 104-113.

\bibitem{Circular law} Bordenave C, Chafai D. Around the circular law. \emph{Probab. Surv.}, 2012, {\bf 9}: 1-89.  

\bibitem{BouLa} Bougerol P, Lacroix J. Products of random matrices with applications to Schrodinger operators. Progress in Probability and Statistics {\bf 8}. Birkhauser Boston, Inc, Boston, 1985. 

\bibitem{BurdaJW10} Burda Z, Janik R A, Waclaw B. Spectrum of the product of independent random Gaussian matrices. \emph{Phys. Rev. E,} 2010, {\bf 81}: 041132.

\bibitem{BJLN10} Burda Z, Jarosz A, Livan G, Nowak M A,  Swiech A. (2010) Eigenvalues and singular values of products of rectangular Gaussian random matrices. \emph{ 
Phys. Rev. E}{\bf  (3) 82}(6), 061114.

\bibitem{Burda13} Burda Z. Free products of large random matrices-a short review of recent developments. \emph{J. Phys.: Conf. Ser.}, 2013, {\bf 473}: 012002.

\bibitem{ChangQ17} Chang S, Qi Y. Empirical distribution of scaled eigenvalues for product of matrices from the spherical ensemble. \emph{Stat. Probab. Lett.}, 2017, {\bf 128}: 8-13.

\bibitem{ChangLQ} Chang S, Li D, Qi Y. Limiting distributions of spectral radii for product of matrices from the spherical ensemble. \emph{J. Math. Anal. Appl.}, 2018, {\bf 461}: 1165-1176.

\bibitem{Chen2011} Chen L H Y, Goldstein L, Shao Q M. Normal approximation by Stein's method. Springer, 2011.

\bibitem{Crisanti12} Crisanti A, Paladin G, Vulpiani A. Products of random matrices in statistical physics. Springer, 2012.

\bibitem{DasGupta2008} DasGupta A. Asymptotic theory of statistics and probability. Springer, 2008.


\bibitem{Esseen1945} Esseen C G. Fourier analysis of distribution functions. A mathematical study of the Laplace-Gaussian law. \emph{Acta Math}., 1945, {\bf 77}: 1-125.
\bibitem{FurKesten} Furstenberg, H., Kesten, H. Products of random matrices. \emph{Ann. Math. Stat.}, 1960, {\bf 31}: 457-469. 

\bibitem{Forrester15} Forrester, P. J. Asymptotics of finite system Lyapunov exponents for some random matrix ensembles.  \emph{J. Phys. A: Math. Theore.}, 2015, {\bf 48} 215205. 

\bibitem{ForIpsen16} Forrester, P. J., Ipsen, I. R.  Real eigenvalue statistics for products of asymmetric real Gaussian matrices. \emph{Linear Algebra Appl.}, 2016, {\bf 510}: 259-290.

\bibitem{Gorin22} Gorin, V., Sun, Y. Gaussian fluctuations for products of random matrices. \emph{Amer. J. Math.}, 2022, {\bf  144} (2): 287-393.

\bibitem{GT} G$\ddot{o}$tze F,  Tikhomirov A. On the asymptotic spectrum of products of independent random matrices. arXiv:1012.2710, 2010.

\bibitem{Gotze2015} G$\ddot{o}$tze F, Kosters H, Tikhomirov A. Asymptotic spectra of matrix-valued functions of independent random matrices and free probability. \emph{Random Matrices: Th. Appl.}, 2015, {\bf 4}: 1550005.

\bibitem{Gradshteyn2000} Gradshteyn I S, Ryzhik I M, Jeffrey A. Table of integrals, series, and products. 6th ed. Academic Press, 2000.

\bibitem{HuMa25} Hu, X C, Ma, Y T. Convergence rate of extreme eigenvalue of Ginibre ensembles to Gumbel distribution. arXiv:2506.04560, 2025. 

\bibitem{Hwang} Hwang C R. A brief survey on the spectral radius and the spectral distribution of large random matrices with iid entries. \emph{Random Matrices  Appl.}. 1986: 145-152.

\bibitem{Ipsen} Ipsen, J. R. Products of independent Gaussian random matrices. Bielefeld University, 2015.
\bibitem{Ipsen2015} Ipsen, J. R. Lyapunov exponents for products of rectangular real, complex and quaternionic Ginibre matrices. \emph{J. Phys. A: Math. Theor.}, 2015, {\bf  48} (15), 155204.

\bibitem{IpsenKie14} Ipsen J R, Kieburg M. Weak commutation relations and eigenvalue statistics for products of rectangular random matrices. \emph{Phys. Rev. E}, 2014, {\bf 89}(3): 032106. 


\bibitem{JQ17} Jiang T, Qi Y. Spectral radii of large non-Hermitian random matrices. \emph{J. Theor. Probab.,} 2017, {\bf 30}(1): 326-364.

\bibitem{JQ19} Jiang T, Qi Y. Empirical distributions of eigenvalues of product ensembles. \emph{J. Theor. Probab.}, 2019, {\bf 32}: 353-394.

\bibitem{Kopel2020} Kopel, P., O'Rourke, S., Vu, V. Random matrix products: Universality and least singular values. \emph{ Ann. Probab.}, 2020, {\bf  48}(3): 1372-1410.

\bibitem{Kostlan1992} Kostlan E. On the spectra of Gaussian matrices. \emph{ Linear Algebra Appl.}, 1992, {\bf 162}: 385-388.

\bibitem{LW15} Liu D Z, Wang Y. Universality for products of random matrices I: Ginibre and truncated unitary cases. \emph{Int. Math. Res. Not.}, 2015, {\bf 16}: 6988-7015.

\bibitem{LWW23} Liu, D. Z., Wang, D., Wang, Y. Lyapunov exponent, universality and phase transition for products of random matrices. \emph{Comm. Math. Phys.}, 2023, {\bf  399}: 1811-1855. 

\bibitem{LW24} Liu D Z, Wang Y. Phase transitions for infinite products of large non-Hermitian random matrices. \emph{Ann. Inst. H. Poincar\'e Probab. Statist.}, 2024, {\bf 60}(4): 2813-2848.


\bibitem{MaMeng2025} Ma Y T, Meng X. Exact convergence rate of spectral radius of complex Ginibre to Gumbel distribution. arXiv: 2501.08039, 2025.

\bibitem{Ma2025c} Ma Y T, Meng X. How fast does spectral radius of truncated circular unitary ensemble converge? arXiv: 2506.16967, 2025.

\bibitem{MaTian} Ma Y T, Tian B. Revisit on the convergence rate of normal extremes. arXiv:2507.09496, 2025.


\bibitem{MaWang25} Ma Y T, Wang S. Optimal $W_1$ and Berry-Esseen bound between the spectral radius of large chiral non-Hermitian random matrices and Gumbel. arXiv:2501.08661, 2025.

\bibitem{Mehta} Mehta M L. Random matrices and the statistical theory of energy levels[M]. Academic Press, 1967.

\bibitem{RS11} O'Rourke S, Soshnikov A. Products of independent non-Hermitian random matrices. \emph{ Electron. J. Probab.}, 2011, {\bf 16}(81): 2219-2245.

\bibitem{RRSV15} O'Rourke S, Renfrew D, Soshnikov A, Vu V. (2015) Products of independent elliptic random matrices. \emph{ J. Stat. Phys.}, 2015, {\bf 160}(1): 89-119.

\bibitem{Petrov1975} Petrov V V. Sums of independent random variables. Springer, 1975.

\bibitem{Reiss1989} Reiss R D. Approximate distributions of order statistics. Springer, 1989.

\bibitem{Taobook} Tao T. Topics in random matrix theory. American Mathematical Society, 2012.

\bibitem{Tao and Vu} Tao T, Vu V. From the Littlewood-Offord problem to the circular law: universality of the spectral distribution of random matrices. \emph{ Bull. Amer. Math. Soc.}, 2009, {\bf 46}(3): 377-396.

\bibitem{Tik} Tikhomirov A N. On the asymptotics of the spectrum of the product of two rectangular random matrices. \emph{ Sib. Math. J.}, 2011, {\bf 52}(4): 747-762.

\bibitem{Zeng16} Zeng X. Eigenvalues distribution for products of independent spherical ensembles. \emph{ J. Phys. A: Math. Theor.}, 2016, {\bf 49}: 235201.

\bibitem{Zeng17} Zeng X. Limiting empirical distribution for eigenvalues of products of random rectangular matrices. \emph{Stat. Probab. Lett.}, 2017, {\bf 126}: 33-40.



\end{thebibliography}
\end{document}